\newtheorem{theorem}{Theorem}[section]
\newtheorem{lem}{Lemma}[section]
\newtheorem{defn}{Definition}[section]
\begin{document}
\title{A priori interior estimates for special Lagrangian curvature equations }
\author{Guohuan Qiu }
\author{Xingchen Zhou}
\date{}
\address{Institute of Mathematics, Academy of Mathematics and Systems Science, Chinese Academy of Sciences, No.55 Zhongguancun East Road, 100190, Beijing, China}
\email{qiugh@amss.ac.cn}

\address{Department of Mathematical Sciences, Tsinghua University, Beijing, China}
\email{zxc3zxc4zxc5@stu.xjtu.edu.cn}

\maketitle

\begin{abstract}
We establish a priori interior curvature estimates for the special Lagrangian curvature equations in both the critical phase and convex case. Additionally, we prove a priori interior gradient estimates for any constant phases.
\end{abstract}

%% \tableofcontents %% Just for papers exceeding 50 pages.

\section{Introduction}\label{sec:intro}

The special Lagrangian equation 
\begin{equation}
 \sum_{i=1}^{n}\arctan\lambda_{i} (D^2 u)=\Theta  \label{SLE}
\end{equation}
was introduced by Harvey-Lawson \cite{harvey1982calibrated} back in 1982. Its solutions $u$ were shown to have the property that the graph $(x,\nabla u) \in \mathbb{R}^n \times \mathbb{R}^n $ is a Lagrangian submanifold which is absolutely volume-minimizing. In this paper, we are going to develop some analytical properties of the special Lagrangian curvature equation \eqref{eq:specialLag} which has similar structure as \eqref{SLE}. 

Given a hypersurface $M \in \mathbb{R}^n$, we denote its position vector by $X$ and  $\nu$ an outer normal vector. At any point $X \in M$, the  principal curvature $\kappa=(\kappa_{1},\kappa_{2},\cdots,\kappa_{n})$ satisfy an equation 
\begin{equation}
\sum_{i=1}^{n}\arctan\kappa_{i}=\Theta.
\label{eq:specialLag}
\end{equation}
Equation \eqref{eq:specialLag} was first studied by Graham Smith in \cite{smith2013special}, where he referred to it as the "special Lagrangian curvature" equations. He introduced the notion of special Lagrangian curvature as an alternative higher-dimensional generalization of two or three-dimensional scalar curvature. He also demonstrated how this curvature is related to the canonical special Legendrian structure of spherical subbundles of the tangent bundle of the ambient manifold. He then established a compactness result if $\Theta \in [\frac{(n-1)\pi}{2}, \frac{n\pi}{2})$.

Harvey-Lawson, in \cite{harvey2021pseudoconvexity}, investigated the existence of so-called "special Lagrangian potential equations"  given by
\begin{equation}
    \sum^m_{i=1} \arctan(\lambda^g_i(A)) =\Theta \quad for \quad A\in Sym^2(\mathbb{R}^n), \label{SLPE}
\end{equation}
 where $g : Sym^2(\mathbb{R}^n) \rightarrow \mathbb{R}$ is a homogeneous polynomial of degree $m$. As mentioned in their paper, besides the special Lagrangian equation \eqref{eq:specialLag}, one important example of \eqref{SLPE} is the deformed Hermitian-Yang-Mills equation, which appears in mirror symmetry. Another interesting example of \eqref{SLPE} is the special Lagrangian curvature equation \eqref{eq:specialLag} from hypersurface geometry.
 
 Moreover, our motivation for studying the special Lagrangian curvature equation comes from an observation made in Section \ref{OTinD2} that in dimension two, this equation \eqref{eq:specialLag} is equivalent to the equation in the optimal transportation problem with a "relative heat cost" function, as discussed in Brenier's paper \cite{BrenierLecNote01}.
 
Several years ago, Warren-Yuan \cite{WY10} and Wang-Yuan \cite{WY11} successfully derived a priori interior Hessian and gradient estimates for the special Lagrangian equations \eqref{SLE} for both critical and supercritical phases. Chen-Warren-Yuan also obtained results for the convex case in \cite{chen2009priori}. In this work, we aim to extend their results by proving the following theorem, which generalizes their findings to the special Lagrangian curvature equations \eqref{eq:specialLag}.
 
\begin{theorem}
\label{thm:SpecialLag} Consider a smooth hypersurface $M$ in $\mathbb{R}^{n+1}$, which can be parametrized as a graph over $B_{10} \subset \mathbb{R}^{n}$ for $n \geq 3$. The graph function is assumed to be a solution to the equation (\ref{eq:specialLag}). If $\Theta = \frac{(n-2)\pi}{2}$ or $M$ is convex, then we have
\begin{equation}
\sup_{x\in B_{\frac{1}{2}}}|\kappa(x)|\leq C, \label{InteriorCurevature}
\end{equation}
where $C$ depends only on $||M||_{L^\infty(B_{10})}$ and $n$. 
\end{theorem}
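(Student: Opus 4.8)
The plan is to transplant the Warren--Yuan \cite{WY10} and Chen--Warren--Yuan \cite{chen2009priori} arguments for the special Lagrangian equation \eqref{SLE} onto the hypersurface $M$ and its Legendrian (Gauss) lift.

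\emph{Step 1 (reduction to a bounded gradient).} I would first use the interior gradient estimate for \eqref{eq:specialLag} valid for every constant phase --- established separately in this paper, in the spirit of Wang--Yuan \cite{WY11} --- to get $\|Du\|_{L^\infty(B_9)}\le C(n,\|M\|_{L^\infty(B_{10})})$. Then the induced metric on $M$ is uniformly comparable to the Euclidean metric over $B_8$, the part of $M$ over $B_8$ has area $\le C(n,\|M\|_{L^\infty})$, and the Gauss map $\nu$ takes values in a fixed compact subset of $S^n$. It now suffices to bound $|A|^2=\sum_i\kappa_i^2$ over $B_{1/2}$, for which I would instead bound the slope function
\[
 b:=\tfrac12\log\det\!\big(I+A^2\big)=\tfrac12\sum_i\log(1+\kappa_i^2),
\]
$A$ being the shape operator. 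This $b$ is the logarithm of the volume density of the Legendrian lift $\tilde M:=\{(X,\nu(X)):X\in M\}$ into the unit sphere bundle $\mathbb{R}^{n+1}\times S^n$, and it satisfies $e^{2b}=\prod_i(1+\kappa_i^2)\ge 1+|A|^2$.

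\emph{Step 2 (the key differential inequality --- the main obstacle).} Differentiating \eqref{eq:specialLag} once and twice on $M$ and inserting the Codazzi and Gauss equations produces an equation for $b$ with respect to the Laplace--Beltrami operator $\Delta_{\tilde M}$ of the lift (equivalently, up to lower order terms, the linearized operator $L=\sum_i(1+\kappa_i^2)^{-1}\nabla^2_{ii}$ in a frame diagonalizing $A$). The claim to prove is that, when $\Theta=\frac{(n-2)\pi}{2}$ or $M$ is convex,
\[
 \Delta_{\tilde M}b\;\ge\;\varepsilon(n)\,|\nabla_{\tilde M}b|^2-C(n,\|M\|_{L^\infty}),
\]
so that $b$ is almost subharmonic on $\tilde M$. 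The delicate term is $-\sum_k F^{pq,rs}h_{pq,k}h_{rs,k}$, with $F(A)=\sum_i\arctan\kappa_i$: in the critical (and supercritical) phase its unfavourable part is dominated via the concavity of $F$ on the relevant level set and the sharp trigonometric identities for $\sum\arctan$ in the manner of Warren--Yuan \cite{WY10} --- for $n=3$ the equation is literally $\sigma_2(\kappa)=1$ --- while in the convex case one uses the Chen--Warren--Yuan rearrangement \cite{chen2009priori} exploiting $\kappa_i>0$. Once the gradient is controlled, the ambient--curvature and Gauss--equation contributions are bounded and absorbed into the $-C$. I expect this step to be the main difficulty.

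\emph{Step 3 (Moser iteration on the minimal lift).} The Legendrian lift $\tilde M$ is minimal in $\mathbb{R}^{n+1}\times S^n$ --- the hypersurface counterpart of the minimality of special Lagrangian graphs, underlying Smith's terminology \cite{smith2013special} --- so its mean curvature vanishes and the Michael--Simon--Sobolev inequality on $\tilde M$ (with only the controlled correction from the bounded--curvature factor $S^n$) has no mean--curvature term. Testing the inequality of Step 2 against $\eta^2\max(b-k,0)^p$, integrating over $\tilde M$, and iterating in the style of De Giorgi--Nash--Moser then gives $\sup_{\tilde M\cap\mathcal U}b\le C(n,\|M\|_{L^\infty})$ on a neighbourhood $\mathcal U$ of the part of $\tilde M$ over $B_{1/2}$, once the area $\int V$ over a slightly larger ball is bounded --- which it is, with a bound $C(n,\|M\|_{L^\infty})$, in the convex case from quermassintegral bounds for convex bodies of controlled diameter, and in general from the monotonicity formula on the minimal submanifold $\tilde M$. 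To keep the constant dependent only on $\|M\|_{L^\infty}$ and $n$ (and not on $\|Du\|_\infty$ or the size of the ball) the cutoff $\eta$ must be taken ``quadratic'' --- built from the position vector $X$ and the height $\langle X,e_{n+1}\rangle$ as in Warren--Yuan --- so that $|\nabla_{\tilde M}\eta|^2\lesssim\eta$ and $\Delta_{\tilde M}\eta\gtrsim-C$ with constants governed by $\|M\|_{L^\infty}$ alone.

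\emph{Step 4 (conclusion).} Combining the above, $b\le C(n,\|M\|_{L^\infty})$ on $M$ over $B_{1/2}$, hence $|A|^2\le e^{2b}\le C$, which is \eqref{InteriorCurevature}. The convex and critical--phase cases follow the identical scheme, diverging only in the proof of the differential inequality of Step 2; for $n=2$ the critical equation is the minimal surface equation and the estimate is classical, which is why $n\ge3$ is assumed.
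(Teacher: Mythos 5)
Your proposal reproduces the overall architecture of the paper's proof: lift $M$ to $\Sigma = (X,\nu)$ in $\mathbb{R}^{n+1}\times\mathbb{S}^{n}$, prove a strong Jacobi inequality for a slope-type quantity $b$, establish an area bound, and run a mean-value/iteration argument on the lift. Your Step 1 (reduce $\|M\|_{C^1}$ to $\|M\|_{L^\infty}$ via the gradient estimate of Section \ref{gradientestimateSec}) is also what the paper implicitly does. However, three of your key claims diverge from the paper in ways that matter.

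\textbf{Minimality of the lift.} You assert the Legendrian lift $\tilde M$ is minimal in $\mathbb{R}^{n+1}\times\mathbb{S}^{n}$. It is not: the paper computes explicitly in Section 4 that the mean curvature vector of $\Sigma$ satisfies only a bound of the form
$\lvert\mathscr{H}\rvert \le (n+2)\bigl(\sum_i (\kappa_i^2+\kappa_i^4)/(1+\kappa_i^2)^2\bigr)^{1/2}\le C,$
which is bounded but does not vanish (e.g., the $\nu_1$-component carries $G^{ij}h_{ij}$, which is generically nonzero even in the critical phase). Michael--Simon still applies with a bounded mean-curvature correction, and the paper's Theorem \ref{thm-meanvalue} handles exactly that; but your reliance on literal minimality, in particular to obtain the area bound ``from the monotonicity formula on the minimal submanifold $\tilde M$,'' is not justified. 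The paper obtains the area bound (Lemma \ref{lem:areaEstimate}) instead from the divergence-free structure of the Newton transformation tensors and integration by parts, which uses only $\kappa\in\Gamma_{n-1}$.

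\textbf{Choice of the subharmonic quantity $b$.} You take $b = \tfrac12\log\det(I+A^2)=\log V$, whereas the paper takes $b = \log(H+J)$ with $J=J(n)\ge 4n^3$. The Jacobi inequality of Theorem \ref{lem3} is proved specifically for $\log(H+J)$ via the commutation formula for $H$, the diagonalization Lemma \ref{Diag}, and the trigonometric identities of Lemmas \ref{kappa_iG_ii} and \ref{inverseKappa}, which control the curvature terms $|A|^2 G^{ij}h_{ij}-HG^{ij}h_{ik}\delta^{kl}h_{lj}$. It is not clear that the analogous computation for $\log V$ closes with an $\epsilon(n)|\nabla_G b|^2$ term; in any case it is not the computation in the paper, and the constant $J$ plays an essential role in the Case 2 argument.

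\textbf{The missing step: bounding $\int b\,V\,dx$.} Whether you package it as a mean value inequality or De Giorgi--Nash--Moser, what comes out of the subharmonicity plus Michael--Simon is $\sup b\le C\int b\,V\,dx$, i.e., a weighted $L^1$ bound on $b$. Your proposal stops at ``once the area $\int V$ is bounded,'' but since $b\approx\log V$, bounding $\int V\,dx$ does not bound $\int (\log V)\,V\,dx$. This is the real work in Section 5 of the paper: $bV$ is decomposed into $b\sigma_k$, and each $\int b\sigma_k\,dx$ is bounded by iterated integration by parts using the divergence-free Newton tensors, the Caccioppoli estimate $\int\phi^2G^{ij}b_ib_j\,d\Sigma\le C$ (this is where the strong form of the Jacobi inequality with the $\epsilon|\nabla b|^2$ term is actually used), and --- for the top term $\int b\sigma_n\,dx$ --- a delicate case analysis ($|\cos\theta|\ge\sqrt3/2$ versus $|\sin\theta|\ge1/2$) together with the reformulated equation of Lemma \ref{lem_theta}. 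Your sketch does not account for this, and without it the proof is incomplete.
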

Motivated by Weyl isometric embedding problem, E. Heinz in \cite{heinz1959elliptic} first established these interior curvature estimates \eqref{InteriorCurevature} to the scalar curvature equation in $\mathbb{R}^2$. The special Lagrangian curvature equations are fully nonlinear elliptic partial differential equations. Heinz's interior $C^{2}$ estimate is a highly non-trivial result for fully nonlinear PDEs. This is because Pogorelov \cite{pogorelov1978multidimensional} constructed non-$C^2$ convex solutions to the equation $\det D^{2}u=1$ in $B_{r}\subseteq\mathbb{R}^{n}$
when $n\geq3$. Pogorelov's counter-examples were extended by Urbas in \cite{urbas1990existence} to $\sigma_k$ hessian equation when $n\geq k \geq 3$.
When $k=2$, a pioneer work was done by Warren-Yuan \cite{warren2009hessian} where they obtained $C^{2}$ interior estimate for the equation
\begin{equation*}
\sigma_{2}(\nabla^{2}u)=1,\begin{array}{ccc}
 & x\in B_{1}\subset\mathbb{R}^{3}.\end{array}
\end{equation*}
In order to show an interior $C^2$ estimate for $\sigma_2 (\nabla^{2}u)=f$ with a smooth $f$  in dimension three, the first-named author established a strong trace Jacobi inequality and a doubling lemma in \cite{qiu2017interior}. The trace Jacobi inequality and the doubling lemma are crucial components in the recent resolution of the four-dimensional case by Shankar-Yuan in \cite{shankar2023hessian}. While the paper \cite{mcgonagle2019hessian,guan2019interior,shankar2020hessian} have established various results under specific convexity conditions, the higher-dimensional case ($n\geq 5$) of $2$-Hessian equations remains an open question.

In hypersurface geometry, numerous curvature equations are formulated as fully nonlinear PDEs. Sheng-Urbas-Wang in \cite{ShengUrbasWang04} proved interior curvature estimates for a class of curvature equations given by $\frac{\sigma_k(\kappa)}{\sigma_{k-1}(\kappa)} = f$. The first-named author in \cite{qiu2019interior} established interior curvature estimates for the scalar curvature equation, corresponding to the case $\Theta=\frac{\pi}{2}$ in dimension three. For higher dimensions scalar curvature equations, Guan and the first-named author in \cite{guan2019interior} obtained estimate \eqref{InteriorCurevature}  with certain convexity constraints. Moreover, we succeeded in establishing a higher-dimensional version of Heinz's compactness result for isometrically immersed hypersurfaces in $\mathbb{R}^n$ with positive scalar curvature in the same paper \cite{guan2019interior}.

An interesting fact is that these special Lagrangian curvature equations \eqref{eq:specialLag} also enjoy a priori interior curvature estimates,  similar to the special Lagrangian equations \eqref{SLE}.
A key observation is that the graph $\Sigma=(X,\nu)$
where $X$ satisfies this equation (\ref{eq:specialLag}) can be
viewed as a submanifold in $(\mathbb{R}^{n+1}\times\mathbb{S}^{n},\sum\limits _{i=1}^{n+1}dx_{i}^{2}+i_{\mathbb{S}^{n}}^{*}(\sum\limits _{i=1}^{n+1}dy_{i}^{2}))$
with bounded mean curvature.  These submanifold with bounded mean curvature allows us to use  Michael-Simon's mean value inequality in \cite{michael1973sobolev}. Another important ingredient  in this problem is establishing a strong Jacobi inequality. We establish a strong trace Jacobi inequality for the special Lagrangian curvature equations in critical and convex cases. Finally, we apply a modified integral method of Warren-Yuan \cite{warren2009hessian,WY10} and Wang-Yuan  \cite{WY11} without relying on Sobolev inequality. 

The general supercritical case is unclear to us. Although all the other parts work fine in the supercritical case, we can only prove the Jacobi inequality on the critical or convex case in our Theorem \ref{lem3}. This is because the curvature term may disrupt the Jacobi inequality, which is crucial for the interior estimate. In contrast to the special Lagrangian equation, we encounter a term like $\frac{|A|^2}{H} G^{ij}h_{ij}$ for this curvature equation. However, $G^{ij}h_{ij}$ may be negative in the supercritical case when $\kappa_1 \geq \cdots \geq \kappa_{n-1} \rightarrow + \infty$ and $\kappa_n \approx -1$. Therefore, this troublesome term is even more challenging than the terms from the cutoff function. We note that a similar bad term also appears in Guan-Ren-Wang's paper \cite{guan2015global}, where they derived a global $C^2$ a priori estimate for convex solutions of curvature equations with a prescribed function that depends on $\nu$.
Instead, we conjecture that there may be a $C^{1,\alpha}$ singular solution to the special Lagrangian curvature equation for supercritical case, particular when $0<\Theta<\frac{\pi}{2}$, in $\mathbb{R}^2$. Because the equation can also be derived from an optimal transportation problem, as shown Lemma \ref{optimalTransPro} in section \ref{OTinD2}, but it violates the famous Ma-Trudinger-Wang condition when $0<\Theta<\frac{\pi}{2}$. According to Ma,Trudinger,Wang's paper \cite{ma2005regularity} and  Loeper's paper \cite{Loeper09Acta}, this condition is necessary and sufficient to guarantee that the equation has interior second-order estimates for general optimal transportation problems. Note that we can not use Loeper's general counterexample for this particular curvature equation. 

In section \ref{gradientestimateSec}, we prove the interior gradient estimate for all constant phases. 
\begin{theorem}
Suppose $M$ is a smooth graph over $B_{1}\subset\mathbb{R}^{n}$
and it is a solution of equation (\ref{eq:specialLag}). Then we have
\begin{equation*}
|Du(0)|\leq C(n)\mathop{osc}\limits_{B_1(0)}u.
\end{equation*}
\end{theorem}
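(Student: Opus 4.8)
The plan is to prove the equivalent bound \(W(0)\le C(n)M_0\), where \(W:=\sqrt{1+|Du|^{2}}=\langle\nu,e_{n+1}\rangle^{-1}\) and \(M_0:=\operatorname{osc}_{B_1}u\), by a maximum–principle argument on an auxiliary function measuring the tilt of the graph. After subtracting a constant we may assume \(0\le u\le M_0\) on \(B_1\) (the statement being trivial if \(M_0=0\)). I would work with the test function
\[
\Phi\;=\;(1-|x|^{2})^{2}\,W\,e^{\,\beta u/M_0},
\]
with \(\beta=\beta(n)\) a large dimensional constant, and let \(x_0\in\overline{B_1}\) be a maximum point of \(\Phi\); since \(\Phi\equiv0\) on \(\partial B_1\) we have \(x_0\in B_1\), and since \(u(0)\ge0\) we get \(W(0)\le\Phi(0)\le\Phi(x_0)\). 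Thus it suffices to bound \(\Phi(x_0)\le C(n)M_0\).

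\textbf{Key identities.} At \(x_0\) I would use \(D\log\Phi=0\) and \(\mathcal L\log\Phi\le0\), where \(\mathcal L=\sum_i f^{i}\nabla_{ii}\) is the linearized operator of \eqref{eq:specialLag} written in a frame on \(M\) diagonalizing the second fundamental form, with \(f^{i}:=(1+\kappa_i^{2})^{-1}>0\), and \(\nabla\) the connection of the induced metric. Write \(\tau_i:=\langle e_i,e_{n+1}\rangle\), so that \(\nabla^{M}u=\sum_i\tau_ie_i\) and \(\sum_i\tau_i^{2}=1-W^{-2}\). Differentiating the position vector and the Gauss map, and using that \(\Theta\) is constant (hence \(\sum_j f^{j}\nabla_k h_{jj}=0\) for every \(k\)), one gets
\[
\mathcal L\,\langle X,e_{n+1}\rangle=\Big(\textstyle\sum_i f^{i}\kappa_i\Big)\langle\nu,e_{n+1}\rangle,\qquad
\nabla_i\log W=-\kappa_i\tau_iW,
\]
\[
\mathcal L\,\log W=\sum_i f^{i}\kappa_i^{2}\big(W^{2}\tau_i^{2}-1\big)\;-\;W\sum_i f^{i}(\nabla_i h_{ii})\,\tau_i,
\]
together with \(\big|\sum_i f^{i}\kappa_i\big|\le n/2\), \(\sum_i f^{i}\tau_i^{2}\le1\) and \(\big|\mathcal L(1-|x|^{2})\big|\le C(n)(1+M_0)\). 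The structurally important point is that, \emph{unlike} the curvature estimate \eqref{InteriorCurevature}, no term of the form \(\tfrac{|A|^{2}}{H}G^{ij}h_{ij}\) enters \(\mathcal L\log W\); this is exactly why the gradient estimate persists for every constant phase even though the Jacobi inequality behind \eqref{InteriorCurevature} does not.

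\textbf{Main obstacle and closing.} The one genuinely dangerous term is the first–order contribution \(W\sum_i f^{i}(\nabla_i h_{ii})\tau_i\), which involves a derivative of the second fundamental form. I would tame it by first invoking \(0=\nabla_{\nabla^{M}u}\Theta=\sum_{i,j}f^{j}(\nabla_i h_{jj})\tau_i\) together with the Codazzi equations to re–express its diagonal part through off–diagonal derivatives of \(A\), then converting first derivatives of \(W\) into bounded quantities via \(\nabla_i\log W=-\nabla_i\log\!\big[(1-|x|^{2})^{2}e^{\beta u/M_0}\big]\) (whose size is controlled by \((1-|x_0|^{2})^{-1}\) and \(\beta M_0^{-1}\)); a Cauchy–Schwarz step — if necessary assisted by adding a \emph{small} multiple \(\varepsilon\log\det(I+A^{2})\) to \(\Phi\), whose linearization supplies \(\varepsilon\,|\nabla A|_f^{2}\) and whose own troublesome term carries only the weight \(\varepsilon\) — then absorbs it into the good negative quantities \(-\sum_i f^{i}\kappa_i^{2}\), \(-|\nabla(1-|x|^{2})|_f^{2}/(1-|x|^{2})^{2}\), and the contribution of \(\mathcal L(e^{\beta u/M_0})\) after \(\beta=\beta(n)\) is chosen large; this last term is where the linear dependence on \(M_0\) is generated. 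I expect this absorption — reconciling the \(\nabla A\) term in a phase–independent way — to be the main difficulty. The degeneracy of \(\mathcal L\) (its smallest eigenvalue \(\min_i f^{i}\) may be tiny) is not an issue, since every term to be controlled already carries the weights \(f^{i}\) and the estimate is therefore insensitive to the size of the curvature. Combining the inequalities at \(x_0\) gives \(\Phi(x_0)\le C(n)M_0\), hence \(W(0)\le C(n)M_0\), which is the claim. Conceptually this matches the geometric picture of the Introduction: on \(\Sigma=(X,\nu)\subset\mathbb R^{n+1}\times\mathbb S^{n}\) one has \(\Delta_{\Sigma}\langle\nu,e_{n+1}\rangle=-\big(\sum_i\tfrac{\kappa_i^{2}}{1+\kappa_i^{2}}\big)\langle\nu,e_{n+1}\rangle+O_n(1)\) from the bounded mean curvature of \(\Sigma\), while \(\int_{M}\langle\nu,e_{n+1}\rangle\,d\mu_M=|B_1|\); together with the Michael–Simon inequality this is an alternative path to the lower bound on \(\langle\nu,e_{n+1}\rangle(0)\).
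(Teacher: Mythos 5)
Your overall framework (a maximum--principle argument for a product test function built from the slope $W$, the height $u$ and a cutoff, evaluated with the linearized operator $f^i=(1+\kappa_i^2)^{-1}$, which is the paper's $G^{ij}$ up to the factor $V$) is the same as the paper's, and several ingredients are right. But note first that your ``one genuinely dangerous term'' is not dangerous at all: by Codazzi and the once--differentiated equation, $\sum_{i,k}f^i h_{iki}\tau_k=\sum_k\tau_k\sum_i f^i h_{iik}=0$, so the third--order contribution to $\mathcal L\log W$ vanishes identically and the correct identity is $\mathcal L\log W=\sum_i f^i\kappa_i^2\bigl(1+W^2\tau_i^2\bigr)\ge 0$ (not $(W^2\tau_i^2-1)$). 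The machinery you propose to tame that term (adding $\varepsilon\log\det(I+A^2)$, absorbing $|\nabla A|_f^2$ by Cauchy--Schwarz) is therefore addressing a non--problem.

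The genuine gap lies elsewhere: the test function $\Phi=(1-|x|^2)^2\,W\,e^{\beta u/M_0}$ produces no coercive term at its maximum. The first--order condition $\nabla_i\log\Phi=0$ gives $\kappa_i\tau_i W=-2\nabla_i\log(1-|x|^2)-\beta\tau_i/M_0$, so each $\kappa_i\tau_iW$ is bounded by $C(1-|x_0|^2)^{-1}+\beta/M_0$; consequently the only candidate good term $W^2\sum_i f^i\kappa_i^2\tau_i^2=\sum_i f^i(\kappa_i\tau_iW)^2$ is \emph{bounded}, while $\sum_i f^i\kappa_i^2=\sum_i\kappa_i^2/(1+\kappa_i^2)\le n$ is always bounded for this operator (unlike the mean--curvature case, where $|A|^2$ together with $\Delta_M u=H/W$ and the $e^{\beta u}$ factor supplies coercivity). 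Hence $0\ge\mathcal L\log\Phi$ collapses to ``bounded positive $\ge -C$'' and yields no bound on $W$. This is precisely why the paper takes $P=2\log\eta+\log\log W+\langle X,E_{n+1}\rangle$ rather than $\log\eta$ plus $\log W$: the extra logarithm rescales the critical--point identity to $h_{nn}\approx\langle\nu,E_{n+1}\rangle\log W=-\log W/W$ instead of $O(1/W)$, so the squared--gradient term $G^{nn}h_{nn}^2\langle e_n,E_{n+1}\rangle^2/(\langle\nu,E_{n+1}\rangle^2\log W)\ge c\,G^{nn}\log W\ge c\log W$ becomes coercive (here $G^{nn}\ge c(n)$ because $h_{nn},h_{in}\to0$), dominates the bounded errors, and gives $\eta^2\log W\le C$. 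Without this logarithmic rescaling, or an equivalent device, your argument does not close.
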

We remark that for the special Lagrangian equation \eqref{SLE}, especially for sub-critical phases, the interior gradient estimate remains open, as mentioned in \cite{mooney2022homogeneous}.
Due to the existence of singular solutions for the special Lagrangian equation \eqref{SLE} in subcritical phases ($|\Theta|<\frac{(n-2)\pi}{2}$), as demonstrated by Nadirashvili-Vladu \cite{nadirashvili2010singular}, Wang-Yuan \cite{wang2013singular}, and Mooney-Savin \cite{mooney2023non} for non $C^1$ examples, we do not expect the special Lagrangian curvature equation to have interior curvature estimates when $|\Theta|<\frac{(n-2)\pi}{2}$. 
\\

\section{Preliminary Lemmas}

We first introduce some definitions and notations. 
\begin{defn}
\label{defsigma}For $\kappa=(\kappa_{1},\cdots,\kappa_{n})\in\mathbb{R}^{n}$,
the $k$-th elementary symmetric function $\sigma_{k}(\kappa)$
is defined as 
\[
\sigma_{k}(\kappa):=\sum\kappa_{i_{1}}\cdots\kappa_{i_{k}},
\]
 where the sum is taken over for all strictly increasing sequences $i_{1},\cdots,i_{k}$
of the indices chosen from the set $\{1,\cdots,n\}$. And we denote
$\sigma_{0}=1$. The definition can be extended to symmetric matrices
where $\kappa=(\kappa_{1},\cdots,\kappa_{n})$ are the corresponding
eigenvalues of the symmetric matrices. 
\end{defn}
For example, in $\mathbb{R}^{3}$ 
\[
\sigma_{2}(\kappa):=\kappa_{1}\kappa_{2}+\kappa_{1}\kappa_{3}+\kappa_{2}\kappa_{3}.
\]

\begin{defn}
For $1\leq k\leq n$, let $\Gamma_{k}$ be a cone in $\mathbb{R}^{n}$
determined by 
\[
\Gamma_{k}=\{\kappa\in\mathbb{R}^{n}:\sigma_{1}(\kappa)>0,\cdots,\sigma_{k}(\kappa)>0\}.
\]
\end{defn}

\begin{defn}
A $C^{2}$ surface $M$ is called admissible if at every point $X\in M$,
its principal curvature satisfies $(\kappa_{1},\kappa_{2},\cdots,\kappa_{n})\in\Gamma_{n-1}$.
% (\ref{eq:strict}), (\ref{eq:kappan-1}) (\ref{eq:Gamman-1}). 
\end{defn}

We remark that $M$ is admissible when $\Theta \ge \frac{(n-2)\pi}{2}$ or $M$ convex, according to Wang-Yuan \cite{WY11}, Lemma 2.1. Thus for a curvature equation (\ref{eq:specialLag}), we may assume that
$M$ is admissible without loss of generality.

For any symmetric matrix $h_{ij}$, it follows that $\sigma_{k}^{ij}:=\frac{\partial\sigma_{k}(\kappa(h_{ij}))}{\partial h_{ij}}$
is positive definite if $\kappa(h_{ij})\in\Gamma_{k}$, for each $1\le k\le n$. 

We also have an important algebraic  property.
\begin{lem}
 Suppose the ordered real numbers $\kappa_{1}\geq\kappa_{2}\geq\cdots\geq\kappa_{n}$ with $\kappa \in \Gamma_{n-1}$,
then 
%\begin{equation}
%\begin{array}{ccc}
%\kappa_{1}\geq\kappa_{2}\geq\cdots\geq\kappa_{n-1}>0 & and & \kappa_{n-%1}\geq\lvert\kappa_{n}\rvert\end{array},\label{eq:strict}
%\end{equation}
\begin{equation}
\kappa_{i}+(n-i)\kappa_{n}\geq0,\label{eq:kappan-1}
\end{equation}
for each $1\le i\le n-1$.
\end{lem}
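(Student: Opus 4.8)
The plan is to split on the sign of the smallest curvature. If $\kappa_n\ge 0$ then $\kappa_i\ge\kappa_n\ge 0$ for every $i$, so \eqref{eq:kappan-1} is trivial; the real content is the case $\kappa_n<0$. Here I would first prove the positivity statement $\kappa_1,\dots,\kappa_{n-1}>0$, and then read off the quantitative bound from $\sigma_{n-1}(\kappa)>0$. Once positivity is in hand, group the terms of $\sigma_{n-1}$ according to whether they contain $\kappa_n$:
\[
\sigma_{n-1}(\kappa)=\Big(\prod_{l=1}^{n-1}\kappa_l\Big)\Big(1+\kappa_n\sum_{l=1}^{n-1}\frac{1}{\kappa_l}\Big).
\]
Since the leading factor is positive and $\sigma_{n-1}(\kappa)>0$, while $\kappa_n<0$, this forces $\sum_{l=1}^{n-1}\kappa_l^{-1}<(-\kappa_n)^{-1}$. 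Because $0<\kappa_{n-1}\le\cdots\le\kappa_i$ for the relevant indices, the tail estimate $\sum_{l=i}^{n-1}\kappa_l^{-1}\ge (n-i)\kappa_i^{-1}$ gives $(n-i)\kappa_i^{-1}<(-\kappa_n)^{-1}$, which rearranges exactly to $\kappa_i+(n-i)\kappa_n>0$. This last step is a one‑variable monotonicity computation and should be routine.

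The heart of the matter is thus the claim that a decreasingly ordered vector in $\Gamma_{n-1}$ can have at most its last entry non‑positive. I would establish it through the following truncation step: if $\lambda_1\ge\cdots\ge\lambda_n$ lies in $\Gamma_k$ with $1\le k\le n-1$, then $(\lambda_1,\dots,\lambda_{n-1})$ lies in $\Gamma_k\subset\mathbb{R}^{n-1}$. This follows by induction on $j$ from the standard recursion $\sigma_j(\lambda_1,\dots,\lambda_{n-1})=\sigma_j(\lambda)-\lambda_n\,\sigma_{j-1}(\lambda_1,\dots,\lambda_{n-1})$, $\sigma_0=1$: when $\lambda_n\le 0$ the subtracted term is $\le 0$, hence $\sigma_j(\lambda_1,\dots,\lambda_{n-1})\ge\sigma_j(\lambda)>0$ for $j\le k$; when $\lambda_n>0$ every entry is positive and there is nothing to prove. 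Applying this with $\lambda=\kappa$ and $k=n-1$ places $(\kappa_1,\dots,\kappa_{n-1})$ in $\Gamma_{n-1}\subset\mathbb{R}^{n-1}$. But for this cone the order equals the dimension, and $\Gamma_m\subset\mathbb{R}^m$ is precisely the open positive cone: if $\sigma_1(\mu)>0,\dots,\sigma_m(\mu)>0$ then the polynomial $\prod_{l=1}^{m}(t+\mu_l)=\sum_{j=0}^{m}\sigma_j(\mu)t^{m-j}$ has all coefficients positive, hence is strictly positive on $[0,\infty)$, so every root $-\mu_l$ is negative. Therefore $\kappa_1,\dots,\kappa_{n-1}>0$.

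The main obstacle is exactly this positivity step: the bare inequality $\sigma_{n-1}(\kappa)>0$ alone is not enough, and one genuinely needs the full cone condition $\kappa\in\Gamma_{n-1}$; the truncation‑plus‑sign‑of‑coefficients argument above is the cleanest way I see to use it. Everything else — the trivial case $\kappa_n\ge 0$, the factorization of $\sigma_{n-1}$, and the monotone sum estimate — is mechanical. I would also record the two sanity checks: at the extreme index $i=n-1$ the claim reads $\kappa_{n-1}+\kappa_n\ge 0$ and comes straight from $\kappa_{n-1}^{-1}<(-\kappa_n)^{-1}$, and in the interior of the cone the argument in fact delivers the strict inequality.
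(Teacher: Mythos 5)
Your proof is correct and follows essentially the same route as the paper: both arguments hinge on $\sigma_{n-1}(\kappa)>0$ divided by the product of the positive entries $\kappa_1,\dots,\kappa_{n-1}$, followed by the monotonicity bound $\kappa_l^{-1}\ge\kappa_i^{-1}$ for $l\ge i$. The only difference is that you supply a full proof of the positivity $\kappa_1,\dots,\kappa_{n-1}>0$ (via truncation into $\Gamma_{n-1}\subset\mathbb{R}^{n-1}$ and the sign of the coefficients of $\prod_l(t+\kappa_l)$), whereas the paper simply asserts ``Notice that $\kappa_{n-1}>0$''.
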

\begin{proof}
We only need to prove it when $\kappa_n<0$. Notice that $\kappa_{n-1}>0$, we have
$$
\frac{\sigma_{n-1}}{\kappa_1\cdots\hat \kappa_i\cdots \kappa_{n-1}}\ge 0\Rightarrow \kappa_i+ \sum^{n-1}_{l=i}\frac{\kappa_i}{\kappa_l}\kappa_n\ge 0.
$$
Thus we have 
\begin{equation*}
\kappa_i + (n-i)\kappa_n \geq \kappa_i+ \sum^{n-1}_{l=i}\frac{\kappa_i}{\kappa_l}\kappa_n\ge 0.
\end{equation*}

\end{proof}

We recall some elementary facts about hypersurface $X=(x,u(x))\in \mathbb{R}^{n+1}$. Denoting 
\begin{equation*}
    W=\sqrt{1+\lvert Du\rvert^{2}},
\end{equation*}
the second fundamental form and the first fundamental form of the hypersurface can be written in local coordinate as 
\begin{equation*}
    h_{ij}=\frac{u_{ij}}{W}
\end{equation*}
and 
\begin{equation*}
    g_{ij}=\delta_{ij}+u_{i}u_{j}.
\end{equation*}
The inverse of the first fundamental form and the Weingarten Curvature are 
\begin{equation*}
    g^{ij}=\delta_{ij}-\frac{u_{i}u_{j}}{W^{2}}
\end{equation*}
and
\begin{equation*}
    h_{i}^{j}=D_{i}(\frac{u_{j}}{W}).
\end{equation*}

\begin{defn}
The Newton transformation tensor is defined as 
\begin{equation*}
    [T_{k}]_{i}^{j}:=\frac{1}{k!}\delta_{jj_{1}\cdots j_{k}}^{ii_{1}\cdots i_{k}}h_{j_{1}}^{i_{1}}\cdots h_{j_{k}}^{i_{k}},
\end{equation*}
and the corresponding $(2,0)$-tensor is defined as 
\begin{equation*}
    [T_{k}]^{ij}:=[T_{k}]_{p}^{i}g^{pj}.
\end{equation*}

In particularly, 
\begin{equation*}
   [T_{0}]^{ij}=g^{ij}. 
\end{equation*}
If $k \notin \{0,1,2,\cdots n \}$, we define
\begin{equation*}
    [T_k]^{ij}=0.
\end{equation*}
\end{defn}
From this definition one can easily show a divergence free identity
\[
\sum_{j}\partial_{j}[T_{k}]_{i}^{j}=0.
\]

\begin{lem} \label{lem7}
There is a family of elementary relations between $\sigma_{k}$ operators
and Newton transformation tensors

\begin{equation}
[T_{k}]_{i}^{j}  =  \sigma_{k}\delta_{i}^{j}-[T_{k-1}]_{i}^{l}h_{l}^{j},\label{eq:Tk1}
\end{equation}
or 
\begin{equation}
[T_{k}]_{i}^{j}=\sigma_{k}\delta_{i}^{j}-[T_{k-1}]_{l}^{j}h_{i}^{l}.\label{eq:Tk2}
\end{equation}
Moreover, the $(2,0)$-tensor of $T_{k}$ is symmetry such that 
\begin{equation}
[T_{k}]^{ij}=[T_{k}]^{ji}.\label{eq:Tij}
\end{equation}
\end{lem}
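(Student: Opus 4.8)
The plan is to prove all three identities by a direct combinatorial computation with the generalized Kronecker delta, exploiting its antisymmetry in both sets of indices. First I would recall that
\[
[T_k]_i^j=\frac{1}{k!}\,\delta^{i i_1\cdots i_k}_{j j_1\cdots j_k}\,h^{i_1}_{j_1}\cdots h^{i_k}_{j_k},
\]
and that the generalized Kronecker delta admits the Laplace-type expansion along its first upper/lower index pair,
\[
\delta^{i i_1\cdots i_k}_{j j_1\cdots j_k}
=\delta^i_j\,\delta^{i_1\cdots i_k}_{j_1\cdots j_k}
-\sum_{a=1}^{k}\delta^{i}_{j_a}\,\delta^{i_1\cdots i_k}_{j_1\cdots j_{a-1}\, j\, j_{a+1}\cdots j_k}.
\]
Contracting this against $h^{i_1}_{j_1}\cdots h^{i_k}_{j_k}$ and using the symmetry of the delta in the $i_1,\dots,i_k$ indices (so that each of the $k$ terms in the sum contributes equally), the first term reproduces $\sigma_k\,\delta^j_i$ up to the $1/k!$ normalization — here I would use the standard identity $\sigma_k=\frac{1}{k!}\delta^{i_1\cdots i_k}_{j_1\cdots j_k}h^{i_1}_{j_1}\cdots h^{i_k}_{j_k}$ — while the sum collapses to $[T_{k-1}]^l_i h^j_l$ after relabeling. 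This yields \eqref{eq:Tk1}. Identity \eqref{eq:Tk2} follows the same way by instead expanding along the first index pair keeping the lower index $j$ fixed, or equivalently by transposing; I expect one of the two to drop out immediately from the other once symmetry of the relevant tensors is in hand.

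For the symmetry \eqref{eq:Tij} of the $(2,0)$-tensor $[T_k]^{ij}=[T_k]^i_p g^{pj}$, I would argue by induction on $k$. The base case $[T_0]^{ij}=g^{ij}$ is symmetric by definition. For the inductive step, lower an index in \eqref{eq:Tk1} using $g$: writing $[T_k]^{ij}=\sigma_k\,g^{ij}-[T_{k-1}]^{i}_l h^l_p g^{pj}=\sigma_k g^{ij}-[T_{k-1}]^{il}h_l^{\ j}$, and noting that $h^l_p g^{pj}=h^{lj}$ is the symmetric $(2,0)$ Weingarten tensor (since $h_{lp}$ is symmetric), the right-hand side is a product/contraction of the symmetric tensor $[T_{k-1}]^{il}$ (by induction) with the symmetric tensor $h^{lj}$, plus the manifestly symmetric $\sigma_k g^{ij}$. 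One must check that $[T_{k-1}]^{il}h_l^{\ j}$ is symmetric in $i,j$; this is where I would combine both forms \eqref{eq:Tk1} and \eqref{eq:Tk2} — lowering indices in \eqref{eq:Tk2} gives $[T_k]^{ij}=\sigma_k g^{ij}-h^{il}[T_{k-1}]_l^{\ j}$, and comparing the two expressions for $[T_k]^{ij}$ forces the needed commutation, closing the induction.

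The main obstacle I anticipate is purely bookkeeping: keeping the index positions (mixed $(1,1)$ versus $(2,0)$) straight when lowering with $g^{ij}$, and correctly justifying the "each of the $k$ summands contributes equally" step in the delta expansion, which relies on the full antisymmetry of the generalized Kronecker symbol together with the symmetry of $h^{i}_j$ when indices are raised/lowered with $g$. There is no deep difficulty here — the content is the combinatorial identity for the Kronecker delta plus an induction — so the proof should be short once the expansion formula is stated cleanly. I would likely carry it out for general $k$ directly rather than doing low-dimensional warm-ups, since the combinatorics is uniform in $k$.
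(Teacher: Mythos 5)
Your proposal is correct and follows essentially the same route as the paper: a Laplace-type expansion of the generalized Kronecker delta along the distinguished index pair yields the recursions \eqref{eq:Tk1}--\eqref{eq:Tk2}, and the symmetry \eqref{eq:Tij} is proved by induction on $k$ by comparing the two lowered-index forms of the recursion. The only minor bookkeeping point is that, with the paper's conventions, the expansion you write out (matching the upper index $i$ against each lower index $j_a$) actually produces \eqref{eq:Tk2} rather than \eqref{eq:Tk1}, with the transposed expansion giving the other identity --- exactly as your ``or equivalently by transposing'' remark anticipates.
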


\begin{proof}
From Definition \ref{defsigma}, it is easy to check that 
\begin{equation}
\sigma_{k}(\kappa)=\frac{1}{k!}\delta_{j_{1}\cdots j_{k}}^{i_{1}\cdots i_{k}}h_{j_{1}}^{i_{1}}\cdots h_{j_{k}}^{i_{k}}.\label{eq:matrix}
\end{equation}
By definition and (\ref{eq:matrix}), we obtain (\ref{eq:Tk1}) as
follows: 
\begin{eqnarray*}
[T_{k}]_{i}^{j} & = & \frac{1}{k!} \delta_{jj_{1}\cdots j_{k}}^{ii_{1}\cdots i_{k}}h_{j_{1}}^{i_{1}}\cdots h_{j_{k}}^{i_{k}}\\
 & = & \frac{1}{k!} (\delta^{i}_{j} \delta_{j_{1}\cdots j_{k}}^{i_{1}\cdots i_{k}} - \delta^{i_1}_{j} \delta_{j_{1} j_2 \cdots j_{k}}^{ii_{2}\cdots i_{k}} - \delta^{i_2}_{j} \delta_{j_{1} j_2 \cdots j_{k}}^{i_1 i\cdots i_{k}} + \cdots )h_{j_{1}}^{i_{1}} h^{i_2}_{j_2} \cdots h_{j_{k}}^{i_{k}}\\
 & = & \frac{1}{k!}  \delta_{j_{1}\cdots j_{k}}^{i_{1}\cdots i_{k}} h_{j_{1}}^{i_{1}}\cdots h_{j_{k}}^{i_{k}} \delta^{i}_{j} - \frac{k}{k!} \sum_{\substack{i_2\cdots i_k \\ j_1 \cdots j_k }} \delta_{j_{1} j_2 \cdots j_{k}}^{ii_{2}\cdots i_{k}} h_{j_{1}}^{j} h_{j_2}^{i_2} \cdots h_{j_{k}}^{i_{k}} \\
 & = & \sigma_{k}\delta_{i}^{j}-[T_{k-1}]_{i}^{k}h_{k}^{j}.
\end{eqnarray*}
And we can also obtain \eqref{eq:Tk2} in the same way.\\
For $k=1$, the symmetry of the $(2,0)$-tensor of $T_{1}$ is obviously
from the symmetry of $h$. Inductively, we assume the symmetry of
$T_{k}$ is true when $k=m$. From (\ref{eq:Tk1}), we have 
\begin{eqnarray*}
[T_{m+1}]^{ij} & = & [T_{m+1}]_{l}^{i}g^{lj}=\sigma_{m+1}\delta_{l}^{i}g^{lj}-[T_{m}]_{l}^{p}h_{p}^{i}g^{lj}\\
 & = & \sigma_{m+1}g^{ij}-[T_{m}]^{pj}h_{p}^{i}.
\end{eqnarray*}
On the other hand, by (\ref{eq:Tk2}) we have 
\begin{eqnarray*}
[T_{m+1}]^{ji} & = & [T_{m+1}]_{l}^{j}g^{li}=\sigma_{m+1}\delta_{l}^{j}g^{li}-[T_{m}]_{p}^{j}h_{l}^{p}g^{li}\\
 & = & \sigma_{m+1}g^{ji}-[T_{m}]^{jp\prime} g_{p p^\prime} h^{p}_l g^{li}\\
 & = & \sigma_{m+1}g^{ji}-[T_{m}]^{jp}h_{p}^{i}.
\end{eqnarray*}
So from the symmetry of $g$ and $T_{m}$, we have proved (\ref{eq:Tij}).
\end{proof}

The algebraic form of the special Lagrangian curvature equation \eqref{eq:specialLag} is sometimes useful and is given by: 
\begin{equation}
F(\kappa):=\cos\Theta \sum\limits _{1\leq 2k+1 \leq n}(-1)^{k}\sigma_{2k+1}(\kappa(x))-\sin \Theta \sum\limits _{0 \leq 2k \leq n}(-1)^{k}\sigma_{2k}(\kappa(x))=0. \label{eq:specialLag2}
\end{equation}

It can sometimes be more useful to express the equation in the following form:
\begin{lem} \label{lem_theta}
Denote $\theta := \Theta - \frac{(n-1)\pi}{2}$, then we have
\begin{equation}
     F(\kappa)= \cos \theta   \sum\limits_{0\leq 2k \leq n} (-1)^{k} \sigma_{n-2k} - \sin \theta \sum\limits_{1 \leq 2k+1 \leq n} (-1)^{k} \sigma_{n-2k-1}=0.\label{eq:SpecialLag3}
\end{equation}
   
\end{lem}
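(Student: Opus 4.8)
The plan is to pass to complex numbers and use the fact that equation \eqref{eq:specialLag} is equivalent to the single argument identity $\prod_{j=1}^{n}(1+\sqrt{-1}\,\kappa_{j}) = R\,e^{\sqrt{-1}\,\Theta}$, where $R := \prod_{j=1}^{n}\sqrt{1+\kappa_{j}^{2}}>0$. Taking arguments of both sides returns $\sum_{j}\arctan\kappa_{j}=\Theta$, while expanding $\prod_{j}(1+\sqrt{-1}\,\kappa_{j})=\sum_{k=0}^{n}(\sqrt{-1})^{k}\sigma_{k}(\kappa)$ and separating real and imaginary parts gives the two scalar identities $R\cos\Theta=\sum_{0\le 2k\le n}(-1)^{k}\sigma_{2k}$ and $R\sin\Theta=\sum_{1\le 2k+1\le n}(-1)^{k}\sigma_{2k+1}$, whose cross-combination is precisely \eqref{eq:specialLag2}.

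The key step is to rotate this identity by the phase $-\tfrac{n\pi}{2}$. Using $\kappa_{j}-\sqrt{-1}=-\sqrt{-1}\,(1+\sqrt{-1}\,\kappa_{j})$ together with $(-\sqrt{-1})^{n}=e^{-\sqrt{-1}\,n\pi/2}$, one obtains
\begin{equation*}
\prod_{j=1}^{n}(\kappa_{j}-\sqrt{-1}) = (-\sqrt{-1})^{n}\,R\,e^{\sqrt{-1}\,\Theta}=R\,e^{\sqrt{-1}(\Theta-\frac{n\pi}{2})}.
\end{equation*}
On the other hand, by Vieta's formula $\prod_{j}(\kappa_{j}-\sqrt{-1})=\sum_{\ell=0}^{n}\sigma_{n-\ell}(\kappa)(-\sqrt{-1})^{\ell}$; the terms with $\ell=2k$ are real and equal to $(-1)^{k}\sigma_{n-2k}$, while the terms with $\ell=2k+1$ are purely imaginary and equal to $-\sqrt{-1}\,(-1)^{k}\sigma_{n-2k-1}$. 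Comparing real and imaginary parts therefore yields
\begin{equation*}
\sum_{0\le 2k\le n}(-1)^{k}\sigma_{n-2k}(\kappa)=R\cos\!\big(\Theta-\tfrac{n\pi}{2}\big),\qquad \sum_{1\le 2k+1\le n}(-1)^{k}\sigma_{n-2k-1}(\kappa)=-R\sin\!\big(\Theta-\tfrac{n\pi}{2}\big).
\end{equation*}

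Finally, since $\theta=\Theta-\tfrac{(n-1)\pi}{2}$ means $\Theta-\tfrac{n\pi}{2}=\theta-\tfrac{\pi}{2}$, we have $\cos(\Theta-\tfrac{n\pi}{2})=\sin\theta$ and $-\sin(\Theta-\tfrac{n\pi}{2})=\cos\theta$, so the two sums above equal $R\sin\theta$ and $R\cos\theta$ respectively. Substituting these into the expression for $F(\kappa)$ in \eqref{eq:SpecialLag3} gives $\cos\theta\cdot R\sin\theta-\sin\theta\cdot R\cos\theta=0$, as claimed. I expect no essential difficulty here: the only delicate points are the bookkeeping of the powers of $\sqrt{-1}$ — in particular keeping straight which parity of the summation index feeds the real part and which feeds the imaginary part — and the trigonometric shift by $\tfrac{\pi}{2}$ that converts the phase $\Theta$ into $\theta$. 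As an alternative one could bypass complex numbers and deduce \eqref{eq:SpecialLag3} directly from \eqref{eq:specialLag2} by reindexing $\sigma_{2k}\mapsto\sigma_{n-2k}$ and $\sigma_{2k+1}\mapsto\sigma_{n-2k-1}$, but that forces a separate treatment of $n$ even and $n$ odd, whereas the complex rotation handles both uniformly.
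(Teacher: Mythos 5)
Your argument is correct, but it takes a genuinely different route from the paper. The paper proves \eqref{eq:SpecialLag3} by splitting into the cases $n=2m+1$ and $n=2m$, converting $\cos\Theta,\sin\Theta$ into $\pm\cos\theta,\pm\sin\theta$, and reindexing the sums in \eqref{eq:specialLag2} term by term; your complex rotation $\prod_j(\kappa_j-\sqrt{-1})=(-\sqrt{-1})^{n}\prod_j(1+\sqrt{-1}\,\kappa_j)$ absorbs the parity bookkeeping into the single factor $(-\sqrt{-1})^{n}=e^{-\sqrt{-1}\,n\pi/2}$ and handles both parities at once, which is cleaner. The computations check out: the Vieta expansion $\prod_j(\kappa_j+z)=\sum_{\ell}\sigma_{n-\ell}z^{\ell}$, the parity of the powers of $-\sqrt{-1}$, and the shift $\cos(\theta-\tfrac{\pi}{2})=\sin\theta$, $\sin(\theta-\tfrac{\pi}{2})=-\cos\theta$ are all right.

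One caveat worth flagging. By writing $\prod_j(1+\sqrt{-1}\,\kappa_j)=R\,e^{\sqrt{-1}\,\Theta}$ at the outset you invoke the equation $\sum_j\arctan\kappa_j=\Theta$, so what you actually prove is that the expression in \eqref{eq:SpecialLag3} vanishes on solutions — you do not show that it coincides with the polynomial $F(\kappa)$ of \eqref{eq:specialLag2} as a function of $\kappa$. The paper's reindexing proof gives that stronger polynomial identity, and the identity (not just the common vanishing) is what is used later when $F^{j}_i=\partial F/\partial h^i_j$ is computed by differentiating the \eqref{eq:SpecialLag3} form in the derivation of \eqref{Tn-2}. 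The fix stays entirely inside your framework: keep $Q:=\prod_j(1+\sqrt{-1}\,\kappa_j)$ in rectangular form, note that \eqref{eq:specialLag2} reads $F=\mathrm{Im}(e^{-\sqrt{-1}\,\Theta}Q)$ while the right side of \eqref{eq:SpecialLag3} is $\mathrm{Re}(e^{-\sqrt{-1}\,\theta}P)$ with $P=(-\sqrt{-1})^{n}Q$, and observe that $e^{-\sqrt{-1}\,\theta}P=e^{-\sqrt{-1}(\theta+n\pi/2)}Q=-\sqrt{-1}\,e^{-\sqrt{-1}\,\Theta}Q$, whose real part is exactly $\mathrm{Im}(e^{-\sqrt{-1}\,\Theta}Q)$. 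This yields the identity with no reference to the equation, after which ``$=0$'' follows from \eqref{eq:specialLag2}.
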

\begin{proof}
If $n=2m+1$, it is easy to see that
\begin{eqnarray*}
    \cos \Theta &=& (-1)^{m} \cos \theta  \\
    \sin \Theta &=& (-1)^{m} \sin \theta.
\end{eqnarray*}
Then the equation \eqref{eq:specialLag2} can be written into
\begin{eqnarray*}
     F(\kappa) & = & (-1)^m \cos \theta   \sum\limits_{0\leq 2k \leq n-1} (-1)^{\frac{n-2k-1}{2}} \sigma_{n-2k} -(-1)^m  \sin \theta \sum\limits_{1 \leq 2k+1 \leq n} (-1)^{\frac{n-2k-1}{2}} \sigma_{n-2k-1}\\
     & =& \cos \theta   \sum\limits_{0\leq 2k \leq n-1} (-1)^{k} \sigma_{n-2k} - \sin \theta \sum\limits_{1 \leq 2k+1 \leq n} (-1)^{k} \sigma_{n-2k-1}=0.
\end{eqnarray*}  
If $n=2m$, it is easy to see that
\begin{eqnarray*}
    \cos \Theta &=& (-1)^{m} \sin \theta,  \\
    \sin \Theta &=& (-1)^{m-1} \cos \theta.
\end{eqnarray*}
Then the equation \eqref{eq:specialLag2} can be written into
\begin{eqnarray*}
    F(\kappa) & = &(-1)^m \sin \theta  \sum\limits_{1\leq 2k+1 \leq n-1} (-1)^{\frac{n-2k-2}{2}} \sigma_{n-2k-1} - (-1)^{m-1} \cos \theta \sum\limits_{0\leq 2k \leq n} (-1)^{\frac{n-2k}{2}} \sigma_{n-2k}\\
    &=& - \sin \theta   \sum\limits_{1\leq 2k+1 \leq n-1} (-1)^k \sigma_{n-2k-1} +\cos \theta \sum\limits_{0\leq 2k \leq n} (-1)^k \sigma_{n-2k}=0.
\end{eqnarray*}
\end{proof}

We show that $V:=\cos \Theta \sum\limits_{0 \leq 2k \leq n} (-1)^k \sigma_{2k} + \sin \Theta \sum\limits_{1\leq 2k+1 \leq n} (-1)^k  \sigma_{2k+1} $ is positive and satisfies
\begin{lem}  
    \begin{equation*}
        V\geq 1.
    \end{equation*}
\end{lem}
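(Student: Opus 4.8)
The plan is to exploit the classical complex factorization that underlies every special Lagrangian type equation. Writing $\sqrt{-1}$ for the imaginary unit to avoid clashing with the summation index, one has the purely algebraic identity
\[
\prod_{i=1}^{n}\bigl(1+\sqrt{-1}\,\kappa_{i}\bigr)=\sum_{k=0}^{n}(\sqrt{-1})^{k}\sigma_{k}(\kappa)=\sum_{0\le 2k\le n}(-1)^{k}\sigma_{2k}(\kappa)+\sqrt{-1}\sum_{1\le 2k+1\le n}(-1)^{k}\sigma_{2k+1}(\kappa),
\]
so the two alternating sums appearing in the definition of $V$ are precisely the real and imaginary parts of $\prod_{i}(1+\sqrt{-1}\,\kappa_{i})$.

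Next I would set $\theta_{i}:=\arctan\kappa_{i}\in(-\tfrac{\pi}{2},\tfrac{\pi}{2})$, so that $\kappa_{i}=\tan\theta_{i}$ and $\cos\theta_{i}>0$. Then $1+\sqrt{-1}\,\kappa_{i}=\dfrac{\cos\theta_{i}+\sqrt{-1}\sin\theta_{i}}{\cos\theta_{i}}=\dfrac{e^{\sqrt{-1}\,\theta_{i}}}{\cos\theta_{i}}$, and taking the product over $i$ together with the equation $\sum_{i}\theta_{i}=\Theta$ gives $\prod_{i=1}^{n}(1+\sqrt{-1}\,\kappa_{i})=\dfrac{e^{\sqrt{-1}\,\Theta}}{\prod_{i=1}^{n}\cos\theta_{i}}$. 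Comparing real and imaginary parts yields
\[
\sum_{0\le 2k\le n}(-1)^{k}\sigma_{2k}=\frac{\cos\Theta}{\prod_{i=1}^{n}\cos\theta_{i}},\qquad \sum_{1\le 2k+1\le n}(-1)^{k}\sigma_{2k+1}=\frac{\sin\Theta}{\prod_{i=1}^{n}\cos\theta_{i}}.
\]
Substituting these two formulas into the definition of $V$ and using $\cos^{2}\Theta+\sin^{2}\Theta=1$ produces
\[
V=\frac{\cos^{2}\Theta+\sin^{2}\Theta}{\prod_{i=1}^{n}\cos\theta_{i}}=\frac{1}{\prod_{i=1}^{n}\cos\theta_{i}}=\prod_{i=1}^{n}\sqrt{1+\kappa_{i}^{2}}\ge 1,
\]
since $\cos\theta_{i}=(1+\kappa_{i}^{2})^{-1/2}$ and every factor satisfies $\sqrt{1+\kappa_{i}^{2}}\ge 1$.

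I do not expect any serious obstacle here; the only point that needs a word of justification is the positivity of $\cos(\arctan\kappa_{i})$, which holds automatically because $\arctan$ takes values in $(-\tfrac{\pi}{2},\tfrac{\pi}{2})$, so that $\prod_{i}\cos\theta_{i}$ is a genuine positive number and the identity $V=\prod_{i}\sqrt{1+\kappa_{i}^{2}}$ is meaningful. One could instead prove $V\ge 1$ by induction on $n$ through the Newton relations of Lemma \ref{lem7}, but the complex factorization is cleaner and, moreover, identifies $V$ explicitly as the Euclidean volume element $\prod_{i}\sqrt{1+\kappa_{i}^{2}}$ of the graph, which is exactly what makes this quantity useful in the sequel.
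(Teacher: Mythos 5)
Your proof is correct and follows essentially the same route as the paper: both use the complex factorization $\prod_i(1+\sqrt{-1}\,\kappa_i)=V_1+\sqrt{-1}\,V_2$, the substitution $\theta_i=\arctan\kappa_i$ together with $\sum_i\theta_i=\Theta$, and conclude $V=\prod_i\cos^{-1}\theta_i\ge 1$. The only cosmetic difference is that you read off $V_1,V_2$ directly from real and imaginary parts of $e^{\sqrt{-1}\Theta}/\prod\cos\theta_i$, whereas the paper first derives $V_1=\cos\Theta\,V$, $V_2=\sin\Theta\,V$ from the equation before assembling the same product.
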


\begin{proof}
   We denote 
    \begin{equation*}
        V_1:= \sum\limits_{0 \leq 2k \leq n} (-1)^k \sigma_{2k} 
    \end{equation*}
    and
     \begin{equation*}
        V_2:= \sum\limits_{1\leq 2k+1 \leq n} (-1)^k  \sigma_{2k+1}.
    \end{equation*}
Our equation \eqref{eq:specialLag} and $V$ are 
\begin{equation*}
    0= \cos \Theta V_2 - \sin \Theta V_1
\end{equation*}
\begin{equation*}
    V= \sin \Theta V_2+ \cos \Theta V_1 .
\end{equation*}
Thus we have 
\begin{equation*}
    V_1 = \cos \Theta V
\end{equation*}
and 
\begin{equation*}
    V_2 = \sin \Theta V.
\end{equation*}
We have directly
\begin{eqnarray*}
 \Pi_{i=1}^{n}(1+\sqrt{-1} \kappa_{i}) & = &   \sum_{k=0}^{n}(\sqrt{-1})^k \sigma_{k}\\
& =&  \sum\limits_{0 \leq 2k \leq n} (-1)^k \sigma_{2k} +  \sum\limits_{1 \leq 2k+1 \leq n} \sqrt{-1} (-1)^k \sigma_{2k+1}\\
& = &  V_1 +\sqrt{-1}  V_2. 
\end{eqnarray*}
Thus by the equation \eqref{eq:specialLag}, we have
\begin{eqnarray*}
\cos \Theta \Pi_{i=1}^{n}(1+\sqrt{-1} \kappa_{i})-\sqrt{-1} \sin \Theta \Pi_{i=1}^{n}(1+\sqrt{-1}  \kappa_{i})= V.
\end{eqnarray*}
So we only need to prove the left hand side is positive when $u$
satisfies the equation (\ref{eq:specialLag}). Denoting $\theta_{i}=\arctan \kappa_{i}\in(-\frac{\pi}{2},\frac{\pi}{2})$,
we have 
\begin{eqnarray*}
\Pi_{i=1}^{n}(1+\sqrt{-1} \kappa_{i}) & = & \Pi_{i=1}^{n}(1+\sqrt{-1}\tan\theta_{i})\\
 & = & \Pi_{l=1}^{n}\cos^{-1}\theta_{l}\Pi_{i=1}^{n}(\cos\theta_{i}+\sqrt{-1}\sin\theta_{i})\\
 & = & \Pi_{l=1}^{n}\cos^{-1}\theta_{l}[\cos(\sum_{i=1}^{n}\theta_{i})+\sqrt{-1}\sin(\sum_{i=1}^{n}\theta_{i})]\\
 & = & \Pi_{l=1}^{n}\cos^{-1}\theta_{l}(\cos \Theta +\sqrt{-1}\sin \Theta).
\end{eqnarray*}
Thus
\begin{eqnarray*}
V&= & \cos\Theta [\Pi_{l=1}^{n}\cos^{-1}\theta_{l}(\cos \Theta +\sqrt{-1}\sin \Theta)] - \sqrt{-1} \sin\Theta [\Pi_{l=1}^{n}\cos^{-1}\theta_{l}(\cos \Theta +\sqrt{-1}\sin \Theta)]\\
&=&\Pi_{l=1}^{n}\cos^{-1}\theta_{l} \geq 1.
\end{eqnarray*}
\end{proof}

Moreover, $V$ is actually the volume form of the submanifold $\Sigma = (X,\nu) $.
\begin{lem} \label{Lem9}
There is a following elementary identity 
\begin{equation*}
    \prod_{i=1}^{n}(1+\kappa_{i}^{2})=(\sum\limits_{0 \leq 2k \leq n} (-1)^k \sigma_{2k} )^{2}+(\sum\limits_{1\leq 2k+1 \leq n} (-1)^k  \sigma_{2k+1})^{2}.
\end{equation*}
Moreover, if $X$ satisfies the equation (\ref{eq:specialLag}),
then 
\begin{eqnarray*}
    V=\sqrt{\det G}.
\end{eqnarray*} 
\end{lem}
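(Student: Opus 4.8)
The first identity is purely algebraic and uses nothing about the equation. The plan is to take the squared modulus of the complex number $\Pi_{i=1}^n(1+\sqrt{-1}\,\kappa_i)$, which was already expanded in the previous computation as $V_1+\sqrt{-1}\,V_2$ with $V_1=\sum_{0\le 2k\le n}(-1)^k\sigma_{2k}$ and $V_2=\sum_{1\le 2k+1\le n}(-1)^k\sigma_{2k+1}$. Each factor satisfies $|1+\sqrt{-1}\,\kappa_i|^2=1+\kappa_i^2$, so multiplying over $i$ gives $\Pi_{i=1}^n(1+\kappa_i^2)=|V_1+\sqrt{-1}\,V_2|^2=V_1^2+V_2^2$, which is exactly the claimed identity.

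For the second assertion I would first record the pointwise value of $V$ obtained in the proof of the preceding lemma: when $X$ solves \eqref{eq:specialLag} one has $V=\Pi_{l=1}^n\cos^{-1}\theta_l$ with $\theta_l=\arctan\kappa_l\in(-\tfrac\pi2,\tfrac\pi2)$; since $\cos^{-2}\theta_l=1+\tan^2\theta_l=1+\kappa_l^2$ and $\cos\theta_l>0$, this reads $V=\Pi_{l=1}^n\sqrt{1+\kappa_l^2}$. It then remains to identify $\det G$. Parametrizing $\Sigma=(X,\nu)$ over $M$, a tangent vector to $\Sigma$ has the form $(e,d\nu(e))$ for $e\in TM$, and since $d\nu(e)$ is tangent to $\mathbb{S}^n$ the pulled-back round metric $i_{\mathbb{S}^n}^*(\sum dy_i^2)$ agrees with the Euclidean inner product on it; hence the induced metric on $\Sigma$ is $G=g+\mathrm{III}$, where $g$ is the first and $\mathrm{III}$ the third fundamental form of $M$. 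Evaluating in an orthonormal principal frame $\{e_i\}$ of $(M,g)$, so that $g(e_i,e_j)=\delta_{ij}$ and $d\nu(e_i)=\kappa_i e_i$, one gets $G(e_i,e_j)=(1+\kappa_i^2)\delta_{ij}$, hence $\det G=\Pi_{i=1}^n(1+\kappa_i^2)=V^2$ (the determinant being understood relative to $g$, i.e. the volume distortion of the Gauss lift $p\mapsto(X(p),\nu(p))$, which is why the computation is cleanest in an orthonormal frame for $g$).

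I do not expect a genuine obstacle here: the first part is a one-line modulus computation, and the second is the standard identification of the first, second and third fundamental forms of $\Sigma$. The only point requiring care is the correct reading of $\det G$ — the ratio of the volume density of $\Sigma$ to that of $M$ — together with the remark that the $\mathbb{S}^n$-factor carries the round metric, which is precisely the restriction of the flat one. Alternatively one can bypass the frame computation: combining $\det G=\det g\cdot\Pi_i(1+\kappa_i^2)$ with the first identity and the relations $V_1=\cos\Theta\,V$, $V_2=\sin\Theta\,V$ from the previous lemma yields $\det(g^{-1}G)=\Pi_i(1+\kappa_i^2)=V_1^2+V_2^2=(\cos^2\Theta+\sin^2\Theta)V^2=V^2$.
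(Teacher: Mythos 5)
Your proposal is correct and follows essentially the same route as the paper: the first identity comes from the complex factorization $\prod_i(1+\kappa_i^2)=\prod_i(1+\sqrt{-1}\kappa_i)(1-\sqrt{-1}\kappa_i)=V_1^2+V_2^2$, and the second assertion follows from $\det G=\prod_i(1+\kappa_i^2)$ in a principal orthonormal frame together with the relations $V_1=\cos\Theta\,V$, $V_2=\sin\Theta\,V$ and $V\ge 1$ established in the preceding lemma. Your write-up is in fact slightly more careful than the paper's, which simply asserts $\sqrt{\det G}=\sqrt{\prod_i(1+\kappa_i^2)}$ without justifying the frame computation.
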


\begin{proof}
 \begin{eqnarray*}
     \sqrt{\det G} &=& \sqrt{\prod_{i=1}^{n}(1+\kappa_{i}^{2})}\\
     &=& \sqrt{\prod_{i=1}^{n} (1+\sqrt{-1} \kappa_i)(1-\sqrt{-1} \kappa_i)}\\
     &=& \sqrt{V^2_1+V^2_2}\\
     &=& \lvert V \rvert = V.
 \end{eqnarray*}
\end{proof}

Let us denote $F^{ij}:=\frac{\partial F}{\partial h_{ij}}$ , and we are
going to prove that $F^{ij}$ is positive definite when $u$ satisfies
(\ref{eq:specialLag}).
\begin{lem} 
The first fundamental form of the graph $X^{\Sigma}=(X,\nu)\in \mathbb{R}^{n+1} \times \mathbb{R}^{n+1}$  is $G_{ij}=<X_{i}^{\Sigma},X_{j}^{\Sigma}>=g_{ij}+h_{i}^{k}h_{kj}$.
And if $u$ satisfies the equation (\ref{eq:specialLag2}), the inverse of $G_{ij}$
is 
\begin{equation}
    G^{ij}=\frac{F^{ij}}{V}.\label{GF}
\end{equation}
\end{lem}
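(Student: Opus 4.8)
The plan is to compute the first fundamental form $G_{ij}$ of the Gauss lift $X^{\Sigma}=(X,\nu)$ directly, identify its eigenvalues, and then recognize the inverse via the derivative formula for $F$. First I would verify the stated formula $G_{ij}=g_{ij}+h_i^k h_{kj}$: since $X^{\Sigma}_i = (X_i, \nu_i)$ and $\nu_i = -h_i^k X_k$ (Weingarten), we get $\langle X^{\Sigma}_i, X^{\Sigma}_j\rangle = \langle X_i,X_j\rangle + \langle \nu_i,\nu_j\rangle = g_{ij} + h_i^k g_{kl} h_j^l = g_{ij}+h_i^k h_{kj}$. Diagonalizing the shape operator at a point with principal curvatures $\kappa_1,\dots,\kappa_n$ in an orthonormal frame for $g$, the matrix $G$ becomes $\mathrm{diag}(1+\kappa_1^2,\dots,1+\kappa_n^2)$, so $\det G = \prod_i(1+\kappa_i^2)$, which by Lemma \ref{Lem9} equals $V^2$ when the equation holds; this is where the hypothesis \eqref{eq:specialLag} enters. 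Consequently $\sqrt{\det G}=V$ and $G^{ij} = (\det G)\,(\mathrm{cof}\,G)^{ij}/(\det G)^{?}$... more cleanly: in the diagonalizing frame $G^{ij}=\mathrm{diag}\big((1+\kappa_1^2)^{-1},\dots,(1+\kappa_n^2)^{-1}\big)$.

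The heart of the matter is to show $F^{ij}=V\,G^{ij}$, equivalently, in the diagonalizing frame, that $\partial F/\partial h_{ii} = V/(1+\kappa_i^2)$ for each $i$. To see this I would use the complex-product representation established in the lemma preceding Lemma \ref{Lem9}: writing $P(\kappa):=\prod_{l=1}^n(1+\sqrt{-1}\,\kappa_l) = V_1 + \sqrt{-1}\,V_2$, the algebraic equation \eqref{eq:specialLag2} reads $F = \cos\Theta\, V_2 - \sin\Theta\, V_1 = \mathrm{Im}\big(e^{-\sqrt{-1}\Theta} P(\kappa)\big)$ (up to the sign bookkeeping already done in the excerpt), while $V = \mathrm{Re}\big(e^{-\sqrt{-1}\Theta}P(\kappa)\big)$. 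Differentiating $P$ with respect to the eigenvalue $\kappa_i$ gives $\partial P/\partial\kappa_i = \sqrt{-1}\,P/(1+\sqrt{-1}\kappa_i)$, so $\partial_{\kappa_i}\big(e^{-\sqrt{-1}\Theta}P\big) = \sqrt{-1}\,e^{-\sqrt{-1}\Theta}P/(1+\sqrt{-1}\kappa_i)$. Taking imaginary parts: $\partial F/\partial\kappa_i = \mathrm{Im}\big(\sqrt{-1}\,(V+\sqrt{-1}F)/(1+\sqrt{-1}\kappa_i)\big)$, and since $F=0$ on solutions this is $\mathrm{Re}\big(V/(1+\sqrt{-1}\kappa_i)\big) = V(1+\kappa_i^2)^{-1}$. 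Finally, at a point where $h_{ij}$ is diagonal the eigenvalue derivative $\partial\kappa_i/\partial h_{jj}=\delta_{ij}$ and off-diagonal derivatives vanish (for a simple-eigenvalue argument, plus a density/continuity argument to handle multiplicities), so $F^{ii}=\partial F/\partial h_{ii} = V(1+\kappa_i^2)^{-1} = V\,G^{ii}$, which is the claimed identity $G^{ij}=F^{ij}/V$; it also shows $F^{ij}>0$, the positive-definiteness asserted in the surrounding text.

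I would then note the formula is frame-independent: both sides of $G^{ij}V=F^{ij}$ transform as $(2,0)$-tensors under change of coordinates (using $g^{ij}$-orthonormalization is just a convenient choice), so the diagonal computation suffices. The main obstacle, and the only genuinely delicate point, is the differentiation of eigenvalues of the shape operator through $\sigma_k$: at points where principal curvatures coincide the individual $\kappa_i$ are not smooth functions of $h_{ij}$, so I would either invoke the standard fact that $\sigma_k$ (hence $F$, a polynomial in the $\sigma_k$) \emph{is} smooth in $h_{ij}$ and that its first derivative at a diagonal matrix is diagonal with entries given by the symmetric-function derivatives $\partial\sigma_k/\partial\kappa_i$, or argue by continuity from the dense set of matrices with distinct eigenvalues. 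Everything else is a short computation; the genuine input from the PDE is only the relation $V^2=\det G$ and $F=0$, both already available from Lemma \ref{Lem9} and the lemma before it.
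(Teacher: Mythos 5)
Your proposal is correct, but it takes a genuinely different route from the paper. The paper never diagonalizes: it writes $F^{ij}$ explicitly as the alternating sum $\cos\Theta\sum(-1)^k[T_{2k}]^{ij}-\sin\Theta\sum(-1)^k[T_{2k-1}]^{ij}$ of Newton transformation tensors and computes the product $F^{iq}G_{qj}$ directly, using the recursion $[T_k]^{ip}h_p^j=-[T_{k+1}]^{ij}+\sigma_{k+1}g^{ij}$ from Lemma \ref{lem7} to telescope the sum; the terms collapse to $V\delta^i_j$ precisely because the equation \eqref{eq:specialLag2} kills the leftover combination. Your argument instead works at a point where the shape operator is diagonal and differentiates the complexified product: from $e^{-\sqrt{-1}\Theta}\prod_l(1+\sqrt{-1}\kappa_l)=V+\sqrt{-1}F$ and $\partial_{\kappa_i}P=\sqrt{-1}P/(1+\sqrt{-1}\kappa_i)$ you get $\partial F/\partial\kappa_i=V/(1+\kappa_i^2)$ once $F=0$, which is exactly $VG^{ii}$; this is the same device Warren--Yuan use for the special Lagrangian equation, and I checked the identities $F=\cos\Theta V_2-\sin\Theta V_1=\mathrm{Im}(e^{-\sqrt{-1}\Theta}P)$ and $V=\mathrm{Re}(e^{-\sqrt{-1}\Theta}P)$ against the paper's conventions — they match. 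What each approach buys: yours is shorter and more conceptual, making transparent why the normalization is the volume element $V=\sqrt{\det G}$, at the cost of the eigenvalue-differentiation subtlety (which you correctly flag and can dispose of either by the standard fact that $\partial\sigma_k/\partial h_{ij}$ at a diagonal matrix is the diagonal matrix of symmetric-function derivatives, or by density of simple spectrum) plus a tensoriality remark to leave the diagonalizing frame; the paper's Newton-tensor computation is purely algebraic, manifestly frame-independent, and needs no eigenvalue calculus, but is longer and less illuminating. The sign you use in the Weingarten relation differs from the paper's convention, but since $G_{ij}$ is quadratic in $h$ this is immaterial.
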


\begin{proof}

We have $X_{i}^{\Sigma}=(X_{i},h_{i}^{k}X_{k})$, then the first fundamental form is 
\begin{equation*}
G_{ij}=<X_{i}^{\Sigma},X_{j}^{\Sigma}>=g_{ij}+h_{i}^{k}h_{kj}.
\end{equation*}
And by definition, we have 
\begin{equation*}
F^{ij}= \cos \Theta \sum\limits _{0 \leq 2k\leq n-1} (-1)^{k}[T_{2k}]^{ij} -\sin \Theta \sum\limits _{1 \leq 2k-1 \leq n-1} (-1)^k [T_{2k-1}]^{ij}.
\end{equation*}
Due to an elementary identity
\begin{equation*}
[T_{k}]^{ip}h_{p}^{j}=-[T_{k+1}]^{ij}+\sigma_{k+1}g^{ij},
\end{equation*}
we have 
\begin{eqnarray*}
F^{iq}G_{qj} & = & \cos\Theta \sum\limits _{0 \leq 2k \leq n-1}(-1)^{k}[T_{2k}]^{iq}(g_{qj}+ h_{q}^{p}h_{pj}) -\sin\Theta \sum\limits_{1\leq 2k-1 \leq n-1} (-1)^k [T_{2k-1}]^{iq} (g_{qj}+h_{q}^{p}h_{pj})\\
&= & \cos \Theta \sum\limits_{0 \leq 2k \leq n-1} (-1)^k \big \{-[T_{2k-1}]^{ip} h_{pj}+ \sigma_{2k} \delta^i_j +\sigma_{2k+1} h^{i}_j - [T_{2k+1}]^{ip}h_{pj} \big\} \\
& &-\sin \Theta \sum\limits_{1\leq 2k-1 \leq n-1} (-1)^k \big\{ -[T_{2k-2}]^{ip} h_{pj} +\sigma_{2k-1} \delta^i_j +\sigma_{2k} h^i_j -[T_{2k}]^{ip} h_{pj}   \big\}\\
&=&  \big\{ \cos \Theta \sum\limits_{0 \leq 2k \leq n} (-1)^k \sigma_{2k}  +\sin \Theta \sum\limits_{1\leq 2k+1 \leq n} (-1)^k  \sigma_{2k+1} \big\} \delta^i_j .
\end{eqnarray*}
Here we have used the equation \eqref{eq:specialLag2} in the last equality.
So we have verified that 
\[
G^{ij}=\frac{\cos \Theta \sum\limits _{0 \leq 2k\leq n-1} (-1)^{k}[T_{2k}]^{ij} -\sin \Theta \sum\limits _{1\leq 2k-1\leq n-1} (-1)^k [T_{2k-1}]^{ij}}{\cos \Theta \sum\limits_{0 \leq 2k \leq n} (-1)^k \sigma_{2k}  +\sin \Theta \sum\limits_{1\leq 2k+1 \leq n} (-1)^k  \sigma_{2k+1}}=\frac{F^{ij}}{V}.
\]
\end{proof}

\section{An important differential inequality}
For convenience, we choose an orthonormal frame in $\mathbb{R}^{n+1}$ such that $\{e_{1},e_{2},\cdots,e_{n}\}$ are tangent to $M$ and $\nu$ is outer normal on $M$. Let us denote the dual form and the connection form to be $w^i$ and $w^j_i$, where $i$ and $j$ range from $1$ to $n$. The second fundamental form is denoted by $h_{ij}$. We have the moving frame formulas on $M$
\begin{eqnarray*}
  dX &=& w^i e_i \\
  d \nu &=& \sum_j h_{ij}  w^i e_j \\
  de_i &=& w^j_i e_j - h_{ij} w^j \nu.
\end{eqnarray*}
Due to $<e_i,e_j>=\delta_{ij}$, we have 
\begin{equation*}
    w^j_i + w^i_j=0.
\end{equation*}
The structure equations on $M$ are
\begin{eqnarray*}
  dw^i &=& w^j \wedge  w^i_j, \\
  d w^i_j &=& w^k_j \wedge w^i_k - h_{jk} w^k \wedge h_{il} w^l.
\end{eqnarray*}
The curvature tensor is defined by
\begin{equation*}
    \frac{1}{2} R_{ijkl} w^k \wedge w^l := dw^i_j - w^k_j \wedge w^i_k.
\end{equation*}
For a function $f: M\rightarrow \mathbb{R}$, we define its covariant derivative by 
\begin{eqnarray*}
  f_i  &:=& df(e_i), \\
  f_{ij} &:=& df_i(e_j) - w^k_i(e_j) f_k, \\
  f_{ijk} &:=& df_{ij}(e_k)-w^l_i (e_k) f_{lj}-w^l_j (e_k) f_{il},\\
   f_{ijkl} &:=& df_{ijk}(e_l)-w^p_i (e_l) f_{pjk}-w^p_j (e_l) f_{ipk}-w^p_k (e_l) f_{ijp}.
\end{eqnarray*}
Thus we have the following fundamental formulas of a hypersurface in $\mathbb{R}^{n+1}$:
\begin{eqnarray*}
X_i &=& e_i,\\
X_{ij} & = & -h_{ij}\nu ,\begin{array}{cc}
 & (Gauss\,\,formula)\end{array}\\
\nu_{i} & = & h_{ij}\delta^{jk} e_{k},\begin{array}{cc}
 & (Weingarten\,\,equation)\end{array}\\
h_{ijk} & = & h_{ikj},\begin{array}{cc}
 & (Codazzi\,\,equation)\end{array}\\
R_{ijkl} & = & h_{ik}h_{jl}-h_{il}h_{jk}.\begin{array}{cc}
 & (Gauss\,\,equation)\end{array}
\end{eqnarray*}
We also have the following commutator formula: 
\begin{eqnarray}
h_{ijkl}-h_{ijlk} & = & \sum_m h_{im}R_{mjkl}+\sum_m h_{mj}R_{mikl}.\label{eq:commute}
\end{eqnarray}

Combining Codazzi equation, Gauss equation and (\ref{eq:commute}), we have 
\begin{eqnarray}
h_{iikk}=h_{kkii}+\sum_{m}(h_{im}h_{mi}h_{kk}-h_{mk}^{2}h_{ii}).\label{eq:commute2}
\end{eqnarray}

On the submanifold $X^\Sigma = (X,\nu) \in \mathbb{R}^{n+1} \times \mathbb{R}^{n+1}$, we denote $\{ e_i^{\Sigma} := (e_i,\nu_i) \}^n_{i=1}$ which are tangent vectors on $\Sigma$. And $\nu_1 :=(\nu,0)$, $\nu_2 :=(0,\nu)$, $\overline{e_i} := (-\nu_i, e_i)$ are normal vectors on $\Sigma$, where $i$ ranges from $1$ to $n$.  Using this frame $\{e^\Sigma_i,\nu_1, \nu_2, \overline{e_{i}}\}$, we have 
$G_{ij} = <e_i^{\Sigma},e_j^{\Sigma}> = \delta_{ij} + h_{ik}\delta^{kl} h_{lj}.$
The moving frame formulas on $X^\Sigma$ are
\begin{eqnarray*}
    dX^\Sigma &=& w^i e^\Sigma_i, \\
    de^\Sigma_i &=& (w^j_i e_j - h_{ik}w^k \nu, \sum\limits_j h_{ijk}w^k e_j +  w^p_i h_{pj} e_j -h_{ij} h_{jk} w^k \nu )\\
    &=& \sum_j (w^j_i+h_{ipl}G^{kp} h_{kj} w^l ) e^\Sigma_j +G^{jk} h_{ikl}w^l  \overline{e_j} - h_{ik} w^k \nu_1 - \sum_j h_{ij}h_{jk}w^k \nu_2. 
\end{eqnarray*}
Thus, the Levi-Civita connection form of $G_{ij}$ on $\Sigma$ is
\begin{equation*}
    (w^{\Sigma})^j_i := w^j_i+h_{ipl}G^{kp} h_{kj} w^l  .
\end{equation*}
We have the following lemma:
\begin{lem} Any function $f: M\rightarrow \mathbb{R}$ can also be regarded as a function on $\Sigma$. We have the equation
\begin{equation} 
    \triangle_{G} f =G^{ij} f_{ij}, \label{1stEQ}
\end{equation}
where $f_{ij}$ represents the second covariant hessian of $f$ on $M$, and $\triangle_G$ denotes the Laplacian with respect to the metric $G$.
\end{lem}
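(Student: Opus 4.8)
The claim is that for a function $f$ defined on $M$ (equivalently on $\Sigma$), its $G$-Laplacian equals $G^{ij}f_{ij}$, where $f_{ij}$ is the covariant Hessian computed with respect to the induced metric $g$ on $M$ (not $G$). The natural route is to compute the $G$-Hessian $f^\Sigma_{ij}$ of $f$ directly from the definition of covariant differentiation on $\Sigma$, using the connection form $(w^\Sigma)^j_i = w^j_i + h_{ipl}G^{kp}h_{kj}w^l$ that has just been recorded, and then trace with $G^{ij}$.

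**Key steps.** First I would write $df = f_k w^k$ (the first covariant derivative $f_k = df(e^\Sigma_k)$ agrees on $M$ and $\Sigma$ since $e^\Sigma_k$ projects to $e_k$ and $f$ does not depend on the normal directions). Then, by the definition of the second covariant derivative on $\Sigma$,
\[
f^\Sigma_{ij} = df_i(e^\Sigma_j) - (w^\Sigma)^k_i(e^\Sigma_j) f_k = \big(df_i(e_j) - w^k_i(e_j)f_k\big) - h_{ipl}G^{kp}h_{kj}\,\delta^{l}_{j}\cdot(\text{contraction})\,f_? .
\]
More carefully: $f^\Sigma_{ij} = f_{ij} - h_{ipl}G^{kp}h_{kj}f_k$, where $f_{ij}$ is exactly the $M$-Hessian from the definitions in Section 3 and the extra term comes from the difference of connection forms. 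Next I would contract with $G^{ij}$: the first piece gives $G^{ij}f_{ij}$, which is the asserted right-hand side. It then remains to show the correction term vanishes after tracing, i.e.
\[
G^{ij}h_{ipl}G^{kp}h_{kj} = 0 \quad\text{(as a coefficient of }f_l\text{ after renaming)}.
\]
This is where the Codazzi equation enters: $h_{ipl}$ is symmetric in all three indices, while $G^{ij}G^{kp}h_{kj}$ contracted against it produces, after relabeling, a contraction of the fully symmetric tensor $h_{ipl}$ against $G^{ij}G^{kp}h_{kj}$; one checks this equals $G^{kp}h_{kpl}$-type expression that reduces to $\partial_l(\log\sqrt{\det G})$-type terms which cancel — more precisely, the standard fact that the trace of the connection difference is the logarithmic derivative of the volume ratio, and here one shows it is zero (or, alternatively, that $\triangle_G f$ unwound from $\frac{1}{\sqrt{\det G}}\partial_i(\sqrt{\det G}\,G^{ij}\partial_j f)$ in an adapted frame produces precisely the same correction term with the opposite sign, so they cancel).

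**Main obstacle.** The delicate point is the bookkeeping of the correction term $h_{ipl}G^{kp}h_{kj}$ under the $G^{ij}$-trace: one must verify it really drops out, using the symmetry of $h_{ijk}$ (Codazzi) together with the symmetry $[T_k]^{ij}=[T_k]^{ji}$ and the relation $G^{ij}=F^{ij}/V$ from Lemma~\eqref{GF}, or else invoke the divergence-free identity $\sum_j \partial_j[T_k]^j_i = 0$. An alternative, cleaner approach that I would fall back on if the frame computation gets unwieldy: use the intrinsic formula $\triangle_G f = \frac{1}{\sqrt{\det G}}\partial_i\big(\sqrt{\det G}\,G^{ij}\partial_j f\big)$ in coordinates $x$ on $B_{10}$, expand using $\sqrt{\det G}=V$ and $G^{ij}=F^{ij}/V$, and compare with $G^{ij}f_{ij}$ where the Christoffel symbols of $g$ appear; the cross terms are then controlled by the divergence-free property of the Newton tensors. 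I expect the whole argument to be short once the vanishing of the trace of the connection-form difference is isolated as the key identity.
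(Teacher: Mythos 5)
Your plan follows the same route as the paper: expand $\triangle_G f = G^{ij}\bigl(\partial_{ij}f - (w^\Sigma)^k_i(e_j)f_k\bigr)$, split off $G^{ij}f_{ij}$ using the connection form $(w^\Sigma)^k_i = w^k_i + h_{ipl}G^{qp}h_{qk}w^l$, and show the remainder $-G^{ij}h_{ipj}\,G^{qp}h_{qk}\,f_k$ vanishes. That much is exactly right, and you correctly identify that the question reduces to showing $G^{ij}h_{ipj}=0$.

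However, the mechanism you propose for this vanishing is wrong. You suggest it comes from symmetry of the Newton tensors $[T_k]^{ij}$, the divergence-free identity $\sum_j\partial_j[T_k]^j_i=0$, or a cancellation of $\partial_l\log\sqrt{\det G}$ terms. None of these works, and none is purely algebraic: $G^{ij}h_{ipj}$ is not identically zero for an arbitrary hypersurface. The actual argument is Codazzi plus the PDE. By Codazzi, $h_{ipj}=h_{ijp}$, so $G^{ij}h_{ipj}=G^{ij}h_{ijp}$; and differentiating the constant-phase equation $\sum_i\arctan\kappa_i=\Theta$ in direction $e_p$ gives $G^{ij}h_{ijp}=\Theta_p=0$. (This is the identity recorded as \eqref{G1} a few lines later in the paper.) In the diagonalizing frame $G^{ij}h_{ijp}=\sum_i\frac{h_{iip}}{1+\kappa_i^2}$ is precisely the derivative of $\sum_i\arctan\kappa_i$, which has nothing to do with $\partial_p\log\sqrt{\det G}=\sum_i\frac{\kappa_ih_{iip}}{1+\kappa_i^2}$. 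So your "fallback" via the volume formula would also require invoking the PDE at the same spot; without it, the lemma is simply false. You need to state explicitly that the lemma uses the assumption that $M$ solves \eqref{eq:specialLag}, and that the correction term dies because $\Theta$ is constant.
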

\begin{proof}
\begin{eqnarray*}
    \triangle_{G} f &=& G^{ij} (\partial_{ij}f-(w^\Sigma)^k_i(e_j) f_k)\\
    &=& G^{ij} (\partial_{ij}f-w^k_i(e_j) f_k +w^k_i(e_j) f_k-  (w^\Sigma)^k_i(e_j) f_k)\\
    &=& G^{ij} f_{ij} - G^{ij} h_{ipl} G^{pq}h_{qk}w^l(e_j) f_k   \\
    &=&G^{ij} f_{ij} - G^{ij} h_{ipj} G^{pq}h_{qk} f_k.  
\end{eqnarray*}
Because
\begin{equation*}
    G^{ij} h_{ipj} =0,
\end{equation*}
we obtain the identity \eqref{1stEQ}.
     
\end{proof}

The following diagonal lemma is very useful. It is employed in Shankar-Yuan \cite{SY22} and Zhou \cite{zhou2023hessian}. For completeness, we also provide a proof.
\begin{lem} \label{Diag}
Let $a_1,\cdots a_{m}$ be positive constants, $ (b_1,\cdots,b_{m})\in \mathbb{R}^{m}$ and let $ x:=(x_1,\cdots,x_m)$ be the unknowns. 
The quadratic polynomial
\begin{equation*}
    Q(x) = \sum\limits^m_{i=1} a^2_i x^2_i - (\sum\limits^m_{i=1}  b_i x_i)^2 \geq 0
\end{equation*}
is equivalent to the following inequality,
\begin{equation*}
    1-\sum^{m}_{i=1}\frac{b^2_i}{a^2_i}\ge 0.
\end{equation*}
\end{lem}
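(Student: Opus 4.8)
The plan is to prove both implications by recasting the quadratic form $Q$ in a way that makes the condition $1 - \sum_i b_i^2/a_i^2 \ge 0$ transparent. First I would substitute $y_i := a_i x_i$, which is a legitimate change of variables since each $a_i > 0$, so that
\begin{equation*}
Q(x) = \sum_{i=1}^m y_i^2 - \Bigl(\sum_{i=1}^m \tfrac{b_i}{a_i}\,y_i\Bigr)^2 = |y|^2 - \langle c, y\rangle^2,
\end{equation*}
where $c := (b_1/a_1, \dots, b_m/a_m)$. Thus $Q(x) \ge 0$ for all $x \in \mathbb{R}^m$ if and only if $|y|^2 - \langle c, y\rangle^2 \ge 0$ for all $y \in \mathbb{R}^m$.

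For the direction ($\Leftarrow$): if $|c|^2 = \sum_i b_i^2/a_i^2 \le 1$, then by Cauchy--Schwarz $\langle c, y\rangle^2 \le |c|^2 |y|^2 \le |y|^2$, so $Q \ge 0$ everywhere. For the direction ($\Rightarrow$): suppose $|c|^2 > 1$. Then I would simply test $Q$ at $y = c$ (i.e.\ $x_i = b_i/a_i^2$), giving $|c|^2 - |c|^4 = |c|^2(1 - |c|^2) < 0$, which contradicts $Q \ge 0$. This exhibits an explicit point where the form is negative, completing the contrapositive.

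There is no real obstacle here; the only thing to be slightly careful about is the degenerate case where some $b_i = 0$ or where $c = 0$, but these are handled uniformly by the argument above ($c = 0$ gives $|c|^2 = 0 \le 1$ and $Q = |y|^2 \ge 0$ trivially). One could alternatively phrase the whole statement as the positive semidefiniteness of the symmetric matrix $A = \operatorname{diag}(a_i^2) - b b^\top$ and invoke the fact that a rank-one perturbation $D - vv^\top$ of a positive definite diagonal matrix $D$ is positive semidefinite precisely when $v^\top D^{-1} v \le 1$; but the Cauchy--Schwarz argument above is self-contained and shorter, so I would present that.
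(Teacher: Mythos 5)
Your proof is correct and follows essentially the same route as the paper: the substitution $y_i = a_i x_i$ is exactly the matrix congruence by $A = \operatorname{diag}(1/a_i)$ that the paper uses, reducing the problem to the positivity of $I - cc^{\top}$. The only difference is that you spell out the final equivalence (via Cauchy--Schwarz and testing at $y = c$) where the paper simply appeals to linear algebra, which makes your version slightly more self-contained.
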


\begin{proof}
Denote $L:= (b_1,\cdots,b_m)\in \mathbb{R}^m$ and $E_i = (0,\cdots,\overset{i}{1},\cdots,0)$. 
In order to study the positivity of the quadratic  $Q$, we consider the following $m\times m$ symmetric matrix,
\begin{equation*} 
\Lambda=\sum^{m}_{i=1}a^2_i E_i^T E_i-L^TL.
\end{equation*}
Denote 
\begin{equation*}
    \overline{L}:= \sum^m_i \frac{b_i}{a_i} E_i
\end{equation*}
and
\begin{equation*}
A := \left[    
\begin{array}{lllll}
 \frac{1}{a_1}& \cdots & & \cdots & 0 \\
 \vdots   & \ddots&  & &  \vdots \\
   \vdots  & & \frac{1}{a_i} & &  \vdots \\
\vdots  & &   &  \ddots& \vdots\\
 0  &\cdots &  &\cdots  &  \frac{1}{a_m}\\
\end{array}
\right].
\end{equation*}
We multiply the positive matrix $A$ on both sides of the matrix $\Lambda$ 
\begin{eqnarray*}
     \overline{\Lambda} &=& A \Lambda A\\
     &=& I - \overline{L}^T \overline{L}.
\end{eqnarray*}
From linear algebra, we know  that the positivity of the matrix $\overline{\Lambda}$ is equivalent to 
\begin{equation*}
    1- \lvert \overline{L} \rvert^2 \geq 0.
\end{equation*}
Thus $\Lambda\ge 0$ is equivalent to the following inequality,
\begin{equation*}
    1-\sum^{m}_{i=1}\frac{b^2_i}{a^2_i}\geq 0.
\end{equation*}

\end{proof}
We also need the following algebraic lemmas which are proved in Yuan \cite{Yuan16Slag}.
\begin{lem} \label{kappa_iG_ii}
Suppose $\Theta = \frac{(n-2)\pi}{2}$, then we have
\begin{equation*}
    \sum^{n}_{i=1}\frac{\kappa_i}{1+\kappa_i^2}\ge 0.
\end{equation*} 
Moreover, if $n\geq3$ and $\lvert \kappa \rvert <C$, we have 
\begin{equation}
    \sum^{n}_{i=1}\frac{\kappa_i}{1+\kappa_i^2} > 0. \label{strictly}
\end{equation} 
\end{lem}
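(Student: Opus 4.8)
The claim is an algebraic fact about the phase $\Theta=\frac{(n-2)\pi}{2}$. Writing $\theta_i=\arctan\kappa_i\in(-\tfrac{\pi}{2},\tfrac{\pi}{2})$, the constraint $\sum_i\arctan\kappa_i=\frac{(n-2)\pi}{2}$ becomes $\sum_i\theta_i=\frac{(n-2)\pi}{2}$, and since $\frac{\kappa_i}{1+\kappa_i^2}=\sin\theta_i\cos\theta_i=\tfrac12\sin 2\theta_i$, the quantity to control is $\tfrac12\sum_{i=1}^n\sin 2\theta_i$. The plan is to show $\sum_i\sin 2\theta_i\ge 0$ under this constraint, with strictness once $|\kappa|<C$, i.e. once all $\theta_i$ are bounded away from $\pm\tfrac{\pi}{2}$.

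First I would order $\theta_1\ge\theta_2\ge\cdots\ge\theta_n$. The key structural observation (this is Wang--Yuan's argument for $\Gamma_{n-1}$) is that the critical phase forces at most one $\theta_i$ to be negative and the positive ones to be large: indeed $\theta_1+\cdots+\theta_{n-1}=\frac{(n-2)\pi}{2}-\theta_n>\frac{(n-2)\pi}{2}-\frac{\pi}{2}=\frac{(n-3)\pi}{2}$ when $\theta_n<0$, so the average of $\theta_1,\dots,\theta_{n-1}$ exceeds $\frac{(n-3)\pi}{2(n-1)}$; more usefully, $\theta_2,\dots,\theta_{n-1}$ can be shown to be at least $-\theta_n$-ish by a pairing estimate, and in particular $\theta_{n-1}+\theta_n\ge 0$, i.e. $\theta_{n-1}\ge-\theta_n=|\theta_n|$. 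Then I would pair the smallest (possibly negative) angle $\theta_n$ with $\theta_{n-1}$: since $\sin$ is concave on $[0,\pi]$ and odd, and $\theta_{n-1}\ge|\theta_n|$ with $\theta_{n-1}\le\tfrac{\pi}{2}$, one gets $\sin 2\theta_{n-1}+\sin 2\theta_n\ge 0$ whenever $2\theta_{n-1}+2\theta_n\in[0,\pi]$ and $2\theta_{n-1}\le\pi$ — which holds here. All the remaining angles $\theta_1,\dots,\theta_{n-2}$ need handling too; the cleanest route is to note that the sum constraint plus $\theta_i\le\tfrac\pi2$ forces $\theta_i\ge\frac{(n-2)\pi}{2}-(n-1)\tfrac\pi2=-\tfrac\pi2$ trivially, but more sharply that at most one index is negative, so $\theta_1,\dots,\theta_{n-1}\ge 0$ when $\theta_n\ge 0$ (all terms nonnegative, done), and when $\theta_n<0$ we use $\theta_{n-1},\dots,\theta_1\ge 0$ plus the pairing above for the last two and nonnegativity of $\sin 2\theta_i$ for $i\le n-2$.

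For the strict inequality when $n\ge 3$ and $|\kappa|<C$: boundedness gives $|\theta_i|\le\arctan C<\tfrac\pi2$ for all $i$, so every $2\theta_i$ lies strictly inside $(-\pi,\pi)$. If all $\theta_i\ge 0$ then since $\sum\theta_i=\frac{(n-2)\pi}{2}>0$ (using $n\ge3$), at least one $\theta_i>0$ and hence $\sin 2\theta_i>0$ strictly (as $2\theta_i\in(0,\pi)$), giving $\sum\sin 2\theta_i>0$. If some $\theta_n<0$, then $\theta_{n-1}\ge|\theta_n|>0$ and the pairing inequality $\sin 2\theta_{n-1}+\sin 2\theta_n\ge 0$ is strict unless $\theta_{n-1}=|\theta_n|$ and $2\theta_{n-1}=\pi$, the latter impossible under $|\kappa|<C$; combined with $\theta_{n-2}\ge\theta_{n-1}>0$ contributing a strictly positive term, we get strict positivity.

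**Main obstacle.** The delicate point is establishing the pairing inequality $\sin 2\theta_{n-1}+\sin 2\theta_n\ge 0$, which rests on the geometric fact $\theta_{n-1}+\theta_n\ge 0$ (equivalently $\kappa_{n-1}\kappa_n\ge -1$) for critical-phase $\Gamma_{n-1}$ configurations; this is exactly the content of Yuan's lemma being cited, so I would invoke it rather than reprove it. The only real bookkeeping is checking that $2\theta_{n-1}\le\pi$ and $2\theta_{n-1}+2\theta_n\ge 0$ keep us in the regime where concavity of $\sin$ and the sum-to-product identity $\sin 2\theta_{n-1}+\sin 2\theta_n=2\sin(\theta_{n-1}+\theta_n)\cos(\theta_{n-1}-\theta_n)$ give the sign directly — and indeed $\sin(\theta_{n-1}+\theta_n)\ge 0$ since $\theta_{n-1}+\theta_n\in[0,\pi)$, while $\cos(\theta_{n-1}-\theta_n)\ge 0$ since $|\theta_{n-1}-\theta_n|<\pi$ needs $\theta_{n-1}-\theta_n\le\tfrac\pi2$, which follows from $\theta_{n-1}\le\tfrac\pi2$ and $\theta_n\ge -\tfrac\pi2$ only in the limiting sense; under $|\kappa|<C$ it is strict. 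This sum-to-product identity is in fact the slickest packaging of the whole argument, and I would use it as the computational backbone.
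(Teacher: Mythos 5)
Your reduction to showing $\sum_{i=1}^n \sin 2\theta_i \ge 0$ matches the paper's, and the structural observations (at most one $\theta_i$ negative at the critical phase, and $\theta_{n-1}+\theta_n>0$ when $\theta_n<0$) are correct. However, the pairing inequality your argument hinges on is false: $\theta_{n-1}+\theta_n\ge 0$ does \emph{not} imply $\sin 2\theta_{n-1}+\sin 2\theta_n\ge 0$. From the sum-to-product identity
\[
\sin 2\theta_{n-1}+\sin 2\theta_n=2\sin(\theta_{n-1}+\theta_n)\cos(\theta_{n-1}-\theta_n),
\]
the first factor is indeed nonnegative, but the second requires $\theta_{n-1}-\theta_n\le\tfrac{\pi}{2}$, and the bounds $\theta_{n-1}<\tfrac{\pi}{2}$, $\theta_n>-\tfrac{\pi}{2}$ only give $\theta_{n-1}-\theta_n<\pi$. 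A concrete counterexample at the critical phase with $n=3$: take $\theta_1=\theta_2=\tfrac{3\pi}{8}$, $\theta_3=-\tfrac{\pi}{4}$, i.e.\ $\kappa_1=\kappa_2=1+\sqrt{2}$, $\kappa_3=-1$, which lies in $\Gamma_2$ and has $\theta_1+\theta_2+\theta_3=\tfrac{\pi}{2}$. Here $\theta_2+\theta_3=\tfrac{\pi}{8}>0$, yet
\[
\sin 2\theta_2+\sin 2\theta_3=\sin\tfrac{3\pi}{4}+\sin\bigl(-\tfrac{\pi}{2}\bigr)=\tfrac{\sqrt{2}}{2}-1<0,
\]
so the two smallest angles alone give a negative contribution; only after adding $\sin 2\theta_1$ does the full sum become $\sqrt{2}-1>0$.

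This is precisely why the paper does not pair $\theta_{n-1}$ with $\theta_n$ but instead aggregates \emph{all} $n-1$ nonnegative angles against the single negative one. Setting $a_i:=\pi-2\theta_i\ge 0$ for $i\le n-1$, the phase constraint gives $\sum_{i=1}^{n-1}a_i=\pi+2\theta_n\in[0,\pi)$, and iterated subadditivity of $\sin$ on $[0,\pi]$ (valid because the running partial sums never leave $[0,\pi]$) yields $\sum_{i=1}^{n-1}\sin 2\theta_i=\sum_{i=1}^{n-1}\sin a_i\ge\sin\bigl(\sum_{i=1}^{n-1}a_i\bigr)=\sin(\pi+2\theta_n)=-\sin 2\theta_n$, which is exactly the needed bound. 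Your pairing would require an additional argument distributing the deficit $\sin 2\theta_{n-1}+\sin 2\theta_n$ over the remaining positive terms; as written it is a genuine gap, and the strict-inequality case for $n\ge3$ inherits it since that part of your argument also rests on the pairing.
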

\begin{proof}
     There is at most one negative direction on critical phase say $\kappa_n<0$. Without loss of generality, we assume $\kappa_n<0$. Let $\theta_i=\arctan \kappa_i$. Then we have 
\begin{equation*}
    \pi > \pi + 2\theta_n = \sum\limits^{n-1}_{i=1}(\pi -2\theta_i)\geq0.
\end{equation*}
By elementary identities for $\sin$ function, we have 
\begin{eqnarray}
    \sin [\sum\limits^{n-1}_{i=1} (\pi - 2\theta_i)] &=&\sin(\pi -2 \theta_1) \cos  \sum\limits^{n-1}_{i=2} (\pi - 2\theta_i) + \sin \sum\limits^{n-1}_{i=2} (\pi - 2\theta_i)  \cos (\pi -2 \theta_1) \nonumber \\
    &\leq&  \sin (\pi - 2\theta_1) + \sin \sum\limits^{n-1}_{i=2} (\pi -2\theta_i) \label{theta_1} \\
    &\leq & \vdots \nonumber \\
    & \leq & \sum\limits^{n-1}_{i=1} \sin (\pi - 2\theta_i),\nonumber 
\end{eqnarray}
and 
\begin{eqnarray*}
    \sum\limits^{n-1}_{i=1} \sin (\pi- 2\theta_i) & =&  \sum\limits^{n-1}_{i=1} \sin (2\theta_i) .
\end{eqnarray*}
Thus, we have
    \begin{equation*}
    2\sum^{n}_{i=1}\frac{\kappa_i}{1+\kappa_i^2}=\sum^{n}_{i=1}\sin(2\theta_i)\ge \sin[\sum^{n-1}_{i=1}(\pi-2\theta_i)]-\sin(\pi+2\theta_n)=0.
\end{equation*} 
If $\lvert \kappa \rvert <C $, we know that 
\begin{equation*}
    \pi - 2 \theta_i >0, \quad for \quad \forall \quad i<n.
\end{equation*}
Therefore, the inequality \eqref{theta_1} becomes a strict inequality when $n\geq 3$, leading to \eqref{strictly}.

\end{proof}

The following inequality is employed in Yuan \cite{Yuan16Slag}.

\begin{lem} \label{inverseKappa}
When $\Theta \geq \frac{(n-2)\pi}{2}$ and $\kappa_n < 0$, we have 
\begin{equation*}
    \sum\limits^n_{i=1} \frac{1}{\kappa_i} \leq 0.
\end{equation*}
\end{lem}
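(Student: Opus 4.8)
The plan is to use the constant-phase condition $\sum_{i=1}^n \arctan\kappa_i = \Theta$ together with the hypothesis $\kappa_n<0$ and admissibility $\kappa\in\Gamma_{n-1}$ to force all other $\kappa_i$ to be large positive, so that $\sum 1/\kappa_i$ is dominated by the single large negative term $1/\kappa_n$. Concretely, write $\theta_i=\arctan\kappa_i\in(-\tfrac\pi2,\tfrac\pi2)$. Since $\kappa\in\Gamma_{n-1}$, at most one $\kappa_i$ is negative, so $\theta_1,\dots,\theta_{n-1}>0$ and $\theta_n<0$, with $\sum_{i=1}^n\theta_i=\Theta\ge\tfrac{(n-2)\pi}{2}$. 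This gives $\sum_{i=1}^{n-1}\theta_i \ge \tfrac{(n-2)\pi}{2}-\theta_n > \tfrac{(n-2)\pi}{2}$, i.e. the positive angles are forced to be large on average.

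First I would record the key quantitative consequence: since each $\theta_i<\tfrac\pi2$, from $\sum_{i=1}^{n-1}\theta_i>\tfrac{(n-2)\pi}{2}$ we get, for each fixed $j\le n-1$, that $\theta_j > \tfrac{(n-2)\pi}{2}-\sum_{i\le n-1,\,i\ne j}\theta_i > \tfrac{(n-2)\pi}{2}-(n-2)\tfrac\pi2 = 0$ — that only recovers positivity, so I need the sharper bookkeeping. The right move is to note $\tfrac\pi2-\theta_j = \tfrac\pi2-\theta_j$ and $\sum_{i=1}^{n-1}(\tfrac\pi2-\theta_i) = \tfrac{(n-1)\pi}{2}-\sum_{i=1}^{n-1}\theta_i < \tfrac{(n-1)\pi}{2}-\tfrac{(n-2)\pi}{2}+\theta_n = \tfrac\pi2+\theta_n < \tfrac\pi2$ (using $\theta_n<0$). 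So the $n-1$ nonnegative quantities $\tfrac\pi2-\theta_i$ sum to something strictly less than $\tfrac\pi2$, hence also less than $\tfrac\pi2+\theta_n<\tfrac\pi2$; in particular each satisfies $\tfrac\pi2-\theta_i<\tfrac\pi2+\theta_n$.

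Now I translate back to the $\kappa$'s. Note $\tfrac1{\kappa_i} = \cot\theta_i = \tan(\tfrac\pi2-\theta_i)$ for $i\le n-1$, and $\tfrac1{\kappa_n}=\cot\theta_n = -\tan(\tfrac\pi2+\theta_n)$ (here $\tfrac\pi2+\theta_n\in(0,\tfrac\pi2)$). Since $\tan$ is convex and increasing on $[0,\tfrac\pi2)$, and $\sum_{i=1}^{n-1}(\tfrac\pi2-\theta_i)\le \tfrac\pi2+\theta_n$ with all summands in $[0,\tfrac\pi2)$, superadditivity of $\tan$ on $[0,\tfrac\pi2)$ (i.e. $\tan(a)+\tan(b)\le\tan(a+b)$ for $a,b,a+b\in[0,\tfrac\pi2)$, applied inductively) yields
\[
\sum_{i=1}^{n-1}\frac1{\kappa_i} = \sum_{i=1}^{n-1}\tan\!\big(\tfrac\pi2-\theta_i\big) \le \tan\!\Big(\sum_{i=1}^{n-1}\big(\tfrac\pi2-\theta_i\big)\Big) \le \tan\!\big(\tfrac\pi2+\theta_n\big) = -\frac1{\kappa_n},
\]
which is exactly $\sum_{i=1}^n \tfrac1{\kappa_i}\le 0$. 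The main obstacle is making the superadditivity-of-$\tan$ step airtight: one must check that all partial sums $\sum_{i=1}^{k}(\tfrac\pi2-\theta_i)$ stay in $[0,\tfrac\pi2)$ so that the inductive application of $\tan(a)+\tan(b)\le\tan(a+b)$ is legitimate — this follows since the full sum is already $<\tfrac\pi2$ and each term is nonnegative. (In the degenerate case $\kappa_{n-1}=0$ one has $\theta_{n-1}=0$, $1/\kappa_{n-1}=+\infty$; but then $\kappa\in\Gamma_{n-1}$ with $\sigma_{n-1}=0$ is a boundary case excluded or handled by continuity, and in any event $\Theta\ge\tfrac{(n-2)\pi}{2}$ forces $\theta_{n-1}>0$ strictly when $\theta_n<0$, so $1/\kappa_{n-1}$ is finite.)
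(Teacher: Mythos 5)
Your proposal is correct and is essentially the paper's own argument: both pass to the angles $\theta_i=\arctan\kappa_i$, reduce the claim to $\sum_{i=1}^{n-1}\tan(\tfrac{\pi}{2}-\theta_i)\le\tan(\tfrac{\pi}{2}+\theta_n)=-1/\kappa_n$, and establish it via the superadditivity of $\tan$ on $[0,\tfrac{\pi}{2})$ (the paper phrases this as iterating the tangent addition formula). The domain bookkeeping you flag — that all partial sums stay below $\tfrac{\pi}{2}$ and that $\theta_i>0$ for $i<n$ — is exactly the inequality $\tfrac{\pi}{2}>\tfrac{\pi}{2}+\theta_n\ge\sum_{i=1}^{n-1}(\tfrac{\pi}{2}-\theta_i)>0$ recorded in the paper, so no gap remains.
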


\begin{proof}
Let us denote 
\begin{equation*}
    \theta_i =\arctan \kappa_i.
\end{equation*}
Thus our equation \eqref{eq:specialLag} is 
\begin{equation*}
    \sum\limits_i \theta_i =\Theta \geq \frac{(n-2)\pi}{2}.
\end{equation*}
When $\Theta\geq \frac{(n-2)\pi}{2}$ and $\kappa_n<0$, we have 
\begin{equation*}
    \frac{\pi}{2} > \frac{\pi}{2} +\theta_n \geq  \sum\limits^{n-1}_{i=1}(\frac{\pi}{2} -\theta_i)>0.
\end{equation*}
By an elementary identity for $\tan$ function, we have 
\begin{eqnarray*}
    \tan \sum\limits^{n-1}_{i=1}(\frac{\pi}{2} -\theta_i) &=&\frac{\tan (\frac{\pi}{2}-\theta_1) + \tan \sum\limits^{n-1}_{i=2}(\frac{\pi}{2} -\theta_i )}{1-\tan (\frac{\pi}{2}-\theta_1) \tan \sum\limits^{n-1}_{i=2}(\frac{\pi}{2} -\theta_i )}\\
    &\geq& \tan (\frac{\pi}{2}-\theta_1) + \tan \sum^{n-1}_{i=2}(\frac{\pi}{2} -\theta_i )\\
    & \geq& \vdots\\
    &\geq& \sum\limits^{n-1}_{i=1} \tan(\frac{\pi}{2}-\theta_i).
\end{eqnarray*}    
Thus 
\begin{equation*}
    -\frac{1}{\kappa_n}=\tan (\frac{\pi}{2} + \theta_n)  \geq \tan  \sum\limits^{n-1}_{i=1}(\frac{\pi}{2} -\theta_i)\geq \sum\limits^{n-1}_{i=1} \tan(\frac{\pi}{2}-\theta_i) = \sum\limits^{n-1}_{i=1} \frac{1}{\kappa_i}.
\end{equation*}
So we have 
\begin{equation*}
    \sum^n_{i=1}\frac1{\kappa_i}\le 0.
\end{equation*}   
\end{proof}

Let us consider the quantity of $b(x):=\log (H+J)$, where $J=J(n)$ is a given constant.
\begin{theorem}
\label{lem3} For admissible solutions of the equations (\ref{eq:specialLag})
in $\mathbb{R}^{n}$ with $\Theta \geq \frac{(n-2)\pi}{2}$ or $M$ is convex, we have
\begin{eqnarray}
\triangle_{G}b & \geq  \epsilon(n)\lvert\nabla_{G}b\rvert^{2}+\frac{|A|^2 G^{ij}h_{ij}-H G^{ij}h_{ik} \delta^{kl} h_{lj}}{J+H}.\label{eq:logb1}
\end{eqnarray}
Moreover, if $\Theta =\frac{(n-2)\pi}{2}$ or $M$ is convex, we have 
\begin{eqnarray}
\triangle_{G}b & \geq  \epsilon(n)\lvert\nabla_{G}b\rvert^{2}-n.\label{eq:logb}
\end{eqnarray}
\end{theorem}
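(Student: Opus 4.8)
The plan is to compute $\triangle_G b$ directly, using $\triangle_G f = G^{ij}f_{ij}$ for functions on $M$ from the lemma above, together with the evolution of $H = \sum_i h_{ii}$ along second covariant derivatives. First I would set $b = \log(H+J)$ and write
\begin{equation*}
\triangle_G b = \frac{G^{ij}(H+J)_{ij}}{H+J} - \frac{G^{ij}(H+J)_i(H+J)_j}{(H+J)^2} = \frac{G^{ij}H_{ij}}{H+J} - |\nabla_G b|^2 .
\end{equation*}
So everything reduces to estimating $G^{ij}H_{ij}$ from below. For $G^{ij}H_{ij}$ I would differentiate the equation $F(\kappa) = 0$ twice: differentiating once gives $F^{ij}h_{ijk} = 0$, and differentiating again and tracing against $G^{ij} = F^{ij}/V$ produces a term $G^{ij}H_{ij}$ plus a curvature/commutator correction (via Codazzi and the commutator formula \eqref{eq:commute2}, which exchanges $h_{iikk}$ with $h_{kkii}$ at the cost of the Gauss-equation term $h_{im}h_{mi}h_{kk} - h_{mk}^2 h_{ii}$) plus a negative-definite quadratic in $\nabla h$ coming from the second derivative of $F$ (concavity of $F$ in the relevant directions). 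Packaging the Gauss-equation term gives exactly $|A|^2 G^{ij}h_{ij} - H\, G^{ij}h_{ik}\delta^{kl}h_{lj}$ divided by $H+J$, which is the second term in \eqref{eq:logb1}.

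The remaining work is to absorb the third-order junk into $\epsilon(n)|\nabla_G b|^2$. Here $|\nabla_G b|^2 = (H+J)^{-2} G^{ij}H_iH_j$, and $H_i = \sum_k h_{kki}$; the negative quadratic from $D^2F$ together with the concavity inequality for $F^{ij}$ must dominate $\epsilon(n)(H+J)^{-2}G^{ij}H_iH_j$. This is the standard "third-order term beats gradient term" mechanism of Warren–Yuan and Wang–Yuan, and I expect to invoke the diagonalization Lemma \ref{Diag}: after diagonalizing $h$ at a point, the inequality that must hold is of the form $\sum a_\alpha^2 x_\alpha^2 - (\sum b_\alpha x_\alpha)^2 \ge 0$ in the components of $\nabla h$, which by Lemma \ref{Diag} is equivalent to $1 - \sum b_\alpha^2/a_\alpha^2 \ge 0$, and the latter is a finite algebraic inequality in $\kappa$ alone that holds with a dimensional constant. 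Choosing $J = J(n)$ large enough is what creates the slack needed to make this work uniformly.

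For the second, cleaner inequality \eqref{eq:logb}: once \eqref{eq:logb1} is established, I only need a lower bound on the curvature term $\frac{|A|^2 G^{ij}h_{ij} - H\, G^{ij}h_{ik}\delta^{kl}h_{lj}}{H+J}$ of the form $\ge -n$. Writing $G^{ij} = F^{ij}/V$ and using $G_{ij} = g_{ij} + h_i^k h_{kj}$, one has $G^{ij}h_{ik}\delta^{kl}h_{lj} = G^{ij}(G_{ij} - g_{ij}) = n - G^{ij}g_{ij}$, and $G^{ij}h_{ij}$ relates to $\sum_i \kappa_i/(1+\kappa_i^2)$ in an orthonormal frame. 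In the convex case all $\kappa_i > 0$ so $G^{ij}h_{ij} \ge 0$ and the whole numerator is controlled; in the critical phase case $\Theta = \frac{(n-2)\pi}{2}$ I would invoke Lemma \ref{kappa_iG_ii} (which gives $\sum_i \kappa_i/(1+\kappa_i^2) \ge 0$, i.e. $G^{ij}h_{ij} \ge 0$) so that $|A|^2 G^{ij}h_{ij} \ge 0$, leaving $-H\, G^{ij}h_{ik}\delta^{kl}h_{lj}/(H+J) \ge -H\cdot n/(H+J) \ge -n$ after discarding the favorable $G^{ij}g_{ij}$ piece. The main obstacle is the first step: correctly organizing the twice-differentiated equation so that the third-order remainder is genuinely a negative quadratic that Lemma \ref{Diag} can handle — in particular verifying that in the critical or convex regime the algebraic inequality $1 - \sum b_\alpha^2/a_\alpha^2 \ge 0$ really does hold, which is precisely where the supercritical case breaks down (as the authors note, $G^{ij}h_{ij}$ can go negative there).
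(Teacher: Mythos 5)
Your skeleton matches the paper's: write $\triangle_G b = \frac{G^{ij}H_{ij}}{H+J} - |\nabla_G b|^2$, differentiate the equation twice, use the commutator formula \eqref{eq:commute2} to produce the curvature term $|A|^2 G^{ij}h_{ij} - H\,G^{ij}h_{ik}\delta^{kl}h_{lj}$, and your derivation of \eqref{eq:logb} from \eqref{eq:logb1} via $G^{ij}h_{ik}\delta^{kl}h_{lj} = n - G^{ij}g_{ij} \le n$ together with $G^{ij}h_{ij}\ge 0$ (Lemma \ref{kappa_iG_ii} in the critical phase, trivially in the convex case) is exactly what the paper does at the end. But the proposal has a genuine gap where the theorem's actual content lies: you never verify that the third-order quadratic dominates $(1+\epsilon)\,G^{ii}(\sum_k h_{kki})^2/(H+J)$; you only "expect to invoke" Lemma \ref{Diag}. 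The term produced by $\nabla_k G^{ij}h_{ijk}$ is $\sum_k G^{ii}G^{jj}(\kappa_i+\kappa_j)h_{ijk}^2$ (with a \emph{plus} sign in $G^{ij}H_{ij}$), and it is \emph{not} a sign-definite quadratic coming from concavity of $F$: the diagonal contributions $2(G^{nn})^2\kappa_n h_{nn\gamma}^2$ are negative when $\kappa_n<0$. A direct application of Lemma \ref{Diag} to this form, as you describe, would fail.

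The missing mechanism is that one must first use the once-differentiated equation $G^{ij}h_{ij\gamma}=0$ as a linear \emph{constraint} to eliminate $h_{nn\gamma} = -\sum_{i<n}\tau^i_n h_{ii\gamma}$ from the quadratic, and only then apply the diagonalization lemma; the resulting algebraic inequalities $1-\sum b_\alpha^2/a_\alpha^2\ge 0$ are not "finite inequalities that hold with a dimensional constant" in any obvious way — establishing them occupies most of the paper's proof and requires a case analysis over $\gamma=n$, $\gamma\le n-2$, $\gamma=n-1$ and over $\kappa_n<0$ versus $\kappa_n\ge 0$, the inputs $\sum_i 1/\kappa_i\le 0$ (Lemma \ref{inverseKappa}), $\kappa_i+(n-i)\kappa_n\ge 0$ (inequality \eqref{eq:kappan-1}), the substitution $\kappa_i=(1+t_i)|\kappa_n|$ with $\sum t_i\ge(n-1)(n-2)$, and the explicit choices $J\ge 4n^3$, $\epsilon\le \frac{1}{17}$. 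This is precisely the step that breaks in the supercritical case, so it cannot be waved through: as written, your proposal is an accurate plan but not a proof.
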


\begin{proof}
In order to prove \eqref{eq:logb1} and \eqref{eq:logb}, we need to find a positive $\epsilon = \epsilon(n)$ such that the following quantity $Q(\epsilon)$ has a lower bound $-n (H+J)$:
\begin{eqnarray}
Q(\epsilon):&=& (H+J) ( \triangle_{G}b - \epsilon  \lvert\nabla_{G}b\rvert^{2} )\nonumber\\
& =&(H+J) G^{ij}b_{ij}- (H+J) \epsilon G^{ij}b_{i}b_{j} \nonumber \\
& = & G^{ij} H_{ij}-(1+\epsilon)\frac{G^{ij}H_{i}H_{j}}{H+J}. \label{eq:bii} 
\end{eqnarray}
We differentiate the equation \eqref{eq:specialLag}  once 
\begin{eqnarray}
G^{ij}h_{ijk}=\Theta_k=0.\label{G1}
\end{eqnarray}
Then, differentiating twice gives:
\begin{equation}
G^{ij}h_{ijkk}+\nabla_{k}G^{ij}h_{ijk} = \Theta_{kk}=0. \label{2ndG}
\end{equation}
Because 
\begin{eqnarray*}
\nabla_{k}G^{ij} & = & -G^{il}\nabla_{k}G_{lp}G^{pj}\\
 & = & -G^{il}(h_{lqk}\delta^{qq\prime} h_{q^\prime p}+h_{lq}\delta^{q q\prime} h_{q\prime pk})G^{pj}.
\end{eqnarray*}
Then, assuming at the point where $h_{ij}$ is diagonal and $\kappa_i = h_{ii}$, we have
\begin{equation*}
    G^{ij}= \frac{\delta_{ij}}{1+\kappa^2_i}
\end{equation*}
and 
\begin{eqnarray}
\nabla_{k}G^{ij}h_{ijk} & = & -G^{ii}(h_{ijk}h_{jj}+h_{ii}h_{ijk})G^{jj}h_{ijk} \nonumber\\
 & = & -G^{ii}G^{jj}(\kappa_{i}+\kappa_{j})h_{ijk}^{2}. \label{DeG}
\end{eqnarray}
Using \eqref{eq:commute2}, \eqref{2ndG} and \eqref{DeG}, the first part of (\ref{eq:bii}) is 
\begin{eqnarray*}
 G^{ij}H_{ij} & = &|A|^2 G^{ij}h_{ij}-H G^{ij}h_{ik} \delta^{kl} h_{lj} + \sum_k G^{ii}G^{jj}(\kappa_{i}+\kappa_{j})h_{ijk}^{2}.
\end{eqnarray*}
Thus we have
\begin{equation*}
    Q(\epsilon)=G^{ii}G^{jj}(\kappa_{i}+\kappa_{j})h_{ijk}^{2} - (1+\epsilon) G^{ii} \frac{(\sum\limits_k h_{kki})^2}{H+J}+|A|^2 G^{ij}h_{ij}-H G^{ij}h_{ik} \delta^{kl} h_{lj}.
\end{equation*}
Because $\Theta \geq \frac{(n-2)\pi}{2}$ or $M$ is convex, we have for $\forall$ $i\neq j$
\begin{equation*}
    \kappa_i+\kappa_j \geq 0.
\end{equation*}
Then, we have:
\begin{eqnarray*}
    Q(\epsilon)\geq & \sum\limits_{\gamma} \big[ \sum\limits_{i\neq \gamma} 2 (G^{ii})^2 \kappa_{i} h_{ii\gamma}^{2} + 2 (G^{\gamma\gamma})^2 \kappa_{\gamma} h_{\gamma \gamma \gamma}^{2}+ 2\sum\limits_{i\neq \gamma} G^{ii}G^{\gamma \gamma}(\kappa_{i}+\kappa_{\gamma})h_{i i \gamma}^{2} \\
    & - (1+\epsilon) G^{\gamma \gamma} \frac{(\sum\limits_i h_{ii\gamma})^2}{H+J} \big] +|A|^2 G^{ij}h_{ij}-H G^{ij}h_{ik} \delta^{kl} h_{lj}.
\end{eqnarray*}
The term involving $h_{ii\gamma}$ is denoted as follows: 
\begin{eqnarray*}
Q_{\gamma} & := & \sum_{i\neq \gamma}2[(G^{ii})^{2}\kappa_{i}+G^{\gamma\gamma}G^{ii}(\kappa_{\gamma}+\kappa_{i})]h_{ii\gamma}^{2}+2(G^{\gamma\gamma})^{2}\kappa_{\gamma}h_{\gamma\gamma\gamma}^{2}\\
 &  & -(1+\epsilon)G^{\gamma\gamma}\frac{(\sum\limits _{i} h_{ii\gamma})^{2}}{H+J}.
\end{eqnarray*}
Thus 
\begin{equation*}
    Q(\epsilon) \geq \sum\limits_\gamma Q_\gamma  +|A|^2 G^{ij}h_{ij}-H G^{ij}h_{ik} \delta^{kl} h_{lj}.
\end{equation*}
We will take two steps to estimate $Q(\epsilon)$. In the first step  we can deal with the case when $\Theta \geq \frac{(n-2)\pi}{2}$ or $M$ is convex.  And we will show that the quadratics $Q_\gamma$ are non-negative for each $1\le \gamma\le n$ if we properly choose $J,\epsilon$. In the second step we show that the remaining parts are bounded below by $-n(H+J)$ when $\Theta =\frac{(n-2)\pi}{2}$ or convex case.

Suppose $\kappa_1\geq \kappa_2 \geq \cdots \geq \kappa_n$.
Denote 
\begin{equation*}
    \tau^j_i := \frac{G^{jj}}{G^{ii}},
\end{equation*}
we can infer from the equation for any $\gamma$ from $1$ to $n$ that
\begin{equation}
    h_{nn\gamma} =-\sum^{n-1}_{i=1} \tau^i_n h_{ii\gamma}.\label{h_nngamma}
\end{equation}
\textbf{Case 1.} $\kappa_n<0$. 

\textbf{When }  $\gamma=n$. 
Replacing the expression $h_{nnn}$ in  \eqref{h_nngamma} into $Q_n$, we obtain 
\begin{eqnarray*}
  Q_{n} & = & \sum^{n-1}_{i=1} 2[(G^{ii})^{2}\kappa_{i}+G^{nn}G^{ii}(\kappa_{n}+\kappa_{i})]h_{iin}^{2}+2\kappa_{n}(\sum^{n-1}_{i=1} G^{ii} h_{iin})^{2}\\
 &  & -(1+\epsilon)G^{nn}\frac{[\sum\limits ^{n-1}_{i=1} (1-\tau^i_n)h_{iin}]^{2}}{H+J}.
\end{eqnarray*}
Then we split $Q_n$ into the following two parts:
\begin{equation*}
    Q^{(1)}_n := 2 \sum^{n-1}_{i=1} \kappa_i (G^{ii} h_{iin})^2 + 2 \kappa_{n}(\sum^{n-1}_{i=1} G^{ii} h_{iin})^{2},
\end{equation*}
and
\begin{equation*}
    Q^{(2)}_n := 2 \sum^{n-1}_{i=1} G^{nn} G^{ii}(\kappa_n+\kappa_i) h_{iin}^2 -(1+\epsilon)G^{nn}\frac{[\sum\limits ^{n-1}_{i=1} (1-\tau^i_n)h_{iin}]^{2}}{H+J}.
\end{equation*}
In order to prove $Q^{(1)}_n\geq 0$, we apply the diagonalization Lemma \ref{Diag} by choosing $x_i = G^{ii}h_{iin}$, $a_i = \sqrt{2 \kappa_i}$ and $b_i= \sqrt{-2\kappa_n}$.
Thus $Q^{(1)}_n\geq 0$ is equivalent to 
\begin{equation}
    1+ \sum\limits^{n-1}_{i=1} \frac{\kappa_n}{\kappa_i} \geq 0.\label{Q_n^1}
\end{equation}
The inequality \eqref{Q_n^1} is from Lemma \ref{inverseKappa}.

Next, our goal is to prove $Q^{(2)}_n\ge0$ for some small $\epsilon$. We apply the diagonalization Lemma \ref{Diag} by choosing the following
\begin{eqnarray*}
     x_i & = &h_{iin}\\
     a_i&=&\sqrt{2G^{nn}G^{ii}(\kappa_n+\kappa_i)}\\
     b_i &=& \frac{\sqrt{(1+\epsilon)G^{nn} } (1-\tau^i_n)}{\sqrt{H+J}}.
\end{eqnarray*}
Thus we only need to study the positivity of the following quantity
\begin{eqnarray*}
    P_n &:=& 1-(1+\epsilon)\sum^{n-1}_{i=1}\frac{(1-\tau^i_{n})^2}{2(\kappa_i+\kappa_n)G^{ii}(H+J)} \\
    &\geq&  1-(1+\epsilon)\frac{1}{2H}\sum^{n-1}_{i=1} \frac{(\kappa_i-\kappa_n)^2 (\kappa_i+\kappa_n)}{(1+\kappa^2_i)}\\
    &\geq & 1-(1+\epsilon)\frac1{2H}\sum^{n-1}_{i=1}(\kappa_i-\kappa_n)(1-\frac{\kappa_n^2}{\kappa_i^2}).
\end{eqnarray*} 
Let $\kappa_i=(1+t_i)|\kappa_n|$, then we have
\begin{eqnarray}
    P_n &\geq& 1-(1+\epsilon)\frac{\sum\limits^{n-1}_{i=1} (2+t_i)(1-\frac{1}{(1+t_i)^2})}{2(n-2+\sum\limits^{n-1}_{i=1}t_i)} \nonumber\\
    & = & 1 - (1+\epsilon) \frac{1}{2}[1+\frac{n}{n-2+\sum\limits^{n-1}_{i=1}t_i}- \frac{\sum\limits^{n-1}_{i=1} \frac{2+t_i}{(1+t_i)^2}}{n-2+\sum\limits^{n-1}_{i=1}t_i}]\nonumber\\
     &\geq&  1 - (1+\epsilon) \frac{1}{2}[1+\frac{n-\sum\limits^{n-1}_{i=1} \frac{1}{1+t_{i}}}{n-2+\sum\limits^{n-1}_{i=1}t_i}] \label{n=3.1}\\ 
 & \geq & 1 - (1+\epsilon) \frac{1}{2}[1+\frac{n}{n-2+\sum\limits^{n-1}_{i=1}t_i}]. \label{n=3.2} 
\end{eqnarray}
From $\sum^n_{i=1} \frac{1}{\kappa_i} \leq 0 $, we have $\sum\limits^{n-1}_{i=1}\frac{1}{1+t_i} \leq 1$. \\
Then we claim that
\begin{equation}
    \sum\limits^{n-1}_{i=1} t_i \geq (n-1)(n-2).\label{sumt}
\end{equation}
In fact, by Cauchy-Schwarz inequality we have
\begin{eqnarray*}
    (n-1)^2 &\leq& [\sum\limits^{n-1}_{i=1}(1+t_i)][\sum\limits^{n-1}_{i=1}\frac{1}{1+t_i}]\\
    &\leq & [\sum\limits^{n-1}_{i=1}(1+t_i)]\\
    & = & (n-1)+\sum\limits^{n-1}_{i=1}t_i.
\end{eqnarray*}
If $\sum\limits^{n-1}_{i=1} \frac{1}{1+t_i} \leq \frac{1}{2}$, we have a better estimate
\begin{equation}
    \sum\limits^{n-1}_{i=1} t_i \geq (n-1)(2n-3). \label{sumt_0.5}
\end{equation}
If $n\geq4$, we have from \eqref{n=3.2} and \eqref{sumt}
\begin{eqnarray}
 P_n & \geq & 1 - (1+\epsilon) \frac{1}{2}[1+\frac{n}{n-2+(n-1)(n-2)}]\nonumber\\
 &=& 1-(1+\epsilon) \frac{n-1}{2(n-2)}\nonumber\\
 &\geq & 1 - (1+\epsilon) \frac{3}{4}.\nonumber
\end{eqnarray}
If $n=3$ and $\sum\limits^{n-1}_{i=1} \frac{1}{1+t_i} \geq \frac{1}{2}$, we have from \eqref{n=3.1} and \eqref{sumt} that
\begin{eqnarray*}
      P_n &\geq& 1  - (1+\epsilon) \frac{1}{2}[1+\frac{n-\frac{1}{2}}{n-2+(n-1)(n-2)}]\\
      &=& 1- (1+\epsilon) \frac{1}{2} [1+\frac{\frac{5}{2}}{1+2}]\\
      &=& 1- (1+\epsilon) \frac{11}{12}.
\end{eqnarray*}
If $n=3$ and $\sum\limits^{n-1}_{i=1} \frac{1}{1+t_i} \leq \frac{1}{2}$, we have from \eqref{n=3.2} and \eqref{sumt_0.5} that
\begin{eqnarray*}
    P_n &\geq& 1-\frac{1+\epsilon}{2} [1+\frac{n}{n-2 +(n-1)(2n-3)}]\\
    &=& 1-\frac{1+\epsilon}{2}[1+\frac{3}{1+6}]\\
    &=& 1-\frac{5}{7}(1+\epsilon).
\end{eqnarray*}

%Thus we can choose $\epsilon=10^{-1}$, then a smaller $\epsilon(n)>0$ and %constants $\hat\delta=\hat\delta(n), \delta=\delta(n,|f|_\infty)>0$ such that
%\begin{equation}\label{eq hatQ3_deltan}
%Q_n\ge\hat\delta(n) \sum^{n-1}_{i=1}(\kappa_i+\kappa_n)G^{ii}G^{nn}h^2_{iin} %\ge3\delta \sum^n_{i=1}(H)^{-1}G^{ii}h^2_{iin}.
%\end{equation}
\textbf{When} $\gamma\le n-2$. 

Let $0<\hat \epsilon<1$. We replace the expression $h_{nn\gamma}$ in  \eqref{h_nngamma} into $Q_\gamma$ and split $Q_\gamma$ into two quadratic forms,
\begin{eqnarray*}
    Q^{(1)}_\gamma &:=& (1-\hat \epsilon) \kappa_\gamma (G^{\gamma \gamma})^2 h^2_{\gamma \gamma \gamma} + 2\sum^{n-1}_{i\neq \gamma} (\kappa_i+\tau^\gamma_i \kappa_\gamma) (G^{ii})^2 h^2_{ii\gamma} + 2[\kappa_n+\tau^\gamma_n (\kappa_\gamma + \kappa_n)] (\sum\limits^{n-1}_{i=1} G^{ii}h_{ii\gamma})^2,\\
    Q^{(2)}_\gamma &:=& (1+\hat{\epsilon}) \kappa_\gamma (G^{\gamma \gamma})^2h^2_{\gamma\gamma\gamma} + 2\sum\limits^{n-1}_{i\neq \gamma} \kappa_i G^{ii} G^{\gamma \gamma} h^2_{ii\gamma}-(1+\epsilon) \frac{G^{\gamma\gamma}[\sum\limits^{n-1}_{i=1} (1-\tau^i_n)h_{ii\gamma}]^2}{H+J}.
\end{eqnarray*}
 We need to determine $\hat \epsilon<1$, $\epsilon$ such that $Q^{(1)}_\gamma \geq 0$ and $Q^{(2)}_\gamma \geq 0$. 

 If 
\begin{equation*}
    \kappa_n+(\kappa_\gamma+\kappa_n)\tau_n^\gamma\geq 0,
\end{equation*} 
it is easy to see that 
\begin{equation*}
  Q^{(1)}_\gamma \geq 0.   
\end{equation*}
So we  assume that $\kappa_n+(\kappa_\gamma+\kappa_n)\tau^\gamma
_{n}<0$. 
 Let $x_i= h_{ii\gamma}$ and we choose 
 \begin{eqnarray*}
     a^{(1)}_\gamma &=& \sqrt{(1-\hat{\epsilon})\kappa_\gamma} G^{\gamma \gamma} \\
     a^{(1)}_i &=& \sqrt{2(\kappa_i+\tau^\gamma_i \kappa_\gamma)} G^{ii} \quad for \quad i\neq \gamma \\
     b^{(1)}_i &=& \sqrt{-2[\kappa_n+\tau^\gamma_n(\kappa_\gamma+\kappa_n)]} G^{ii},
 \end{eqnarray*}
 and 
 \begin{eqnarray*}
     a^{(2)}_\gamma &=& \sqrt{(1+\hat{\epsilon})\kappa_\gamma} G^{\gamma \gamma} \\
     a^{(2)}_i &=& \sqrt{2 \kappa_i G^{ii} G^{\gamma \gamma}}  \quad for \quad i\neq \gamma \\
     b^{(2)}_i &=& \frac{\sqrt{(1+\epsilon) G^{\gamma\gamma}}(1-\tau^i_n)}{\sqrt{H+J}}.
 \end{eqnarray*}
 Thus by the diagonalization Lemma \ref{Diag}, $Q^{(1)}_\gamma \geq 0$ is equivalent to the following inequality holds,
 \begin{eqnarray*}
     P_1:= 1+ \frac{2[\kappa_n+\tau^\gamma_n(\kappa_\gamma+\kappa_n)]}{(1-\hat{\epsilon} )\kappa_\gamma} + \sum\limits^{n-1}_{i\neq \gamma} \frac{\kappa_n+\tau^\gamma_n(\kappa_\gamma+\kappa_n)}{\kappa_i +\tau^\gamma_i \kappa_\gamma} \geq 0.
 \end{eqnarray*}
Let $\kappa_i=(1+t_i)|\kappa_n|$, then we have
 \begin{eqnarray*}
     \frac{P_1}{1-\tau^\gamma_n t_\gamma} &=& \frac{1}{1-\tau^\gamma_n t_\gamma}- 2\frac{1}{(1-\hat{\epsilon})(1+t_\gamma)} - \sum\limits^{n-1}_{i\neq\gamma} \frac{1}{1+t_i + \tau^\gamma_i (1+t_\gamma)}\\
     &= & 1  +(\frac{1}{1-\tau^\gamma_n t_\gamma}- 1)-\frac{1}{1+t_\gamma} +[\frac{1}{1+t_\gamma} -\frac{2}{(1-\hat{\epsilon})(1+t_\gamma)}] \\
     & &-\sum\limits^{n-1}_{i\neq\gamma} \frac{1}{1+t_i} +\sum\limits^{n-1}_{i\neq\gamma} [\frac{1}{1+t_i}-\frac{1}{1+t_i+\tau^\gamma_i(1+t_\gamma)}]\\
     &\geq& 1 -\frac{1}{1+t_\gamma}  -\sum\limits^{n-1}_{i\neq\gamma} \frac{1}{1+t_i}\\
     & &+\frac{\tau^\gamma_n t_\gamma}{1-\tau^\gamma_n t_\gamma}-\frac{1+\hat{\epsilon}}{(1-\hat{\epsilon})(1+t_\gamma)}
     +\frac{\tau^\gamma_{n-1}(1+t_\gamma)}{(1+t_{n-1})[1+t_{n-1}+\tau^\gamma_{n-1}(1+t_\gamma)]}.
\end{eqnarray*}
We observe that when $\gamma \leq n-2$
\begin{equation}
    \frac{\tau^\gamma_{n-1}(1+t_\gamma)}{(1+t_{n-1})[1+t_{n-1}+\tau^\gamma_{n-1}(1+t_\gamma)]} \geq \frac{1}{1+t_{n-1} + 1+t_\gamma}.\label{OberP_1}
\end{equation}
Due to 
\begin{eqnarray*}
    1 &\geq& \sum\limits^{n-1}_{i=1} \frac{1}{1+t_i} \\
    \tau^\gamma_{n} & \geq& \frac{\kappa^2_{n}}{\kappa^2_\gamma}=\frac{1}{(1+t_\gamma)^2}
\end{eqnarray*}
and \eqref{OberP_1}, we have 
\begin{eqnarray*}
    \frac{P_1 (1+t_\gamma)}{1-\tau^\gamma_n t_\gamma} &\geq&  \frac{t_\gamma}{(1+t_\gamma) [1-\frac{t_\gamma}{(1+t_\gamma)^2}]} - \frac{1+\hat{\epsilon}}{1-\hat{\epsilon}} +\frac{1}{\frac{1+t_{n-1}}{1+t_\gamma} +1} \\
    &\geq& \frac{t_\gamma (1+t_\gamma)}{1+t_\gamma+t^2_\gamma} - \frac{1+\hat{\epsilon}}{1-\hat{\epsilon}} + \frac{1}{2}.
\end{eqnarray*}
If $n\geq3$, we know from \eqref{eq:kappan-1} that  $t_\gamma \geq 1$. Thus we have 
\begin{equation*}
    \frac{t_\gamma (1+t_\gamma)}{1+t_\gamma+t^2_\gamma} \geq \frac{2}{3}.
\end{equation*}
Then \begin{eqnarray*}
    \frac{P_1 (1+t_\gamma)}{1-\tau^\gamma_n t_\gamma} 
    &\geq & \frac{7}{6} - \frac{1+\hat{\epsilon}}{1-\hat{\epsilon}}.
\end{eqnarray*}
 So we can choose $\hat{\epsilon}\leq \frac{1}{13}$ such that $P_1\geq0$.\\
 And $Q^{(2)}_\gamma \geq 0$ is equivalent to the following inequality holds,
  \begin{eqnarray*}
    P_2: = 1- \frac{(1+\epsilon)(1-\tau^\gamma_n)^2}{(H+J)(1+\hat{\epsilon})\kappa_\gamma G^{\gamma\gamma}} - \sum\limits^{n-1}_{i\neq \gamma} \frac{(1+\epsilon)(1-\tau^i_n)^2}{2(H+J)\kappa_i G^{ii}} \geq 0.
 \end{eqnarray*}
Because for any $i<n$, we have
\begin{equation}
    \frac{(1-\tau_n^i)^2 }{G^{ii} \kappa_i} \leq \kappa_i (1-\frac{\kappa^2_n}{\kappa^2_i}).\label{Kn/ki}
\end{equation}
We have from \eqref{Kn/ki}
\begin{eqnarray*}
    P_2 &\geq& 1-(1+\epsilon)\frac{1}{H}[\frac1{1+\hat\epsilon}\frac{(1-\tau^{\gamma}_{n})^2G_{\gamma\gamma}}{\kappa_\gamma }
+\sum^{n-1}_{i\neq\gamma}\frac{(1-\tau^i_{n})^2G_{ii}}{2\kappa_i }] \\
&\geq &1-(1+\epsilon)\frac1{H}[\frac1{1+\hat\epsilon}\kappa_\gamma(1-\frac{\kappa_n^2}{\kappa_\gamma^2})
+\frac12\sum^{n-1}_{i\neq\gamma}\kappa_i(1-\frac{\kappa_n^2}{\kappa_i^2})].
\end{eqnarray*}
Let $\kappa_i=(1+t_i)|\kappa_n|$, then we have
\begin{eqnarray*}
P_2 &\geq &1-\frac{1+\epsilon}{n-2+\sum^{n-1}_{i=1}t_i}[\frac{1+t_\gamma}{1+\hat\epsilon}(1-\frac1{(1+t_\gamma)^2})
+\sum^{n-1}_{i\neq\gamma}\frac{1+t_i}2(1-\frac1{(1+t_i)^2})]\\
& \geq & 1-\frac{1+\epsilon}{n-2+\sum^{n-1}_{i=1}t_i}[\frac{1+t_\gamma}{1+\hat\epsilon}
+\sum^{n-1}_{i\neq\gamma}\frac{1+t_i}{2} -\frac{1}{2(1+t_{n-1})}]\\
&=& 1 -\frac{1+\epsilon}{1+\hat{\epsilon}} \frac{2(1+t_\gamma)+(1+\hat{\epsilon})[n-2+\sum\limits^{n-1}_{i\neq \gamma} t_i-\frac{1}{1+t_{n-1}}]}{2(n-2+\sum\limits^{n-1}_{i=1} t_i)}.\\
\end{eqnarray*}
Because
\begin{equation*}
    \frac{1}{1+t_{n-1}}  \geq 1-t_{n-1},
\end{equation*}
we obtain
\begin{eqnarray*}
P_2 &\geq & 1 -\frac{1+\epsilon}{1+\hat{\epsilon}} \frac{2(1+t_\gamma)+(1+\hat{\epsilon})[n-2+\sum\limits^{n-1}_{i\neq \gamma} t_i-1+t_{n-1}]}{2(n-2+\sum\limits^{n-1}_{i=1} t_i)}.\\
\end{eqnarray*}
If $n\geq 3$ and $\gamma \leq n-2$, we observe that 
\begin{equation*}
    \frac{1}{2+\hat{\epsilon}} \frac{2(1+t_\gamma)+(1+\hat{\epsilon})[n-2+\sum\limits^{n-1}_{i\neq \gamma} t_i -1+t_{n-1}]}{n-2+\sum\limits^{n-1}_{i=1} t_i} <1.
\end{equation*}
Thus if $\hat\epsilon=4\epsilon \leq 2$, we have
\begin{equation*}
    P_2 \geq 1 -\frac{(1+\epsilon)(1+\frac{\hat{\epsilon}}{2})}{1+\hat{\epsilon}}\geq 0.
\end{equation*}

%Thus for a smaller $\epsilon(n)>0$, we have constants $\hat\delta(n)$, %$A=A(n,|f|_\infty)$, $\delta=\delta(n,|f|_\infty)>0$ such that,
%\begin{equation}\label{eq hatQ1_deltan}
%Q_\gamma\ge\hat\delta(n) \sum^{n-1}_{i=1}|\kappa_i|g^{ii}g^{ii}u^2_{ii1} $\ge3\delta \sum^n_{i=1}(A+H)^{-1}g^{ii}u^2_{ii1}.
%\end{equation}

\textbf{When} $\gamma=n-1$.\\

Replacing the expression $h_{nn\gamma}$ in  \eqref{h_nngamma} into $Q_\gamma$, we split $Q_\gamma$ into two parts and consider the following two quadratic forms with new $\hat \epsilon$,
\begin{eqnarray*}
    \hat{Q}^{(1)}_\gamma &:=& (1+\hat \epsilon) \kappa_\gamma (G^{\gamma \gamma})^2 h^2_{\gamma \gamma \gamma} + 2\sum^{n-1}_{i\neq \gamma} (\kappa_i+\tau^\gamma_i \kappa_\gamma) (G^{ii})^2 h^2_{ii\gamma} + 2[\kappa_n+\tau^\gamma_n (\kappa_\gamma + \kappa_n)] (\sum\limits^{n-1}_{i=1} G^{ii}h_{ii\gamma})^2,\\
    \hat{Q}^{(2)}_\gamma &:=& (1-\hat{\epsilon}) \kappa_\gamma (G^{\gamma \gamma})^2h^2_{\gamma\gamma\gamma} + 2\sum\limits^{n-1}_{i\neq \gamma} \kappa_i G^{ii} G^{\gamma \gamma} h^2_{ii\gamma}-(1+\epsilon) \frac{G^{\gamma\gamma}[\sum\limits^{n-1}_{i=1} (1-\tau^i_n)h_{ii\gamma}]^2}{H+J}.
\end{eqnarray*}
Without loss of generality, we also assume that $\kappa_n+(\kappa_\gamma+\kappa_n)\tau^\gamma_{n}<0$.
Let $x_i= h_{ii\gamma}$ and we choose 
 \begin{eqnarray*}
     a^{(1)}_\gamma &=& \sqrt{(1+\hat{\epsilon})\kappa_\gamma} G^{\gamma \gamma} \\
     a^{(1)}_i &=& \sqrt{2(\kappa_i+\tau^\gamma_i \kappa_\gamma)} G^{ii} \quad for \quad i\neq \gamma \\
     b^{(1)}_i &=& \sqrt{-2[\kappa_n+\tau^\gamma_n(\kappa_\gamma+\kappa_n)]} G^{ii},
 \end{eqnarray*}
 and 
 \begin{eqnarray*}
     a^{(2)}_\gamma &=& \sqrt{(1-\hat{\epsilon})\kappa_\gamma} G^{\gamma \gamma} \\
     a^{(2)}_i &=& \sqrt{2 \kappa_i G^{ii} G^{\gamma \gamma}}  \quad for \quad i\neq \gamma \\
     b^{(2)}_i &=& \frac{\sqrt{(1+\epsilon) G^{\gamma\gamma}}(1-\tau^i_n)}{\sqrt{H+J}}.
 \end{eqnarray*}
Thus by the diagonalization Lemma \ref{Diag},  $\hat Q^{(1)}_\gamma\ge 0,\hat Q^{(2)}_\gamma\ge 0$ is equivalent to the following two inequalities hold
 \begin{eqnarray*}
    \hat{P}_1:= 1+ \frac{2[\kappa_n+\tau^\gamma_n(\kappa_\gamma+\kappa_n)]}{(1+\hat{\epsilon} )\kappa_\gamma} + \sum\limits^{n-2}_{i= 1} \frac{\kappa_n+\tau^\gamma_n(\kappa_\gamma+\kappa_n)}{\kappa_i +\tau^\gamma_i \kappa_\gamma} \geq 0
 \end{eqnarray*}
 and 
  \begin{eqnarray*}
     \hat{P}_2 := 1- \frac{(1+\epsilon)(1-\tau^\gamma_n)^2}{(H+J)(1-\hat{\epsilon})\kappa_\gamma G^{\gamma\gamma}} - \sum\limits^{n-2}_{i=1} \frac{(1+\epsilon)(1-\tau^i_n)^2}{2(H+J)\kappa_i G^{ii}} \geq 0.
 \end{eqnarray*}
By Lemma \ref{Diag}, $\hat{P}_1 \geq 0$ is equivalent to the following inequality:
 \begin{eqnarray*}
 \frac{\hat{P}_1}{\kappa_n+\tau^\gamma_n(\kappa_\gamma + \kappa_n)} & = &   \frac{1}{\kappa_n+\tau^\gamma_n(\kappa_\gamma + \kappa_n)}+ \frac{2}{(1+\hat{\epsilon})\kappa_\gamma}+\sum\limits^{n-2}_{i=1} \frac{1}{\kappa_i + \tau^\gamma_i \kappa_\gamma} \\
 &=& \sum^n_{i=1} \frac{1}{\kappa_i}
  -\frac{\tau^\gamma_n(\kappa_\gamma +\kappa_n)}{[\kappa_n +\tau^\gamma_n (\kappa_\gamma +\kappa_n)]\kappa_n} +\frac{1-\hat{\epsilon}}{(1+\hat{\epsilon})\kappa_\gamma} -\sum\limits^{n-2}_{i=1} \frac{\tau^\gamma_i \kappa_\gamma}{\kappa_i(\kappa_i + \tau^\gamma_i \kappa_\gamma)} \leq 0.
 \end{eqnarray*}
 Due to $ \sum\limits^{n}_{i=1} \frac{1}{\kappa_i} \leq 0$, we have
 \begin{eqnarray*}
    \frac{\kappa_\gamma \hat{P}_1}{\kappa_n+\tau^\gamma_n(\kappa_\gamma + \kappa_n)} & \leq & -\frac{\tau^\gamma_n (\kappa_\gamma + \kappa_n)\kappa_\gamma}{[\kappa_n+\tau^\gamma_n(\kappa_\gamma+\kappa_n)]\kappa_n} + \frac{1-\hat{\epsilon}}{1+\hat{\epsilon}}\\
    & \leq & -\frac{\kappa^2_n (\kappa_\gamma+\kappa_n)}{\kappa_\gamma [\kappa_n + \frac{\kappa^2_n}{\kappa^2_\gamma}(\kappa_\gamma+\kappa_n)]\kappa_n} +\frac{1-\hat{\epsilon}}{1+\hat{\epsilon}}.
 \end{eqnarray*}
 The last inequality holds because $\tau^\gamma_n \geq \frac{\kappa^2_n}{\kappa^2_\gamma}$.
 Thus we choose $\hat \epsilon=\frac{\kappa_n^2}{\kappa_\gamma^2}$,
 \begin{eqnarray*}
      \frac{\kappa_\gamma \hat{P}_1}{\kappa_n+\tau^\gamma_n(\kappa_\gamma + \kappa_n)} 
     &\leq & -\frac{ 1+\frac{\kappa_n}{\kappa_\gamma}}{ 1 + \frac{\kappa_n}{\kappa_\gamma}(1+\frac{\kappa_n}{\kappa_\gamma})} +\frac{1-\frac{\kappa^2_n}{\kappa^2_\gamma}}{1+\frac{\kappa^2_n}{\kappa^2_\gamma}}\\
     &\leq & 0.
 \end{eqnarray*}
By \eqref{Kn/ki}, we have 
\begin{eqnarray*}
    \hat{P}_2 &\geq& 1-(1+\epsilon)\frac{1}{H}[\frac1{1-\hat\epsilon}\frac{(1-\tau^\gamma_{n})^2G_{\gamma\gamma}}{\kappa_\gamma }+ \frac1{2}\sum^{n-2}_{i=1}\frac{(1-\tau^i_{n})^2G_{ii}}{\kappa_i}]\\
    & \geq & 1- \frac{1+\epsilon}{H} [\frac{1}{1-\hat{\epsilon}}\kappa_\gamma (1-\frac{\kappa^2_n}{\kappa^2_\gamma}) +\frac{1}{2}\sum\limits^{n-2}_{i=1} \kappa_i (1-\frac{\kappa^2_n}{\kappa^2_i})].
\end{eqnarray*}
Let $\kappa_i=(1+t_i)|\kappa_n|$, $\hat \epsilon=\frac{\kappa_n^2}{\kappa_\gamma^2}$ and $\gamma =n-1$, then we have
\begin{eqnarray*}
    \hat{P}_2 &\geq & 1 -\frac{1+\epsilon}{n-2+\sum\limits^{n-1}_{i=1} t_i} [1+t_{n-1} + \frac{1}{2} \sum\limits^{n-2}_{i=1} (1+t_i-\frac{1}{1+t_i})] .
\end{eqnarray*}
If $n\geq4$, we observe that 
\begin{equation*}
    \frac{1}{n-2+\sum\limits^{n-1}_{i=1} t_i} [1+t_{n-1}+ \frac{1}{2} \sum\limits^{n-2}_{i=1} (1+t_i)] \leq \frac{3}{4}.
\end{equation*}
Thus we have
\begin{equation*}
    \hat{P}_2 \geq 1-\frac{3(1+\epsilon)}{4}.
\end{equation*}
If $n=3$ and $t_1 \leq 2$, we have 
\begin{eqnarray*}
    \hat{P}_2 &\geq & 1 -\frac{1+\epsilon}{1+ t_1+t_2} [1+t_2 + \frac{1}{2}  (1+t_1-\frac{1}{1+t_1})] \\
    & \geq & 1 -\frac{1+\epsilon}{1+ t_1+t_2} [1+t_2 + \frac{1}{2}  (t_1+\frac{2}{3})].
\end{eqnarray*}
Because $t_1\geq \max \{ t_2,1 \}$, we have that 
\begin{equation*}
    \frac{1}{1+t_1+t_2} [1+t_2 + \frac{1}{2} (t_1+\frac{2}{3})] \leq \frac{17}{18}.
\end{equation*}
Thus 
\begin{eqnarray*}
    \hat{P}_2  & \geq & 1 -\frac{17}{18}(1+\epsilon).
\end{eqnarray*}
If $n=3$ and $t_1 \geq 2$, we have that
\begin{equation*}
    \frac{1}{1+t_1+t_2} [1+t_2 + \frac{1}{2} (t_1+1)] \leq \frac{9}{10}.
\end{equation*}
Thus we obtain
\begin{eqnarray*}
    \hat{P}_2  & \geq & 1 -\frac{9}{10}(1+\epsilon).
\end{eqnarray*}
%We can then choose an even smaller $\epsilon(n)$ such that for some $\delta=\delta(n)>0$,
%Thus for a smaller $\epsilon=\epsilon(n)$ and $\delta=\delta(n,|f|_\infty)>0$,
%\begin{equation}\label{eq hatQ2_deltan}
%Q_\gamma\ge \hat\delta(n) [\sum^{n-%1}_{i\neq\gamma}\kappa_ig^{ii}g^{\gamma\gamma}u^2_{ii\gamma}
%+\kappa_\gamma(1-\frac{\kappa_n^2}%{\kappa_\gamma^2})g^{\gamma\gamma}g^{\gamma\gamma}u^2_{\gamma\gamma\gamma}]\ge %3\delta(A+H)^{-1}\sum^n_{i=1}g^{ii}u^2_{ii\gamma}.
%\end{equation}
%By (\ref{eq hatQ3_deltan}) (\ref{eq hatQ1_deltan}) (\ref{eq hatQ2_deltan}) we get for $\epsilon=\epsilon(n)$, $A=A(n,|f|_\infty)>0$, $\delta=\delta(n,|f|_\infty)>0$,
%\begin{equation}\label{eq_QGamma_nCri}
    %Q_\gamma\ge3\delta H^{-1}\sum^{n}_{i=1}G^{ii}h^2_{ii\gamma}.
%\end{equation}
\textbf{Case 2.} $\kappa_n\ge0$. \\

%Recall we have $\sum^n_{i=1} G^{ii}h_{ii\gamma}=0$ by reducing the derivative of $f$ to $h_{ii\gamma}$, 
We consider the quadratic form
$$
Q_\gamma=\sum^{n}_{i=1}
2\kappa_i(J+H) G^{ii}h_{ii\gamma}^2
-(1+\epsilon)(\sum^{n}_{i=1}h_{ii\gamma})^2.
$$
If $\kappa_{n}\ge 1$, to verify $Q_\gamma\ge0$ we only need $\epsilon \leq 1$ and $J\geq n$ such that
\begin{align*}
&1-(1+\epsilon)\sum^{n}_{i=1}\frac{G_{ii}}{2\kappa_i(J+H)}\ge 1-\frac{1+\epsilon}{2(J+H)}\sum^{n}_{i=1}(\kappa_i+1 )\ge 1-\frac{1+\epsilon}2\ge0.
\end{align*}
If $\kappa_{1}\leq 1$, we verify directly that for $J\geq n$. Due to the equation $G^{ii}h_{ii\gamma} =0$, we have 
\begin{align*}
&(\sum^{n}_{i=1}h_{ii\gamma})^2=[\sum^{n}_{i=1}(h_{ii\gamma}-G^{ii}h_{ii\gamma})]^2\le n  \sum^{n}_{i=1}\kappa_iG^{ii}h^2_{ii\gamma}
\le \sum^{n}_{i=1}
\kappa_i(J+H) G^{ii}h_{ii\gamma}^2.
\end{align*}
In this case, we also have $Q_\gamma \geq 0$. \\
Otherwise $\kappa_n< 1$ and $\kappa_1> 1$, we split $\{\kappa_i\}^n_{i=1}$ into two parts,
$$
\left\{
\begin{array}{ll}
 (\kappa_1,\cdots,\kappa_k)    & \kappa_k\ge 1   \\
(\kappa_{k+1},\cdots ,\kappa_{n})   &  \kappa_{k+1}< 1     \\
\end{array} \right.
$$
where $k\le n-1$. \\
By the above assumption, we have 
\begin{eqnarray*}
\sum^k_{i=1}\frac12\kappa_i(J+H)G^{ii}h^2_{ii\gamma} &\geq& \sum^k_{i=1}\frac{\kappa_i J}{2(1+\kappa^2_i)}h^2_{ii\gamma} \\
&\geq & J \sum\limits^k_{i=1} \frac{h^2_{ii\gamma}}{(1+\kappa^2_i)^2}. 
\end{eqnarray*}
Then by Cauchy-Schwarz inequality and our equation $G^{ii}h_{ii\gamma} =0$, we have
\begin{eqnarray*}
\sum^k_{i=1}\frac{1}{2}\kappa_i(J+H)G^{ii}h^2_{ii\gamma}
&\geq & \frac{J}{n} (\sum\limits^k_{i=1} \frac{h_{ii\gamma}}{1+\kappa^2_i})^2 \\
& =&\frac{J}{n} (\sum\limits^n_{i=k+1} \frac{h_{ii\gamma}}{1+\kappa^2_i})^2\\
&\geq& \frac{J}{4n} (\sum\limits^n_{i=k+1} h_{ii\gamma})^2 . 
\end{eqnarray*}
Thus if we choose $J\geq 4n^3$, we have
\begin{equation*} \sum^k_{i=1}\frac{1}{2}\kappa_i(J+H)G^{ii}h^2_{ii\gamma} \geq n^2 (\sum\limits^n_{i=k+1} h_{ii\gamma})^2.
\end{equation*}
We get
\begin{equation*}
   Q_\gamma \ge \sum^{k}_{i=1}
\frac32\kappa_i(J+H) G^{ii}h_{ii\gamma}^2 +n^2(\sum^n_{i=k+1}h_{ii\gamma})^2 -(1+\epsilon)(\sum^n_{i=1}h_{ii\gamma})^2.
\end{equation*}
Now by the diagonalization lemma \ref{Diag} with 
\begin{eqnarray*}
    x_{k+1} & =& 
    \sum^n_{i=k+1}h_{ii\gamma}\\
    a_{k+1} &=& n\\
    x_i & = & h_{ii\gamma}  \quad i\leq k \\
    a_i &=& \sqrt{\frac{3\kappa_i (J+H)G^{ii}}{2}} \quad i\leq k\\
    b_i & = & \sqrt{1+\epsilon} \quad i\leq k+1,
\end{eqnarray*}
we need to estimate as follows:
\begin{align*}
1-(1+\epsilon)(\sum^{k}_{i=1}\frac{2G_{ii}}{3\kappa_i(J+H)}+\frac1{n^2})&\ge 1-\frac23\frac{1+\epsilon}{(J+H)}\sum^{k}_{i=1}(\kappa_i+1 ) - \frac{1+\epsilon}{n^2}\\
&\ge 1-\frac{11}{12}(1+\epsilon)\ge0.
\end{align*}
In all, if we choose $\epsilon \leq \frac{1}{17}$ and $J\geq 4 n^3$ we finishes the first step to get
\begin{equation*}  Q(\epsilon)\ge\sum^n_{\gamma=1}\kappa_\gamma^2\sum^n_{i=1}G^{ii}\kappa_i-H\sum^n_{i=1}G^{ii}\kappa_i^2.
\end{equation*}

When $\Theta =\frac{(n-2)\pi}{2}$, we know from Lemma \ref{kappa_iG_ii} that
\begin{equation*}
    \sum^n_{i=1}\kappa_iG^{ii}\ge 0.
\end{equation*}
For the convex case, it is obvious that $\sum^n_{i=1}\kappa_iG^{ii}\ge 0$. Then, either $\Theta = \frac{(n-2)\pi}{2}$, or $M$ is convex, we have 
\begin{equation*}
Q(\epsilon)\ge\sum^n_{\gamma=1}\kappa_\gamma^2\sum^n_{i=1}G^{ii}\kappa_i-H\sum^n_{i=1}G^{ii}\kappa_i^2\ge -nH.
\end{equation*}

\end{proof}

\section{Mean value inequality.}
Because the graph 
\begin{equation*}
    X^\Sigma=(X,\nu)=(x_{1},\cdots,x_{n},u,\frac{u_{1}}{W},\cdots,\frac{u_{n}}{W},-\frac{1}{W})
\end{equation*}
where $u$ satisfied equation (\ref{eq:specialLag2}) can be viewed
as a $n$ dimensional smooth submanifold in $(\mathbb{R}^{n+1}\times\mathbb{S}^{n},\sum\limits _{i=1}^{n+1}dx_{i}^{2}+i_{\mathbb{S}^{n}}^{*}(\sum\limits _{i=1}^{n+1}dy_{i}^{2}))$
here $i:\mathbb{S}^{n}\hookrightarrow\mathbb{R}^{n+1}$ is the standard
embedding. We observe that it is a submanifold with a bounded
mean curvature. 
In fact, we have 
\begin{eqnarray*}
X_{i}^{\Sigma} & = & (X_{i},\nu_{i})=(X_{i},h_{i}^{k}X_{k}),
\end{eqnarray*}
and 
\[
G_{ij}=<X_{i}^{\Sigma},X_{j}^{\Sigma}>_{\mathbb{R}^{n+1}\times\mathbb{S}^{n}}=g_{ij}+h_{i}^{k}h_{kj}.
\]

\begin{eqnarray*}
\lvert\mathscr{H}_{1}\rvert&=&\lvert<G^{ij}(D_{j}X_{i}^{\Sigma}- 
(\Gamma_{ji}^{k})^{\Sigma}X_{k}^{\Sigma}),\nu_{1}^{\Sigma}>\rvert\\
&\leq&\lvert G^{ij}(D_{j}X_{i}^{\Sigma}-\Gamma_{ji}^{k}X_{k}^{\Sigma})\vert+\lvert<G^{ij}(\Gamma_{ji}^{k}X_{k}^{\Sigma}-(\Gamma_{ji}^{k})^{\Sigma}X_{k}^{\Sigma}),\nu_{1}^{\Sigma}>\rvert\\
&=&\lvert G^{ij}(D_{j}X_{i}^{\Sigma}-\Gamma_{ji}^{k}X_{k}^{\Sigma})\vert
\end{eqnarray*}
where $(\Gamma_{ji}^{k})^{\Sigma}$ and $\Gamma_{ji}^{k}$ are Christoffel symbols corresponding to $G_{ij}$ and $g_{ij}$ and $\nu_{1}^{\Sigma}$ is any one of unit normals of $X^{\Sigma}$. The second inequality is because of $<X_{k}^{\Sigma},\nu_{1}^{\Sigma}>=0$.

So the mean curvature vector can be estimated as following

\begin{eqnarray*}
\mathscr{\lvert H}\rvert&\leq&(n+2)\lvert G^{ij}(D_{j}X_{i}^{\Sigma}-\Gamma_{ji}^{k}X_{k}^{\Sigma})\vert\\
&=&(n+2)\lvert G^{ij} (-h_{ij}\nu,h_{ij}^{k}X_{k}-h_{i}^{k}h_{kj}\nu)\rvert\\
&\leq&(n+2)\sqrt{\sum_{i=1}^{n}\frac{\lvert\kappa_{i}\rvert^{2}+\kappa_{i}^{4}}{(1+\kappa_{i}^{2})^{2}}}\leq C.
\end{eqnarray*}

Then we are going to show the almost subharmonic quantity $b$ on the  submanifold $\Sigma$ to satisfy Michael-Simon's mean value inequality.
\begin{theorem}
\label{thm-meanvalue} Suppose $u$ are admissible solutions of equations
(\ref{eq:specialLag}) on $B_{10}\subset\mathbb{R}^{n}$. If $\Theta =\frac{(n-2)\pi}{2}$ or $M$ is convex, then we have for any $y_{0}\in B_{2}$
\begin{equation*}
\sup_{B_{1}}b=b(y_{0})\leq C\int_{B_{1}(y_{0})}b(x) V dx,\label{eq:meanvalue}
\end{equation*}
where $C$ depends only on $\lVert M\rVert_{C^{1}}$ and $n$.
\end{theorem}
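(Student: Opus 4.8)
The plan is to combine the differential inequality of Theorem \ref{lem3} with a Michael–Simon type mean value inequality on the submanifold $\Sigma=(X,\nu)\subset\mathbb{R}^{n+1}\times\mathbb{S}^n$, whose bounded mean curvature was verified just before the statement. First I would recall that $b=\log(H+J)$ satisfies $\triangle_G b\geq \epsilon(n)|\nabla_G b|^2 - n \geq -n$ on the region under consideration, so in particular $\triangle_G b + n \geq 0$, i.e. $b$ is a subsolution up to the bounded zeroth-order term $n$. Since $\Sigma$ is an $n$-dimensional submanifold with $|\mathscr H|\le C(n,\|M\|_{C^1})$, the Michael–Simon mean value inequality (from \cite{michael1973sobolev}, applied to the nonnegative function $b$ — note $b\geq \log J>0$ if $J$ is chosen appropriately, or one works with $b-\inf b$ which is still an a.e. subsolution) gives, for a ball $B_\rho^\Sigma(p)$ of extrinsic radius $\rho$ centered at $p=(y_0,u(y_0),\dots)$,
\begin{equation*}
b(p)\leq C(n)\Big(\frac{1}{\rho^n}\int_{B_\rho^\Sigma(p)} b \, d\mu_\Sigma + \rho \int_{B_\rho^\Sigma(p)} \big(|\triangle_G b| + |\mathscr H|\, b\big)\, d\mu_\Sigma\Big),
\end{equation*}
where $d\mu_\Sigma=V\,dx$ is the volume element of $\Sigma$ by Lemma \ref{Lem9}. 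The error terms $|\triangle_G b|$ and $|\mathscr H|b$ are problematic only where $\triangle_G b$ could be large and positive, but the positive part of $\triangle_G b$ is controlled: wherever $\triangle_G b \geq 0$ we can only use $\triangle_G b \geq -n$ in reverse, so instead I would use the subharmonicity more carefully — the standard trick is that the mean value inequality for a genuine subsolution needs no error term, and here the "defect" $n + \text{(mean curvature contribution)}$ is bounded, so it only contributes a term $C(n)\rho \int_{B_\rho^\Sigma} (n + Cb)\,d\mu_\Sigma$ which, after absorbing, is dominated by $C\int_{B_1(y_0)} b\, V\,dx$ since $b\geq \log J \geq 1$.

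More precisely, the key steps in order are: (1) fix $J=J(n)$ from Theorem \ref{lem3} large enough that $b=\log(H+J)\geq 1$ on $B_{10}$ wherever the estimate is needed (using $H+J\geq J\geq e$; admissibility gives $H>0$ on the admissible cone, actually $\sigma_1(\kappa)>0$); (2) observe that as a function on $\Sigma$, $b$ satisfies $\triangle_\Sigma b = \triangle_G b + \langle \mathscr H, \nabla^{\mathbb{R}^{n+1}\times\mathbb{S}^n} b\rangle$ — wait, more carefully, $b$ is defined on $M$ hence on $\Sigma$, and $\triangle_\Sigma b = \triangle_G b$ since $\triangle_G$ IS the Laplacian of the induced metric $G$; so by Theorem \ref{lem3}, $\triangle_\Sigma b \geq -n$ on $\Sigma$; (3) apply Michael–Simon's mean value inequality to the nonnegative function $b$ on $\Sigma$ at the point $p$ lying over $y_0\in B_2$, with extrinsic radius $\rho\asymp 1$ chosen so that the extrinsic ball $B_\rho^\Sigma(p)$ projects into $B_1(y_0)\subset\mathbb{R}^n$ (possible since $\Sigma$ is a Lipschitz graph with constant depending on $\|M\|_{C^1}$); (4) estimate the error integral $\rho\int (|\triangle_\Sigma b| + |\mathscr H| b)$: write $|\triangle_\Sigma b| \le \triangle_\Sigma b + 2n$ using $\triangle_\Sigma b\geq -n$ — no, that's wrong in sign — instead split $|\triangle_\Sigma b| = \triangle_\Sigma b + 2(\triangle_\Sigma b)_- \leq \triangle_\Sigma b + 2n$ since $(\triangle_\Sigma b)_-\leq n$; then $\int_{B_\rho^\Sigma} \triangle_\Sigma b\, d\mu_\Sigma$ can be handled by multiplying $b$ by a cutoff before applying the mean value inequality, or by noting $\int \triangle_\Sigma b \le$ (boundary flux) which is dominated by $\int |\nabla_\Sigma b|$, absorbed via Theorem \ref{lem3}'s gradient term and Cauchy–Schwarz; (5) collect: all error contributions are bounded by $C(n,\|M\|_{C^1})\int_{B_1(y_0)} b\, V\, dx$ because $b\geq 1$, giving the claimed bound $b(y_0)\leq C\int_{B_1(y_0)} b V\, dx$.

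The main obstacle is handling the error term in the Michael–Simon inequality coming from $\triangle_\Sigma b$ and the mean curvature — it is not literally a subsolution, only "$\triangle_\Sigma b\geq -n$," which is the wrong sign for a clean mean value inequality. The cleanest fix, which I expect to be the real content of the argument, is the \emph{cutoff trick of Warren–Yuan}: rather than applying the mean value inequality to $b$ directly, test the inequality $\triangle_\Sigma b\geq -n$ (equivalently $\triangle_\Sigma b \geq \epsilon|\nabla_\Sigma b|^2 - n$) against the Michael–Simon Sobolev inequality with a well-chosen power of $b$ times a cutoff $\varphi$, integrate by parts on $\Sigma$, and use the quadratic gradient term $\epsilon|\nabla_\Sigma b|^2$ to absorb both the cutoff error and the lower-order $-n$ term — this is exactly the "modified integral method without Sobolev inequality" advertised in the introduction. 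A delicate point is that $b$ need not be bounded a priori (that is what we are proving), so the test function and the iteration must be set up so that no a priori bound on $\sup b$ enters; the quadratic gradient term $\epsilon|\nabla_G b|^2$ in Theorem \ref{lem3} is precisely what makes this self-improving argument close, in the spirit of the De Giorgi–Nash–Moser / Warren–Yuan iteration. I would therefore carry out the proof by: choosing a logarithmic cutoff on $\Sigma$, integrating $\varphi^2 b^{q}(\triangle_\Sigma b + n)\geq 0$ by parts, using $\epsilon|\nabla_\Sigma b|^2$ to dominate the cross terms, and feeding the resulting $L^p$ bound on $b$ into Michael–Simon's $L^p$–mean-value estimate to conclude the pointwise bound $b(y_0)\lesssim \fint_{B_1^\Sigma(p)} b \lesssim \int_{B_1(y_0)} b V\,dx$.
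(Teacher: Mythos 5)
Your overall strategy coincides with the paper's: combine the Jacobi inequality $\triangle_G b\ge -n$ from Theorem \ref{lem3} with a Michael--Simon type mean value inequality on the bounded-mean-curvature submanifold $\Sigma$, and absorb the constant defect using $b\ge \log J\ge 1$. The difference is in execution. The paper does not cite Michael--Simon as a black box: it re-derives the monotonicity formula by hand in the product ambient space, setting $r^2=|X(x)-X(y_0)|^2+2-2\langle\nu(x),\nu(y_0)\rangle$ (the extrinsic Euclidean distance of $\Sigma$), introducing $\psi(r)=\int_r^\infty t\chi(\rho-t)\,dt$, computing $\triangle_G\psi\le -n\chi+n(r^2+r)\chi+r\chi'$ with the help of the differentiated equation $G^{ij}h_{ijk}=0$ and the key pointwise claim $G^{ij}r_ir_j\le 1$, then pairing with $b$ via Green's identity, using $\triangle_G b\ge -n$, and closing with Gronwall's inequality and $\delta\to 0$. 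This is precisely the mechanism you describe in your first paragraph (the defect contributes $\rho^{-(n+1)}\int\psi\le \rho^{-n}\int b\chi$ because $b\ge 1$), and nothing more is needed. Two points in your write-up would need correction if carried out: (i) the version of the mean value inequality you state with $|\triangle_G b|$ inside the error integral is not usable, since only the negative part of $\triangle_G b$ is controlled --- the correct formulation involves only $(\triangle_G b)_-\le n$, which is exactly what the explicit monotonicity derivation produces; and (ii) your closing guess that the ``real content'' is a Warren--Yuan/Moser iteration exploiting the quadratic term $\epsilon|\nabla_G b|^2$ is a detour the paper does not take for this theorem --- that gradient term is reserved for the later integral estimates in the proof of Theorem \ref{thm:SpecialLag} (bounding $\int\phi^2 G^{ij}b_ib_j\,d\Sigma$), not for the mean value inequality itself.
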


\begin{proof}
For convenience, we also choose an orthonormal frame in order to prove this theorem.
First we know from Theorem \ref{lem3}
\begin{eqnarray*}
G^{ij}b_{ij} & \geq & - n .
\end{eqnarray*}
 Let $\chi$ be a non-negative and non-decreasing function in $C^{1}(\mathbb{R})$
with support in the interval $(0,\infty)$. We set 

\[
\psi(r):=\int_{r}^{\infty}t\chi(\rho-t)dt
\]
where $0<\rho<10,$ and $r^{2}:=|X(x)-X(y_{0})|^{2}+2-2<\nu(x),\nu(y_{0})>$. \\
Let us denote 
\[
\mathfrak{B}_{\rho}=\{x\in B_{10}(y_{0}):\quad |X(x)-X(y_{0})|^{2}+2-2<\nu(x),\nu(y_{0})>\leq\rho^{2}\}.
\]
 We may assume that $(X(y_{0}),\nu(y_{0}))=(0,E_{n+1})$. 
First, we have
\begin{equation}
2rr_{i}=2<X,e_{i}>-2\sum_k h_{ik}<e_{k},E_{n+1}>\label{eq:r1}
\end{equation}
and 
\begin{eqnarray}
2r_{i}r_{j}+2rr_{ij} & = & 2\delta_{ij}-2h_{ij}<X,\nu> \nonumber\\
& & -2\sum_k h_{ijk}<e_{k},E_{n+1}>+2\sum_k h_{ik}h_{kj}<\nu,E_{n+1}>.\label{eq:r2} 
\end{eqnarray}
Now we are going to compute the differential inequality of $\psi$,
\begin{eqnarray*}
G^{ij}\psi_{ij} & = & G^{ij}(-r_{i}r\chi(\rho-r))_{j}\\
 & = & -G^{ij}r_{ij}r\chi(\rho-r)-G^{ij}r_{i}r_{j}\chi(\rho-r)+G^{ij}r_{i}r_{j}r\chi^{\prime}(\rho-r).
\end{eqnarray*}
From (\ref{eq:r1}) and (\ref{eq:r2}), we have 
\begin{eqnarray*}
G^{ij}\psi_{ij} & = & -\chi(\rho-r)G^{ij}[\delta_{ij}-h_{ij}<X,\nu>]\\
 &  & +\chi(\rho-r)G^{ij}[\sum_k h_{ijk}<e_{k},E_{n+1}>-\sum_k h_{ik}h_{kj}<\nu,E_{n+1}>]\\
 &  & +G^{ij}r_{i}r_{j}r\chi^{\prime}(\rho-r).
\end{eqnarray*}
Then by \eqref{G1},
we have 
\begin{eqnarray}
G^{ij}\psi_{ij} & = & -n\chi+\chi G^{ij} h_{ij} <X,\nu>\nonumber \\
 &  & +\chi \sum_k G^{ij}h_{ik} h_{kj} (1-<\nu,E_{n+1}>)\nonumber \\
 &  & +G^{ij}r_{i}r_{j}r\chi^{\prime}.\label{eq:sigmaphi}
\end{eqnarray}
By Lemma \ref{Lem9}, we have 
\begin{equation*}
V=\sqrt{\prod^n_{i=1}(1+\kappa_{i}^{2})}.
\end{equation*}
We estimate similarly the second term on the right hand
side of (\ref{eq:sigmaphi}) 
\begin{equation}
\chi G^{ij}h_{ij} <X,\nu>\leq  n \chi r . \label{secondterm}
\end{equation}
It is obvious that
\begin{eqnarray}
\sum_{k=1}^{n}<e_{k},E_{n+1}>^{2} & = & (1-<\nu,E_{n+1}>)(1+<\nu,E_{n+1}>).\label{eq:e_k,E_4^2}
\end{eqnarray}
From the definition of $r$, we see 
\begin{eqnarray}
1-<\nu,E_{n+1}> & \leq & \frac{r^{2}}{2}\label{eq:1-=00005Cnu}
\end{eqnarray}
and
\begin{eqnarray*}
\lvert<e_{k},E_{n+1}>\rvert & \leq & r.\label{eq:e_k,E_4}
\end{eqnarray*} 
From (\ref{eq:1-=00005Cnu}), we deal with the third term of
(\ref{eq:sigmaphi}) 
\begin{equation}
\chi \sum_k G^{ij}h_{ik} h_{kj}(1-<\nu,E_{n+1}>)\leq n \chi r^{2}.\label{thirdterm}
\end{equation}
In sum, we have from \eqref{eq:sigmaphi}, \eqref{secondterm} and \eqref{thirdterm}
\begin{eqnarray}
G^{ij}\psi_{ij} & \leq & -n \chi +n \chi(r^{2}+r) +G^{ij}r_{i}r_{j}r\chi^{\prime}.\label{eq:psi}
\end{eqnarray}

We next claim that 
\begin{equation}
G^{ij}r_{i}r_{j}\leq 1 .\label{eq:cla}
\end{equation}
In fact, due to \eqref{GF}, we know $G^{ij}$ is a linear combination of $[T]^{ij}$. In any orthonormal frame, we have $g_{ij}=\delta_{ij}$. Then, by Lemma \ref{lem7}, we observe that
\begin{equation*}
G^{kl} h_{ki} = G^{ki} h_{kl}.\label{G^klh_ki}
\end{equation*}
Thus we have the following elementary properties 
\begin{eqnarray}
 \delta_{ij}- G^{ij}& = & G^{kl}h_{ki}h_{lj}.\label{eq:fsigma_1-sigma_3}
\end{eqnarray}
By (\ref{eq:fsigma_1-sigma_3}) and \eqref{G^klh_ki}, we estimate as below
\begin{eqnarray*}
    r^2 G^{ij} r_i r_j &=& G^{ij} (<X,e_i>-\sum_k h_{ik}<e_k,E_{n+1}>) (<X,e_j>-\sum_l h_{jl}<e_l,E_{n+1}>) \\
    &=& \sum\limits_{i,j}(\delta_{ij} - G^{kl}h_{ki} h_{lj}) <X,e_i><X,e_j>-2 \sum_l G^{ij} <X,e_i> h_{jl}<e_l,E_{n+1}>\\
    & & + \sum_{i,j} (\delta^{ij}-G^{ij}) <e_i,E_{n+1}><e_j,E_{n+1}>\\
    & =& \sum_i (<X,e_i>^2 +<e_i,E_{n+1}>^2)\\
    & & - G^{ij} (<X,\nu_i>+<e_i,E_{n+1}>)(<X,\nu_j>+<e_j,E_{n+1}>)\\
    &\leq &  \sum_i (<X,e_i>^2 +<e_i,E_{n+1}>^2). 
\end{eqnarray*}
Then by \eqref{eq:e_k,E_4^2} and the definition
of $r$, we have
\begin{eqnarray*}
    G^{ij} r_i r_j &\leq& \frac{\lvert X \rvert^2 - <X,\nu>^2 +(1-<\nu,E_{n+1}>)(1+<\nu,E_{n+1}>) }{r^2}\\
    &\leq & 1.
\end{eqnarray*}
So we have proved the claim (\ref{eq:cla}).\\
We obtain from (\ref{eq:cla}) and (\ref{eq:psi}) that 
\begin{align*}
\triangle_G \psi =G^{ij}\psi_{ij} & \leq -n\chi+n(r^{2}\chi+r\chi)+r\chi^{\prime}.
\end{align*}
Then we multiply both sides by $b$ and take integral on the domain
$\mathfrak{B}_{10}$ 
\begin{eqnarray}
\int_{\mathfrak{B}_{10}}b \triangle_G \psi d\Sigma & \leq & \rho^{n+1}\frac{d}{d\rho}(\int_{\mathfrak{B}_{10}}\frac{b\chi(\rho-r)}{\rho^{n}}d\Sigma)\nonumber \\
 &  & +C(n) \int_{\mathfrak{B}_{10}}rb\chi(\rho-r)d\Sigma .\label{eq:mean}
\end{eqnarray}
By (\ref{eq:logb}), we have 
\begin{equation}
-n \int_{\mathfrak{B}_{10}} \psi d\Sigma \leq \int_{\mathfrak{B}_{10}}b \triangle_G \psi d\Sigma.\label{eq:step3}
\end{equation}
Inserting (\ref{eq:step3}) into (\ref{eq:mean}), we get 
\begin{equation*}
-\frac{d}{d\rho}(\int_{\mathfrak{B}_{10}}\frac{b\chi(\rho-r)}{\rho^{n}} d\Sigma) \leq  \frac{C(n) \int_{\mathfrak{B}_{10}}rb\chi(\rho-r)d\Sigma}{\rho^{n+1}}+ n  \frac{\int_{\mathfrak{B}_{10}} \psi d\Sigma}{\rho^{n+1}}.
\end{equation*}
Because $\chi$, $\chi^{\prime}$ and $\psi$ are all supported in
$\mathfrak{B}_{\rho}$, we deal with right hand side of the above
inequality term by term. For the first term, we have 
\begin{equation}
\frac{\int_{\mathfrak{B}_{10}}rb\chi(\rho-r) d\Sigma}{\rho^{n+1}}\leq\frac{\int_{\mathfrak{B}_{10}}b\chi(\rho-r) d\Sigma}{\rho^{n}}.\label{eq:term1}
\end{equation}
For the second term, we use the definition of $\psi$ to estimate 
\begin{eqnarray}
\frac{\int_{\mathfrak{B}_{10}} \psi d\Sigma}{\rho^{n+1}} & \leq & \frac{\int_{\mathfrak{B}_{10}} b \chi(\rho-r)d\Sigma}{\rho^{n}}.\label{eq:term3}
\end{eqnarray}
We combine (\ref{eq:term1}) and  (\ref{eq:term3})
 with integrating from $\delta$ to $R$:
\begin{eqnarray*}
\int_{\mathfrak{B}_{10}}\frac{b\chi(\delta-r)}{\delta^{n}}d\Sigma & \leq & \int_{\mathfrak{B}_{10}}\frac{b\chi(R-r)}{R^{n}}d\Sigma\\
 &  & +C(n) \int_{\delta}^{R}\frac{\int_{\mathfrak{B}_{10}}b \chi(\rho-r)d\Sigma}{\rho^{n}}d\rho.
\end{eqnarray*}
Then using Gronwall's inequality, we get 
\begin{equation*}
\int_{\mathfrak{B}_{10}}\frac{b \chi(\delta-r)}{\delta^{n}}d\Sigma \leq C(n) \int_{\mathfrak{B}_{10}}\frac{b\chi(R-r)}{R^{n}}d\Sigma.
\end{equation*}
Letting $\chi$ approximate the characteristic function of the interval
$(0,\infty)$, in an appropriate fashion, we obtain, 
\begin{equation*}
\frac{\int_{\mathfrak{B}_{\delta}}bd\Sigma}{\delta^{n}}\leq C(n)\frac{\int_{\mathfrak{B}_{R}}b d\Sigma}{R^{n}}.\label{eq:monotonicity}
\end{equation*}
For a sufficient small $\delta>0$, the geodesic ball with radius
$\delta$ of this submanifold is comparable with $\mathfrak{B}_{\delta}$.
Letting $\delta\rightarrow0$, we finally get 
\begin{eqnarray*}
b(y_{0}) & \leq & C(n)\frac{\int_{\mathfrak{B}_{R}}b d\Sigma}{R^{n}}\leq C(n,\lVert M\rVert_{C^{1}})\frac{\int_{B_{R}(y_{0})}bVdx}{R^{n}}.
\end{eqnarray*}
\end{proof}

\section{Proof of Theorem \ref{thm:SpecialLag}}

In this section, we choose different cutoff functions, all denoted by $0\leq\phi\leq1$, which have support in the larger ball $B_{r+1}(x_{0})$ and equal 1 in the smaller ball $B_{r}(x_{0})$. These functions satisfy $|D\phi|+|D^{2}\phi|\leq C$.
First, we prove that the area is bounded.
\begin{lem}
\label{lem:areaEstimate} If $\kappa\in\Gamma_{n-1}$, then the following
integral is bounded 
\begin{equation*}
    \int_{B_{r}(x_{0})} V dx\leq C,
\end{equation*}

where $C$ depends only on $n$ and $\lVert M\rVert_{C^{1}(B_{r+2}(x_{0}))}$.
\end{lem}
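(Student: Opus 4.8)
The plan is to bound the $n$-dimensional Hausdorff measure of the graph $M = (x,u(x))$ over $B_r(x_0)$, which is exactly $\int_{B_r(x_0)} \sqrt{\det g}\,dx = \int_{B_r(x_0)} W\,dx$, and then upgrade this to a bound on $\int V\,dx$ where $V = \sqrt{\prod_i(1+\kappa_i^2)} = \sqrt{\det G}$ is the volume form of the lifted submanifold $\Sigma = (X,\nu)$. The first quantity is controlled purely by the $C^1$ norm of $u$ on $B_{r+2}$, since $W = \sqrt{1+|Du|^2}$. The real content is estimating the extra curvature factor in $V$, and for this I would use the divergence structure of the equation together with the admissibility hypothesis $\kappa \in \Gamma_{n-1}$.

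First I would write $V$ in algebraic form: by the lemma computing $V = \Pi\cos^{-1}\theta_\ell$ and the identity $V_1 = \cos\Theta\, V$, $V_2 = \sin\Theta\, V$, one has $V = \cos\Theta\sum_{0\le 2k\le n}(-1)^k\sigma_{2k} + \sin\Theta\sum_{1\le 2k+1\le n}(-1)^k\sigma_{2k+1}$, a fixed linear combination of the $\sigma_k(\kappa)$. So it suffices to bound $\int_{B_r(x_0)}\sigma_k(\kappa)\,W\,dx$ for each $k \le n$ (the weight $W$ comes from converting the surface measure $d\mathcal{H}^n$ on $M$ to $dx$; note $\sigma_k$ here is evaluated on principal curvatures, and on $M$ one has $\sigma_k(\kappa)\,d\mathcal{H}^n = [T_{k-1}]$-type divergence expressions). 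The standard device is that $\sigma_k$ of the Weingarten map is, up to the metric, a divergence: $\sigma_k(\kappa)\sqrt{\det g} = \frac{1}{k}\partial_i(\,[T_{k-1}]_i^j\partial_j X\cdot \text{(position-type vector)}\,)$ using the divergence-free identity $\sum_j\partial_j[T_k]_i^j = 0$ stated in the excerpt. Concretely, testing against the cutoff $\phi$ I would integrate by parts to move derivatives off the curvature terms and onto $\phi$ and onto lower-order geometric quantities, inducting downward on $k$.

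The cleanest route: use that $H = \sigma_1 = \operatorname{div}_M \nu$ and more generally each $\sigma_k$ integrated against a compactly supported test function can be traded, via the Newton tensor divergence identity $\partial_j[T_{k-1}]_i^j = 0$ and $X_{ij} = -h_{ij}\nu$, for an integral of $\sigma_{k-1}$ against derivatives of the test function times bounded ($C^1$) quantities — this is the classical Reilly-type / Minkowski-formula manipulation. Running this induction from $k=n$ down to $k=0$, with $\sigma_0 = 1$ giving the base case $\int \phi\, W\,dx \le C\|M\|_{C^1}$, yields $\int_{B_r(x_0)}\sigma_k(\kappa)\,W\,dx \le C(n,\|M\|_{C^1(B_{r+2})})$ for every $k$. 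Summing with the fixed coefficients $\cos\Theta,\sin\Theta$ gives $\int_{B_r(x_0)} V\,dx \le C$. The admissibility $\kappa\in\Gamma_{n-1}$ is used to guarantee sign control — in particular $\sigma_j(\kappa) > 0$ for $j \le n-1$ so there is no cancellation disguising a large positive and large negative part, and the only possibly-signed term $\sigma_n$ is controlled separately (or is bounded because $V$ itself, being a volume form, is manifestly positive and dominates $|\sigma_k|$ up to constants once we know $\int\sigma_j W$ for $j<n$).

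I expect the main obstacle to be the bookkeeping in the integration-by-parts induction: the Newton-tensor divergence identity is stated in the excerpt for tensors on $M$, but to get a genuinely closed estimate one must be careful that at each step the ``remainder'' produced by differentiating the cutoff is again a (lower) $\sigma_k$ weighted by a universally bounded function of $(x, u, Du)$ — and not something involving $|Du|$ to a growing power or an uncontrolled curvature quantity. Handling the top term $\sigma_n = \det(h_i^j)$, which need not be positive on $\Gamma_{n-1}$, is the delicate point; but since $V = \sqrt{\det G}$ is intrinsically nonnegative and $\det G = \prod(1+\kappa_i^2) \le C_n\bigl(1 + \sum_{k\le n-1}\sigma_k(\kappa)^2 + \cdots\bigr)$ can be dominated by products of the lower symmetric functions already controlled in $L^1$, one finishes by an elementary algebraic estimate rather than by estimating $\sigma_n$ directly. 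Alternatively — and this may be the intended short proof — one simply notes $V = \Pi_\ell \cos^{-1}\theta_\ell$ with $\theta_\ell = \arctan\kappa_\ell$; $\int V\,W^{-1}\,d\mathcal{H}^n$ is the induced volume of $\Sigma$, and a Michael–Simon / monotonicity-free area bound for $\Sigma$ (which has bounded mean curvature, as shown just before Theorem \ref{thm-meanvalue}) over a fixed-size region follows from the $C^1$ graph bound on $M$, giving the claim directly.
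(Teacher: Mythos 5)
Your overall strategy for the range $k \le n-1$ is exactly the paper's: test $k\sigma_k = [T_{k-1}]_i^j D_j(u_i/W)$ against a cutoff, integrate by parts once using the divergence-free identity $\partial_j[T_{k-1}]_i^j = 0$, and bound the resulting $|[T_{k-1}]|$ by a constant times $\sigma_{k-1}$ (valid because $[T_{k-1}]$ is positive definite and has trace $(n-k+1)\sigma_{k-1}$ on $\Gamma_{n-1}$), then induct starting from $\sigma_0=1$. That part of your proposal is correct and essentially identical to the paper; the extra $W$ factor you carry is harmless since $W$ is bounded by the $C^1$ hypothesis.

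The genuine gap is the $\sigma_n$ term, which you correctly flag as the delicate point but do not resolve. Your proposed ``elementary algebraic'' route cannot work: $\det G = V_1^2 + V_2^2$ by Lemma \ref{Lem9} contains $\sigma_n^2$ as its top-degree piece, so $V = \sqrt{\det G}$ has a genuine $|\sigma_n|$ contribution of degree $n$ in $\kappa$ that is \emph{not} pointwise dominated by the lower $\sigma_j$ — for instance on $\Gamma_{n-1}$ with $\kappa_1=\cdots=\kappa_{n-1}=t$ and $\kappa_n = -ct$, $c<1/(n-1)$, one has $|\sigma_n|/\sigma_{n-1}\to\infty$ as $t\to\infty$. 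Your second alternative is also wrong in the stated direction: Michael--Simon monotonicity (combined with bounded mean curvature of $\Sigma$) gives \emph{lower} density bounds, not upper area bounds, and a hypersurface of bounded mean curvature can have arbitrarily large area in a fixed Euclidean ball. The paper's actual treatment of the $\sigma_n$ term is to integrate by parts a second time: after the first integration by parts one meets $[T_{n-1}]_i^j$, which is not sign-controlled; the paper substitutes $[T_{n-1}]_i^j = \sigma_{n-1}\delta_i^j - [T_{n-2}]_i^k h_k^j$, rewrites $h_k^j = D_k(u_j/W)$, and integrates by parts once more (again using divergence-freeness of $T_{n-2}$ and $[T_{n-2}]_i^k h_k^i = (n-1)\sigma_{n-1}$) to land on integrals of $\sigma_{n-1}$ and $\sigma_{n-2}$ against bounded $C^1$ quantities, which are covered by the induction already established. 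That second integration by parts is the missing idea in your argument.
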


\begin{proof}
For a non-negative cutoff function $\phi\in C_{0}^{\infty}(B_{r+1}(x_{0}))$. $V$ is a linear combination of $\sigma_{k}$ where $k$ from $0$ to $n$.
The integral is obviously bounded when $k=0,1$ as follows 
\begin{eqnarray*}
\int_{B_{r}(x_{0})}\sigma_{1}dx\leq\int_{B_{r+1}(x_{0})}\phi^{2}\sigma_{1}dx & = & \int_{B_{r+1}}\phi^{2}div(\frac{Du}{W})dx\\
 & = & \int_{B_{r+1}}-\sum_{i}(\phi^{2})_{i}\frac{u_{i}}{W}dx\\
 & \leq & C.
\end{eqnarray*}
Inductively, we assume that 
\[
\int_{B_{r+1}}\sigma_{k-1}dx\leq C.
\]
Let us prove that for any $k\leq n-1$ 
\begin{equation}
\int_{B_{r}}\sigma_{k}dx\leq C.\label{eq:sigmak}
\end{equation}
Similarly, for $\kappa\in\Gamma_{n-1}$ 
\begin{eqnarray*}
\int_{B_{r}}k\sigma_{k}dx\leq\int_{B_{r+1}}k\phi^{2}\sigma_{k}dx & = & \int_{B_{r+1}}\phi^{2}[T_{k-1}]_{i}^{j}D_{j}(\frac{u_{i}}{W})dx\\
 & \leq & -\int_{B_{r+1}}(\phi^{2})_{j}[T_{k-1}]_{i}^{j}\frac{u_{i}}{W}dx\\
 & \leq & C\int_{B_{r+1}}\sigma_{k-1}dx.
\end{eqnarray*}
By induction assumption, we get the estimate for $k\leq n-1$.

Then we estimate the term with $\sigma_{n}$. 
\begin{eqnarray*}
-n\int_{B_{r+1}(x_{0})}\phi^{2}\sigma_{n}dx & = & -\int\phi^{2}[T_{n-1}]_{i}^{j}D_{j}(\frac{u_{i}}{W})dx\\
 & = & 2\int\phi[T_{n-1}]_{i}^{j}\phi_{j}\frac{u_{i}}{W}dx.
\end{eqnarray*}
Using (\ref{eq:Tk1}), we continue our estimate 
\begin{eqnarray*}
& &\int \phi[T_{n-1}]_{i}^{j}\phi_{j}\frac{u_{i}}{W}dx\\
& = & \int\phi\phi_{i}\frac{u_{i}}{W}\sigma_{n-1}dx-\int\phi[T_{n-2}]_{i}^{k}\phi_{j}\frac{u_{i}}{W}D_{k}(\frac{u_{j}}{W})dx.\\
 & = & \int\phi\phi_{i}\frac{u_{i}}{W}\sigma_{n-1}dx+\int[T_{n-2}]_{i}^{k}(\phi\phi_{j})_{k}\frac{u_{i}}{W}\frac{u_{j}}{W}dx+
 (n-1)\int\sigma_{n-1}\phi\phi_{j}\frac{u_{j}}{W}dx\\
 & \leq & C \int \phi \sigma_{n-1} \leq  C
\end{eqnarray*}
where we use the previous estimates (\ref{eq:sigmak}) in the last
inequality. So we get 
\[
\int_{B_{r}(x_{0})} V dx\leq\int_{B_{r+1}(x_{0})}\phi^{2}[\cos \Theta \sum\limits_{0 \leq 2k \leq n} (-1)^k \sigma_{2k}  +\sin \Theta \sum\limits_{1\leq 2k+1 \leq n} (-1)^k  \sigma_{2k+1}]dx\leq C.
\]
\end{proof}

\begin{lem} \label{lem19} If $\kappa\in\Gamma_{n-1}$ and satisfies the equation \eqref{eq:specialLag}, then for $\forall \quad k\leq n-1$, we have
\begin{equation*}
 [T_{k-1}]^{iq} g_{pq} [T_{k-1}]^{jp} \leq C(n) G^{ij} V^2 .
\end{equation*}
\end{lem}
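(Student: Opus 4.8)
The plan is to reduce everything to the pointwise eigenvalue picture, where at a fixed point we diagonalize $h_{ij}$ with $\kappa_i = h_{ii}$, so that $g_{ij}=\delta_{ij}$, $G^{ij} = \delta_{ij}/(1+\kappa_i^2)$, and each Newton tensor $[T_{k-1}]^{ij}$ is also diagonal with $[T_{k-1}]^{ii} = \sigma_{k-1}(\kappa\mid i) := \partial\sigma_k/\partial\kappa_i$ (the $(k-1)$-th symmetric function of the $\kappa_j$ with $j\neq i$). Since all three tensors are simultaneously diagonal, the left-hand side $[T_{k-1}]^{iq} g_{pq} [T_{k-1}]^{jp}$ is zero unless $i=j$, and on the diagonal it equals $\big(\sigma_{k-1}(\kappa\mid i)\big)^2$. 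Meanwhile the right-hand side on the diagonal is $C(n)\, V^2/(1+\kappa_i^2)$. Recalling from Lemma \ref{Lem9} that $V^2 = \prod_{l=1}^n (1+\kappa_l^2)$, the claim therefore reduces to the purely algebraic inequality
\begin{equation*}
\big(\sigma_{k-1}(\kappa\mid i)\big)^2 \;\leq\; C(n)\,\prod_{l\neq i}(1+\kappa_l^2)
\end{equation*}
for each fixed $i$ and each $k\leq n-1$, valid for all $\kappa\in\Gamma_{n-1}$.

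To prove this, fix $i$ and write $\mu = (\mu_1,\dots,\mu_{n-1})$ for the $\kappa_l$ with $l\neq i$. First I would handle the trivial bound: $\sigma_{k-1}(\mu)$ is a sum of $\binom{n-1}{k-1}$ monomials, each a product of $k-1$ of the $\mu_j$'s, and $|\mu_{j_1}\cdots \mu_{j_{k-1}}| \leq \prod_{j}(1+|\mu_j|)^{1/2}\cdot\prod_j(1+\mu_j^2)^{1/2}$ — more cleanly, $|\mu_{j_1}\cdots\mu_{j_{k-1}}|^2 \leq \prod_{m=1}^{k-1}(1+\mu_{j_m}^2) \leq \prod_{l\neq i}(1+\mu_l^2)$, since each factor $1+\mu_{j_m}^2 \geq \mu_{j_m}^2$ and the remaining factors are $\geq 1$. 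Summing and using Cauchy–Schwarz, $(\sigma_{k-1}(\mu))^2 \leq \binom{n-1}{k-1}\sum (\text{monomial})^2 \leq \binom{n-1}{k-1}^2 \prod_{l\neq i}(1+\mu_l^2)$, which gives the claim with $C(n) = \binom{n-1}{k-1}^2 \leq 4^{n-1}$ — and crucially this argument uses no sign information at all, only $1+t^2\geq t^2$ and $1+t^2\geq 1$.

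So in fact the statement does not even need $\kappa\in\Gamma_{n-1}$ for the stated conclusion; the admissibility hypothesis is simply the standing assumption under which $G^{ij}>0$ and the identity $G^{ij}=F^{ij}/V$ hold, which is what makes the right-hand side meaningful. The only mild care points are: (i) justifying the reduction to the diagonal frame — this is standard, since both sides transform tensorially under orthogonal changes of frame and one may diagonalize $h$ at the point; (ii) noting $\sigma_{k-1}(\kappa\mid i)$ can be negative but the inequality involves its square, so signs are irrelevant; and (iii) the edge cases $k=1$ (where $[T_0]^{ij}=g^{ij}=\delta_{ij}$ and the inequality reads $1 \leq C(n) V^2/(1+\kappa_i^2)$, true since $V^2/(1+\kappa_i^2) = \prod_{l\neq i}(1+\kappa_l^2)\geq 1$) and small $n$. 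I do not anticipate a genuine obstacle here; the ``hard part'' is merely bookkeeping the frame reduction and confirming that the constant depends only on $n$, which the Cauchy–Schwarz count makes transparent.
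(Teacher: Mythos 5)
Your proof is correct and follows essentially the same route as the paper: diagonalize $g$ and $h$ at a point so that $[T_{k-1}]$, $g$, and $G$ are simultaneously diagonal, reduce the tensor inequality to the scalar statement $\big(\sigma_{k-1}(\kappa\mid i)\big)^2 \leq C(n)\,V^2/(1+\kappa_i^2)$, and bound each monomial of $\sigma_{k-1}(\kappa\mid i)$ against $\prod_{l\neq i}(1+\kappa_l^2)$. The only cosmetic difference is that the paper uses $k-1\leq n-2$ to note that each monomial omits $\kappa_i$ and at least one further $\kappa_j$, recording the intermediate bound $[T_{k-1}]^{ii}\leq C(n)\sum_{j\neq i} V/\sqrt{(1+\kappa_i^2)(1+\kappa_j^2)}$, whereas you go directly via $1+t^2\geq t^2$ and $1+t^2\geq 1$; both give the same conclusion, and your remark that the equation enters only through $V^2=\prod_l(1+\kappa_l^2)$ is accurate.
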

\begin{proof}
  We know that $ [T_{k-1}]^{ij} $ and $G^{ij}$ are positive definite matrix when $k\leq n-1$. Without loss of generality, we can assume pointwise that $g_{ij}$ and $h_{ij}$ are diagonalized. Then $[T_{k-1}]^{ii}$ consists at most $(n-2)$
eigenvalues without $\kappa_{i}$. Each component of $[T_{k-1}]^{ii}$
does not contain $\kappa_{i}$ and some other eigenvalues. So at this point we obtain from Lemma \ref{Lem9} that
\begin{eqnarray*}
 [T_{k-1}]^{ii} & \leq &\sum\limits_{\overset{i_1< i_2 \cdots < i_{k-1} }{i_1,i_2,\cdots,i_{k-1} \neq i}} \lvert \kappa_{i_1} \rvert \lvert \kappa_{i_2} \rvert\cdots \lvert \kappa_{i_{k-1}} \rvert \\
&\leq& C(n) \sum\limits_{j\neq i} \frac{V}{\sqrt{(1+\kappa^2_i)(1+\kappa^2_j)}}\\
&\leq & C(n) \sqrt{G^{ii}} V.
\end{eqnarray*}
Thus we have 
\begin{equation*}
 [T_{k-1}]^{iq} g_{pq} [T_{k-1}]^{jp} \leq C(n) G^{ij} V^2 .
\end{equation*}
\end{proof}

Then we start to prove Theorem \ref{thm:SpecialLag}.
\begin{proof}
From Theorem \ref{thm-meanvalue}, we have at the maximum point $x_{0}$
of $\bar{B}_{1}(0)$ 
\begin{eqnarray}
b(x_{0}) & \leq & C \int_{B_{1}(x_{0})}b  V dx.\label{eq:WY1}
\end{eqnarray}

We shall estimate $\int_{B_{1}(x_{0})}b\sigma_{1}dx$ in the above
integral at first. Recall that 
\begin{equation}
G^{ij}b{}_{ij}\geq c_{0}G^{ij}b{}_{i}b{}_{j}-n,\label{eq:logb-1}
\end{equation}
we have an integral version of this inequality for any $r<5$, 
\begin{eqnarray}
\int_{B_{r+1}}-G^{ij}\phi_{i}b_{j}d\Sigma & \geq & c_{0}\int_{B_{r+1}}\phi G^{ij}b_{i}b_{j}d\Sigma-n\int_{B_{r+1}} d\Sigma,\label{eq:intlog}
\end{eqnarray}
for all non-negative $\phi\in C_{0}^{\infty}$. 
\begin{eqnarray}
\int_{B_{1}(x_{0})}b\sigma_{1}dx & \leq & \int_{B_{2}(x_{0})}\phi b\sigma_{1}dx\nonumber \\
 & \leq & C(\int_{B_{2}(x_{0})}bdx+\int_{B_{2}(x_{0})}|Db|dx)\nonumber \\
 & \leq & C(1+\int_{B_{2}(x_{0})}|Db|dx).\label{eq:WY2}
\end{eqnarray}
 We only need to estimate $\int_{B_{2}(x_{0})}|Db|dx$. Using Lemma \ref{lem19}, we have that
\begin{equation*}
    F^{ij}V \geq g^{ij} \geq \frac{\delta_{ij}}{W^2}.
\end{equation*}
Then we  obtain 
\begin{equation*}
    \int_{B_{2}(x_{0})}|Db|dx\leq \int_{B_{2}(x_{0})} W \sqrt{F^{ij}b_{i}b_{j}V}dx.
\end{equation*}
By Holder inequality, we have 
\begin{eqnarray}
\int_{B_{2}(x_{0})}|Db|dx & \leq & C (\int_{B_{2}(x_{0})}G^{ij}b_{i}b_{j}d\Sigma)^{\frac{1}{2}}(\int_{B_{2}(x_{0})}Vdx)^{\frac{1}{2}}\nonumber \\
 & \leq & C\int_{B_{3}(x_{0})}\phi^{2}G^{ij}b_{i}b_{j}d\Sigma+\int_{B_{2}}  V dx.\label{eq:WY3}
\end{eqnarray}
 Then using (\ref{eq:intlog}) and Lemma \ref{lem:areaEstimate},
we get 
\begin{eqnarray*}
\int_{B_{3}(x_{0})}\phi^{2}G^{ij}b_{i}b_{j}d\Sigma & \leq & C(-\int_{B_{3}(x_{0})}\phi G^{ij}\phi_{i}b_{j}d\Sigma+1)\\
 & \leq & C(\int_{B_{3}(x_{0})}\sqrt{\phi^{2}G^{ij}b_{i}b_{j}}\sqrt{G^{kl}\phi_{k}\phi_{l}}d\Sigma+1).
\end{eqnarray*}
 By Cauchy-Schwarz inequality 
\begin{eqnarray*}
\int_{B_{3}(x_{0})}\phi^{2}G^{ij}b_{i}b_{j}d\Sigma & \leq & C(\epsilon\int_{B_{3}(x_{0})}\phi^{2}G^{ij}b_{i}b_{j}d\Sigma+\int_{B_{3}(x_{0})}G^{ij}\phi_{i}\phi_{j}d\Sigma+1)\\
 & \leq & C\epsilon\int_{B_{3}(x_{0})}\phi^{2}G^{ij}b_{i}b_{j}d\Sigma+C\int_{B_{3}(x_{0})}V dx+C.
\end{eqnarray*}
We choose $\epsilon$ small such that $C\epsilon\leq\frac{1}{2}$
and apply Lemma \ref{lem:areaEstimate},
\begin{eqnarray}
\int_{B_{3}(x_{0})}\phi^{2}G^{ij}b_{i}b_{j}d\Sigma & \leq & C.\label{eq:WY4}
\end{eqnarray}
By combining (\ref{eq:WY2}), (\ref{eq:WY3}), (\ref{eq:WY4}) and  Lemma \ref{lem:areaEstimate},
we have 
\begin{equation}
\int_{B_{1}(x_{0})}b\sigma_{1}dx\leq C_1.\label{eq:part1}
\end{equation}

The second part is to estimate $\int_{B_{1}(x_{0})}b\sigma_{k}dx$
for any $k\leq n-1$ inductively. Suppose we have already the estimate
\begin{equation}
\int_{B_{2}(x_{0})}b\sigma_{k-1}dx\leq C_{k-1}.\label{eq:k-1}
\end{equation}
We are going to prove that 
\[
\begin{array}{cccc}
\int_{B_{1}(x_{0})}b\sigma_{k}dx\leq C_{n-1}, & for & any & k\leq n-1\end{array}.
\]
 Thanks to the divergence free property, we integral by parts as follows
\begin{eqnarray}
k\int_{B_{2}(x_{0})}\phi^{2}b\sigma_{k}dx & = & \int\phi^{2}b[T_{k-1}]_{i}^{j}D_{j}(\frac{u_{i}}{W})dx\nonumber \\
 & = & -\int[T_{k-1}]_{i}^{j}(\phi^{2})_{j}b\frac{u_{i}}{W}dx-\int[T_{k-1}]_{i}^{j}\phi^{2}b_{j}\frac{u_{i}}{W}dx.\label{eq:sigma3}
\end{eqnarray}
By induction assumption (\ref{eq:k-1}), it is easy to see that 
\begin{equation}
    -\int[T_{k-1}]_{i}^{j}(\phi^{2})_{j}b\frac{u_{i}}{W}dx\leq C\int_{B_{2}}\sigma_{k-1}bdx \leq C. \label{eqindunctionk-1}
\end{equation}
From Cauchy-Schwarz inequality and Lemma \ref{lem19}, we have 
\begin{eqnarray}
|[T_{k-1}]_{i}^{j}b_{j}\frac{u_{i}}{W}| &=&|[T_{k-1}]^{jp} g_{pi} b_{j}\frac{u_{i}}{W}| \nonumber\\
& \leq & \frac{1}{2} \frac{b_l [T_{k-1}]^{lq} g_{pq} [T_{k-1}]^{jp}b_j}{V} +\frac{1}{2} V  \frac{u_i}{W} g_{ij} \frac{u_j}{W} \nonumber\\
&\leq & C(n, || M ||_{C^1} ) (G^{ij}b_i b_j V + V).\label{WYn-1}
\end{eqnarray}
From \eqref{WYn-1}, (\ref{eq:WY4}) and Lemma \ref{lem:areaEstimate}, we have
\begin{equation}
    -\int [T_{k-1}]^j_i b_j \phi^2 \frac{u_i}{W} dx \leq C \int G^{ij} b_i b_j d \Sigma + \int V dx \leq C \quad \forall \quad k\leq n-1. \label{T_n-1}
\end{equation}
Thus from \eqref{eq:sigma3}, \eqref{eqindunctionk-1} and  \eqref{T_n-1}, we get the estimate for any $k\leq n-1$ 
\begin{equation}
    \int_{B_{1}(x_{0})}b\sigma_{k}dx\leq C_{n-1}.\label{n-1}
\end{equation}

The last part is the estimate for $\int_{B_{1}(x_{0})}-b\sigma_{n}dx$. We divided the analysis into two cases to handle this term separately.\\  Recall that $\theta = \Theta - \frac{(n-1)\pi}{2}$.

Case 1:  $\lvert \cos \theta  \rvert \geq \frac{\sqrt{3}}{2}$.\\
We utilize the equation \eqref{eq:SpecialLag3} to reduce the estimation of the term with $\sigma_n$ to terms involving $\sigma_k$ for $k<n$. 
From the equation \eqref{eq:SpecialLag3} in Lemma \ref{lem_theta}, we observe that
\begin{equation*}
    \sigma_n = \frac{\sin \theta}{\cos \theta} \sum\limits_{1\leq 2k+1\leq n} (-1)^k \sigma_{n-2k-1} - \sum\limits_{2\leq 2k \leq n} (-1)^k \sigma_{n-2k}.
 \end{equation*}
So 
\begin{equation*}
    \int_{B_{1}(x_{0})}-b\sigma_{n}dx \leq  C \sum\limits^{n-1}_{k=1}\int_{B_{1}(x_{0})}b\sigma_{k}dx\leq C(n, C_{n-1}).
\end{equation*}

Case 2: $ \lvert \sin \theta \rvert \geq \frac{1}{2}$.\\
First, we integrate by parts once to obtain 
\begin{eqnarray*}
n\int_{B_{2}(x_{0})}-\phi^{2}b\sigma_{n}dx & = & -\int\phi^{2}b[T_{n-1}]_{i}^{j}D_{j}(\frac{u_{i}}{W})dx\\
 & \leq & \underbrace{\int[T_{n-1}]_{i}^{j}(\phi^{2})_{j}b\frac{u_{i}}{W}}_{I}dx+\underbrace{\int[T_{n-1}]_{i}^{j}\phi^{2}b_{j}\frac{u_{i}}{W}}_{II}dx.
\end{eqnarray*}
We estimate $I$ by applying (\ref{eq:Tk1}) and integrating by parts, 
\begin{eqnarray}
I & = & \int(\sigma_{n-1}\delta_{i}^{j}-[T_{n-2}]_{i}^{k}h_{k}^{j})(\phi^{2})_{j}b\frac{u_{i}}{W}dx\nonumber \\
 & \leq & \int b\sigma_{n-1}dx-\int[T_{n-2}]_{i}^{k}D_{k}(\frac{u_{j}}{W})(\phi^{2})_{j}b\frac{u_{i}}{W}dx\nonumber \\
 & \leq & C+\int[T_{n-2}]_{i}^{k}\frac{u_{j}}{W}(\phi^{2})_{jk}b\frac{u_{i}}{W}dx+\int[T_{n-2}]_{i}^{k}\frac{u_{j}}{W}(\phi^{2})_{j}b_{k}\frac{u_{i}}{W}dx\nonumber \\
 &  & +(n-1)\int\sigma_{n-1}\frac{u_{j}}{W}(\phi^{2})_{j}bdx\nonumber \\
 & \leq & C+\int\sigma_{n-2}bdx+\int[T_{n-2}]^{kl}b_{k}g_{li}\frac{u_{i}}{W}\frac{u_{j}}{W}(\phi^{2})_{j}dx.\label{eq:I}
\end{eqnarray}
The last terms of (\ref{eq:I}) can be estimated by the same argument as \eqref{T_n-1}
before. \\
So we obtain 
\begin{equation}
I=\int[T_{n-1}]_{i}^{j}(\phi^{2})_{j}b\frac{u_{i}}{W}dx\leq C.\label{eq:Ifinal}
\end{equation}
By \eqref{eq:Tk2}, we have
\begin{eqnarray*}
II & = & \int[T_{n-1}]_{i}^{j}\phi^{2}b_{j}\frac{u_{i}}{W}dx\nonumber \\
& =&   \int \sum_i \sigma_{n-1} \phi^2 b_i \frac{u_i}{W} dx -\int \sum_i [T_{n-2}]^j_l h^l_i  \phi^2 b_j \frac{u_i}{W} dx.
\end{eqnarray*}
Then from the equation \eqref{eq:SpecialLag3} in Lemma \ref{lem_theta}, we have that
\begin{equation}
    \sigma_{n-1} = -  \sum\limits_{3\leq 2k+1\leq n} (-1)^k \sigma_{n-2k-1} +\frac{\cos \theta}{\sin \theta} \sum\limits_{0\leq 2k \leq n} (-1)^k \sigma_{n-2k}.\label{sigma_n-1}
\end{equation}
Moreover (recall [$T_{-1}]=0$), we have
\begin{equation*}
    F^{j}_i = \frac{\partial F}{\partial h^i_j} = \cos \theta \sum\limits_{0\leq 2k \leq n} (-1)^k [T_{n-2k-1}]^j_i -\sin \theta \sum\limits_{1\leq 2k+1 \leq n} (-1)^k [T_{n-2k-2}]^j_i.
\end{equation*}
Thus we have 
\begin{equation}
    [T_{n-2}]^j_i  = -\frac{1}{\sin \theta } F^j_i +\frac{\cos \theta}{\sin \theta} \sum\limits_{0\leq 2k \leq n} (-1)^k [T_{n-2k-1}]^j_i - \sum\limits_{3\leq 2k+1 \leq n} (-1)^k [T_{n-2k-2}]^j_i.\label{Tn-2}
\end{equation}
We observe from \eqref{sigma_n-1}, \eqref{Tn-2} and \eqref{eq:Tk2} that the terms involving $T_{n-1}$ and $\sigma_n$ cancel each other out, as follows:
\begin{eqnarray}
      \sigma_{n-1} \delta^j_i - [T_{n-2}]^j_l h^l_i &=& -  \sum\limits_{3\leq 2k+1\leq n} (-1)^k \big \{ \sigma_{n-2k-1} \delta^j_i-[T_{n-2k-2}]^j_l h^l_i  \big\} \nonumber\\
      & &+ \frac{\cos \theta}{\sin \theta } \sum\limits_{0\leq 2k \leq n} (-1)^k \big \{ \sigma_{n-2k} \delta^j_i -  [T_{n-2k-1}]^j_l h^l_i \big\} \nonumber\\
      & &+\frac{1}{\sin \theta } F^j_l h^l_i  \nonumber \\
      &=& -  \sum\limits_{3\leq 2k+1\leq n} (-1)^k [T_{n-2k-1}]^j_i  + \frac{\cos \theta}{\sin \theta } \sum\limits_{2\leq 2k \leq n} (-1)^k [T_{n-2k}]^j_i \nonumber\\
      & & +\frac{1}{\sin \theta } F^j_l h^l_i  \label{sigma_n+T_n-1}.
\end{eqnarray}
 Now using \eqref{sigma_n+T_n-1}, we handle $II$ as follows: 
\begin{eqnarray*}
   II & =& \int \phi^2 \sum_i \big\{ -  \sum\limits_{3\leq 2k+1\leq n} (-1)^k [T_{n-2k-1}]^j_i  + \frac{\cos \theta}{\sin \theta } \sum\limits_{2\leq 2k \leq n} (-1)^k [T_{n-2k}]^j_i \big \} b_j \frac{u_i}{W}  dx\\
& &+ \int \frac{1}{\sin \theta} \sum_i F^j_l h^l_i  \phi^2 b_j \frac{u_i}{W} dx.
\end{eqnarray*}
Because of the previous argument in \eqref{T_n-1}, we have
\begin{equation*}
   \int \phi^2 \sum_i \big\{ -  \sum\limits_{3\leq 2k+1\leq n} (-1)^k [T_{n-2k-1}]^j_i  + \frac{\cos \theta}{\sin \theta } \sum\limits_{2\leq 2k \leq n} (-1)^k [T_{n-2k}]^j_i \big \} b_j \frac{u_i}{W}  dx \leq C.
\end{equation*}
Thus we have 
\begin{equation}
    II \leq C + \frac{1}{\sin \theta} \int \sum_i F^j_l  h^l_i  \phi^2 b_j \frac{u_i}{W} dx\leq C + 2\lvert \int F^{jk}h_{ki}\phi^{2}b_{j}\frac{u_{i}}{W}dx \rvert. \label{eq:II}
\end{equation}
We compute the second term of \eqref{eq:II}  
\begin{eqnarray*}
\lvert \int F^{jk}h_{ki}\phi^{2}b_{j}\frac{u_{i}}{W}dx  \rvert & \leq & 2\int\phi^{2}F{}^{ji}b_{j}b_{i}dx+2\int F{}^{ij}h_{ik}\frac{u_{k}}{W}h_{jl}\frac{u_{l}}{W}\phi^{2}dx\\
 & \leq & 2\int\phi^{2}G^{ij}b_{j}b_{i}d\Sigma+2\int Vg_{lk} \frac{u_k}{W} \frac{u_l}{W} \phi^2 dx-2\int F^{l}_k \frac{u_{k}}{W} \frac{u_{l}}{W}\phi^{2}dx.
\end{eqnarray*}
By (\ref{eq:WY4}) and Lemma \ref{lem:areaEstimate}, we get the estimate
for $II$, 
\begin{equation}
II=\int[T_{n-1}]_{i}^{j}\phi^{2}b_{j}\frac{u_{i}}{W}dx\leq C.\label{eq:IIfinal}
\end{equation}
With the estimate (\ref{eq:Ifinal}) and (\ref{eq:IIfinal}) for $I$
and $II$ , we get 
\begin{equation}
\int_{B_{1}(x_{0})}-b\sigma_{n}dx\leq C.\label{eq:part2}
\end{equation}

Finally, combining (\ref{eq:part1}), \eqref{n-1} and (\ref{eq:part2}), we get
the estimate 
\begin{equation*}
 b(x_{0})\leq C.
\end{equation*}
\end{proof}

\section{Discussion in dimension two} \label{OTinD2}

The equation in dimension two is 
\begin{equation*}
    \cos \Theta H - \sin \Theta  (1- K)=0. \label{SpecialLM2}
\end{equation*}
It is well known that in graph case, the mean curvature $H$ and the Gaussian curvature $K$ are:
\begin{eqnarray*}
     H &=& \frac{1}{W} g^{ij} u_{ij}\\
     K &=& \frac{\det D^2 u}{W^4}. 
\end{eqnarray*}
Thus the special Lagrangian curvature equation can be written as 
\begin{equation}
    \det [u_{ij} + W \cot \Theta (\delta_{ij}+u_i u_j)] =\frac{W^4}{\sin^2 \Theta}, \label{specialLC2}
\end{equation}
with $D^2 u + W \cot \Theta g \geq 0$.\\
\begin{lem} \label{optimalTransPro}
 The equation \eqref{specialLC2} can be derived from an optimal transportation problem with cost function $c(x,y)= -\sqrt{\tan^2 \Theta -|x-y|^2}$ and densities $f=\frac{1}{\cos^2 \Theta}$ and $g=1$.
\end{lem}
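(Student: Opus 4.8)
The plan is to recognise $u$ (up to a sign) as a Kantorovich potential for this cost and to match the associated Monge--Amp\`ere equation with \eqref{specialLC2}. Recall that, for a cost $c$ and densities $f,g$, a source potential $\phi$ and the optimal map $T$ obey $D\phi(x)=D_xc(x,T(x))$, the $c$-convexity (ellipticity) constraint $D^2_{xx}c(x,T(x))-D^2\phi(x)\ge 0$, and, after differentiating the first relation and combining with the Jacobian identity $f(x)=g(T(x))\,|\det DT(x)|$,
\begin{equation*}
\bigl|\det\bigl(D^2_{xx}c(x,T(x))-D^2\phi(x)\bigr)\bigr|=\bigl|\det D^2_{xy}c(x,T(x))\bigr|\,\frac{f(x)}{g(T(x))}.
\end{equation*}
So it suffices to show that, taking $\phi:=-u$ and a suitable $T$, this equation is exactly \eqref{specialLC2} when $c(x,y)=-\sqrt{\tan^2\Theta-|x-y|^2}$, $f=1/\cos^2\Theta$, $g\equiv 1$.

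First I would record, with $z=x-y$ and $a=\tan\Theta$,
\begin{equation*}
D_xc=\frac{z}{\sqrt{a^2-|z|^2}},\qquad D^2_{xx}c=\frac{(a^2-|z|^2)\,\mathrm{Id}+z\otimes z}{(a^2-|z|^2)^{3/2}},\qquad D^2_{xy}c=-D^2_{xx}c.
\end{equation*}
Then I would solve $-Du(x)=D_xc(x,T(x))$ for $T$: writing $w:=x-T(x)$ gives $-Du=w/\sqrt{a^2-|w|^2}$, hence $|w|^2=\tan^2\Theta\,|Du|^2/W^2$ and, using $W^2-|Du|^2=1$, $a^2-|w|^2=\tan^2\Theta/W^2$; so $\sqrt{a^2-|w|^2}=\tan\Theta/W$ and
\begin{equation*}
T(x)=x+\frac{\tan\Theta}{W}\,Du(x),
\end{equation*}
the footprint on $\mathbb{R}^2$ of the ambient normal segment of length $\tan\Theta$ at $(x,u(x))$. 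Substituting $|w|^2$ and $a^2-|w|^2$ collapses $D^2_{xx}c$ to
\begin{equation*}
D^2_{xx}c(x,T(x))=\frac{\tfrac{\tan^2\Theta}{W^2}(\delta_{ij}+u_iu_j)}{\tfrac{\tan^3\Theta}{W^3}}=W\cot\Theta\,(\delta_{ij}+u_iu_j)=W\cot\Theta\,g_{ij},
\end{equation*}
so $D^2_{xx}c-D^2\phi=D^2u+W\cot\Theta\,g$, and since $\det g=W^2$ in dimension two, $|\det D^2_{xy}c(x,T(x))|=(W\cot\Theta)^2\det g=W^4\cot^2\Theta$. Inserting these together with $f/(g\circ T)=1/\cos^2\Theta$ yields
\begin{equation*}
\bigl|\det(D^2u+W\cot\Theta\,g)\bigr|=W^4\cot^2\Theta\cdot\frac{1}{\cos^2\Theta}=\frac{W^4}{\sin^2\Theta},
\end{equation*}
which is \eqref{specialLC2}, while the constraint $D^2_{xx}c-D^2\phi\ge 0$ is exactly the admissibility condition $D^2u+W\cot\Theta\,g\ge 0$ attached to it.

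The computations above are routine; the care goes into the bookkeeping. One must fix the sign convention so that the potential is $-u$ rather than $u$ (the other choice produces $D^2u-W\cot\Theta\,g$, the wrong Monge--Amp\`ere branch), and one should check that $T(x)=x+\tfrac{\tan\Theta}{W}Du(x)$ is genuinely the optimal map, equivalently that $-u$ is $c$-convex, which again is the admissibility $D^2u+W\cot\Theta\,g\ge 0$. Finally it is worth noting that the cost is real only where $|x-y|<\tan\Theta$, so the correspondence lives in the range $\Theta\in(0,\tfrac{\pi}{2})$, consistent with the regime where the Ma--Trudinger--Wang condition (and the singular-solution discussion of Section~\ref{sec:intro}) is at issue.
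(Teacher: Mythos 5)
Your proposal is correct and follows essentially the same route as the paper: identify the optimal map $T(x)=x+\tfrac{\tan\Theta}{W}Du$ from the first-order relation $Du=-D_xc(x,T(x))$, compute $D^2_{xx}c(x,T(x))=W\cot\Theta\,g_{ij}$ and $D^2_{xy}c=-D^2_{xx}c$, and plug into the prescribed-Jacobian Monge--Amp\`ere equation with $\det g_{ij}=W^2$ to recover \eqref{specialLC2}. Your explicit remark about the sign convention ($\phi=-u$ versus the paper's $u(x)=\sup_y\{-c(x,y)-v(y)\}$, which both yield $Du=-D_xc$) is just a bookkeeping reconciliation, not a different argument.
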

\begin{proof}
    Let $\Omega$, $\Omega^\ast$ be two bounded domain in $\mathbb{R}^n$, and let $f$,$g$ be two nonnegative functions defined on $\Omega$ and $\Omega^\ast$, and satisfying
\begin{equation*}
\int_\Omega f(x) dx= \int_{\Omega^\ast} g(y) dy.
\end{equation*}
Monge's optimal transportation problem concerns the existence of a measure preserving mapping $T: \Omega \rightarrow \Omega^\ast$ that minimizes
\begin{equation*}
  \mathscr {M} = \inf\limits_T \{ \int_\Omega c(x,T(x)) f(x) dx : \int_{T^{-1}(E)} f dx =  \int_{E} g dy \quad \forall E\subset \Omega^\ast \}.
\end{equation*}
The dual Monge-Kantorovitch problem is to find an optimal pair of potentials $(u,v)$ that realizes
\begin{equation*}
    \mathscr {K} =\sup\limits_{(u,v)} \big\{ -\int_{\Omega} u(x) f(x)dx - \int_{\Omega^\ast} v(y) g(y) dy: u(x)+v(y)\geq -c(x,y) , \forall x \in \Omega, y\in \Omega^\ast \big\}.
\end{equation*}
A fundamental relation is
\begin{equation*}
    \mathscr {M}=\mathscr {K}.
\end{equation*}
Let $(u,v)$ be the maximizer attains $ \mathscr {K}$. We may assume the maximizer $(u,v)$ satisfies the relation:
\begin{eqnarray*}
    u(x) &=& \sup\limits_{y\in \Omega^\ast} \{ -c(x,y)-v(y) \},\\
    v(y) &=& \sup\limits_{x\in \Omega} \{ -c(x,y) - u(x) \}.
\end{eqnarray*}
A function $u$ is c-convex if there exists another function $v$ such that 
\begin{equation*}
     u(x) = \sup\limits_{y\in \Omega^\ast} \{ -c(x,y)-v(y) \}.
\end{equation*}
For any given $x_0 \in \Omega$, there exists $y_0 \in \overline{\Omega^\ast}$ such that
\begin{eqnarray*}
    u(x_0) &= &-c(x_0,y_0) - v(y_0),\\
    u(x) &\geq& -c(x,y_0)-v(y_0) \quad \forall x\in\Omega.
\end{eqnarray*}
The above two inequalities tell us that $x_0$ is a global minimum point of function
\begin{equation*}
   h(x):= u(x)+c(x,y_0)+v(y_0).
\end{equation*}
Thus if $Dh(x_0)$ and $D^2 h(x_0)$ exist, we have 
\begin{eqnarray*}
    Dh(x_0) &=& 0,\\
    D^2 h(x_0) & \geq& 0.
\end{eqnarray*}
For the cost function $c(x,y)= -\sqrt{\tan^2 \Theta-|x-y|^2}$, we have
\begin{eqnarray}
    u_i(x_0) &=& -D_{x_i} c(x_0,y_0)= - \frac{x_0^i - y_0^i}{\sqrt{\tan^2 \Theta-|x_0-y_0|^2}}, \label{criticalOT}\\
    u_{ij}(x_0) &\geq& -D^2_{x_i x_j} c(x_0,y_0) \nonumber \\
    & &= -\frac{\delta_{ij}}{\sqrt{\tan^2 \Theta-|x_0-y_0|^2}} -\frac{(x^i_0 - y^i_0)(x^j_0 - y^j_0)}{(\tan^2 \Theta-|x_0-y_0|^2)^\frac{3}{2}}.\label{minOT}
\end{eqnarray}
The optimal map is denoted by $T_u$ such that $T_u(x_0) = y_0$. From the proof in \cite{GangboMcCann}, the map is measure preserving such that
\begin{equation}
    \det DT_u(x) = \frac{f(x)}{g(T_u(x))}.\label{MeasurePreserving}
\end{equation}
From \eqref{criticalOT} and  \eqref{minOT}, we have for almost all $x\in \Omega$
\begin{eqnarray}
    \sqrt{\tan^2 \Theta-|x-T_u(x)|^2} &= &\frac{\tan \Theta}{W(x)}, \quad for \quad 0<\Theta < \frac{\pi}{2} \nonumber\\
    T_u(x)&=& \tan \Theta \frac{Du(x)}{W(x)}  +x, \nonumber\\
    D^2 u &\geq&  -  \cot{\Theta} W g. \label{c-convex}
\end{eqnarray}
If $u\in C^2(\Omega)$, c-convexity is equivalent to \eqref{c-convex}.
From \eqref{criticalOT}, we have 
\begin{equation*}
    D^2 u(x) = -D^2_{xx} c (x, T_u (x)) - D^2_{xy} c \cdot DT.
\end{equation*}
Hence the equation satisfied by $u$ is 
\begin{equation}
    \det (D^2_{xx} c + D^2 u) = \det (-D^2_{xy} c) \det DT. \label{OTequation}
\end{equation}
By direct computation, we have
\begin{eqnarray}
    D^2_{x_i x_j} c (x,T_u(x))&=& \frac{\delta_{ij}}{\sqrt{\tan^2 \Theta-|x-T_u|^2}} + \frac{(x^i-T^i_u(x))(x^j-T^j_u(x))}{(\tan^2 \Theta-|x-T_u|^2)^\frac{3}{2}} \nonumber\\
    &=& \cot{\Theta} W (\delta_{ij} + u_i u_j), \label{C_xixj}
\end{eqnarray}
and 
\begin{eqnarray}
    D^2_{x_i y_j} c (x,T_u(x))&=& -\frac{\delta_{ij}}{\sqrt{\tan^2 \Theta-|x-T_u|^2}} - \frac{(x^i-T^i_u(x))(x^j-T^j_u(x))}{(\tan^2 \Theta-|x-T_u|^2)^\frac{3}{2}} \nonumber \\
    &=&- \cot{\Theta} W (\delta_{ij} + u_i u_j).\label{C_xiyj}
\end{eqnarray}
Plugging \eqref{C_xixj}, \eqref{C_xiyj}, and \eqref{MeasurePreserving} into equation \eqref{OTequation}, we obtain
\begin{equation}
    \det (D^2 u + \cot{\Theta} W g) = \cot^2{\Theta} W^4 \frac{f(x)}{g(T_u(x))},\label{OTE}
\end{equation}
with the c-convex condition $D^2 u +\cot{\Theta} W g \geq 0$.
When $\frac{\pi}{2}>\Theta >0 $, our equation \eqref{specialLC2} has the same form as \eqref{OTE} with density  $f=\frac{1}{\cos^2 \Theta}$ and $g=1$.
\end{proof}

We are currently verifying that equation \eqref{OTE} does not satisfy the well-known Ma-Trudinger-Wang condition from \cite{ma2005regularity}.
Denote 
\begin{equation*}
    A_{ij} (x,Du) := -D^2_{xx} c (x,T_u(x)) = - \cot{\Theta} W (\delta_{ij}+ u_i u_j).
\end{equation*}
We have
\begin{eqnarray*}
    D_{u_k} A_{ij}(x,Du) = - \cot{\Theta} W (\delta_{ik} u_j + u_i \delta_{jk}) - \cot{\Theta} \frac{u_k}{W}(\delta_{ij}+u_i u_j)
\end{eqnarray*}
and 
\begin{eqnarray*}
    D^2_{u_k u_l} A_{ij}(x,Du) &=& -\cot{\Theta} \frac{u_l}{W} (\delta_{ik} u_j + u_i \delta_{jk}) - \cot{\Theta} W(\delta_{ik} \delta_{jl} + \delta_{il} \delta_{jk}) -\cot{\Theta} \frac{\delta_{kl}}{W} (\delta_{ij}+u_i u_j) \\
   & &+ \cot{\Theta} \frac{u_k u_l}{W^3} (\delta_{ij} + u_i u_j) - \cot{\Theta} \frac{u_k}{W} (\delta_{il} u_j + u_i \delta_{jl}).
\end{eqnarray*}
Thus 
\begin{eqnarray*}
    \mathscr{A}(x,y) (\xi,\nu):= D^2_{u_k u_l} A_{ij}(x,Du) \xi_i \xi_j \nu_k \nu_l (x,Du) = - \cot{\Theta} \frac{g_{ij} \xi_i \xi_j g^{kl} \nu_k \nu_l }{W} <0, \quad \forall \xi \perp \nu. 
\end{eqnarray*}
Therefore, equation \eqref{specialLC2} violates the Ma-Trudinger-Wang condition.

\section{Gradient estimates } \label{gradientestimateSec}
In this section we prove a gradient estimate that will complete our story. Notice that our theorem holds for all constant phases.  For related results we refer to Sheng-Trudinger-Wang \cite{ShengUrbasWang04} and Warren-Yuan \cite{WY10}.
\begin{theorem}
\label{thm:Gradestimate2} Suppose $M$ is a smooth graph over $B_{1}\subset\mathbb{R}^{n}$
and it is a solution of equation (\ref{eq:specialLag}). Then we have
\begin{equation*}
|Du(0)|\leq C(n)\mathop{osc}\limits_{B_1(0)}u.
\end{equation*}
\end{theorem}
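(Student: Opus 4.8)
The plan is to reduce the statement to a pointwise upper bound for $W:=\sqrt{1+|Du|^{2}}$ at the origin and then to produce that bound by a Bernstein-type argument for an auxiliary function built from $\log W$, the height $u$, and a spatial cutoff, using a subharmonicity property of $\log W$ on $\Sigma=(X,\nu)$ that holds for every constant phase.

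First I would normalize so that $\inf_{B_{1}}u=0$, hence $N:=\operatorname{osc}_{B_{1}}u=\sup_{B_{1}}u$; if $N=0$ the graph is a piece of a hyperplane and the claim is trivial, so assume $N>0$. Since $|Du|\le W$ it suffices to bound $W(0)$ by $C(n)N$. The key algebraic input — valid for all constant phases and with constants depending only on $n$, so that no admissibility hypothesis enters — is that the angle function $w:=-\langle\nu,E_{n+1}\rangle=1/W>0$ satisfies on $\Sigma$
\begin{equation*}
\triangle_{G}w=-\Big(\sum_{i=1}^{n}\frac{\kappa_{i}^{2}}{1+\kappa_{i}^{2}}\Big)w\le 0,\qquad\text{equivalently}\qquad \triangle_{G}\log W=\sum_{i=1}^{n}\frac{\kappa_{i}^{2}}{1+\kappa_{i}^{2}}+|\nabla_{G}\log W|^{2}\ge 0 .
\end{equation*}
This follows by differentiating \eqref{eq:specialLag} once, so that $G^{ij}h_{ijk}=0$ as in \eqref{G1}, together with the Gauss and Weingarten formulas and the identity $G^{ij}=F^{ij}/V$; the same computation in fact shows that the Gauss map $\nu\colon(\Sigma,G)\to\mathbb{S}^{n}$ is harmonic. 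I would also record the elementary facts that $|\triangle_{G}u|=\big|\frac1W\sum_{i}\frac{\kappa_{i}}{1+\kappa_{i}^{2}}\big|\le \frac n2$, that $|\nabla_{G}u|^{2}\le 1-w^{2}$, and, on $M$ itself, $|\nabla^{M}u|_{g}^{2}=1-w^{2}=1-1/W^{2}$; the last identity is the bridge through which $\operatorname{osc}_{B_{1}}u$ controls $W(0)$, since a large $W$ means $u$ has nearly unit slope with respect to $g$.

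The estimate itself would then be obtained from the maximum principle applied to an auxiliary function of the form $\Phi=\zeta^{\alpha}\,W\,e^{-\lambda u}$ with $\zeta(x)=(1-|x|^{2})^{2}$, $\alpha=\alpha(n)$, and $\lambda$ of order $1/N$; because $\zeta$ vanishes on $\partial B_{1}$, $\Phi$ attains its maximum over $\overline{B}_{1}$ at an interior point $\bar x$. At $\bar x$ one has $\nabla_{G}\log\Phi=0$ and $\triangle_{G}\log\Phi\le 0$, and a direct computation — using the subharmonicity displayed above, the bound $|\triangle_{G}u|\le n/2$, and the estimates $|\triangle_{G}\zeta|\le C(n)\|\zeta\|_{C^{2}}$ and $|\nabla_{G}\zeta|^{2}\le|D\zeta|^{2}$ (which hold because $G\ge g\ge I$, $\Gamma^{k}_{ij}(g)=u_{ij}u_{k}/W^{2}$, and $|G^{ij}h_{ij}|\le n$) — yields an inequality in which the term $\lambda^{2}|\nabla_{G}u|^{2}\le\lambda^{2}(1-1/W^{2})$ and the cutoff errors $C(n)\alpha/\zeta(\bar x)$ must be balanced against the drift $-\lambda\triangle_{G}u$; this is what forces the choice $\lambda\sim1/N$ and, after using $\Phi(0)=W(0)e^{-\lambda u(0)}\le\Phi(\bar x)$ together with $\lambda u(0)\le\lambda N\le C$, should give $W(0)\le C(n)(1+N)$, hence $|Du(0)|\le C(n)N$ (the case $N\le1$ being handled separately). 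The hard part is exactly closing this balance: since the linearized operator $F^{ij}=VG^{ij}$ has unbounded eigenvalue ratio $(1+\kappa_{\max}^{2})/(1+\kappa_{\min}^{2})$ the equation is only degenerate elliptic, and since the theorem presupposes no $C^{1}$ bound on $u$ one cannot invoke the area estimate of Lemma~\ref{lem:areaEstimate} or the mean value inequality of Theorem~\ref{thm-meanvalue}, whose constants involve $\|M\|_{C^{1}}$. One must therefore work intrinsically on $(\Sigma,G)$ using only the bound $|\mathscr{H}|\le C(n)$ for the mean curvature of $\Sigma$ in $\mathbb{R}^{n+1}\times\mathbb{S}^{n}$, which depends on $n$ alone; if the pointwise Bernstein balance above does not quite close, the fallback is to combine the genuine subharmonicity $\triangle_{G}\log W\ge 0$ with Michael--Simon's mean value inequality on $\Sigma$ — available with a dimensional constant by that mean-curvature bound — and a Caccioppoli estimate for the almost-harmonic function $u$ (whose oscillation is $N$), so as to bound $\log W(0)$ itself rather than merely $|\nabla_{G}\log W|$ at a touching point.
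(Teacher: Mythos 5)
The analytic engine you identify — $\triangle_G \log W \ge 0$ on $(\Sigma, G)$ for every constant phase, following from the once-differentiated equation $G^{ij}h_{ijk}=0$, the Weingarten formula, and $G^{ij}=F^{ij}/V$ — is correct and is indeed what drives the paper's proof. But the test function $\Phi = \zeta^{\alpha} W e^{-\lambda u}$, i.e.\ $\log\Phi = \alpha\log\zeta + \log W - \lambda u$, uses $\log W$ where the paper crucially uses $\log\log W$ (their $P = 2\log\eta + \log\log W + \langle X, E_{n+1}\rangle$), and this difference is fatal. At an interior maximum of $\Phi$, the first-order condition pins $\nabla_G\log W = -\alpha\nabla_G\log\zeta + \lambda\nabla_G u$, which is bounded by $C\alpha/\zeta + \lambda$ \emph{uniformly in} $W$. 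The good terms $|\nabla_G\log W|^2 + \sum_i\frac{\kappa_i^2}{1+\kappa_i^2}$ in $\triangle_G\log\Phi$ are therefore both bounded above by dimensional constants that do not grow with $W$, so the second-order inequality $0\ge\triangle_G\log\Phi$ becomes a relation between bounded quantities and yields no information about $W(\bar x)$. By contrast, $\log\log W$ makes the first-order condition control $\nabla_G\log W / \log W$, so that after choosing the tangent frame with $\langle e_i,E_{n+1}\rangle = 0$ for $i<n$ and exploiting the structural cancellation $\eta_n = O(1/W)$ (special to $\eta=1-|X|^2+\langle X,E_{n+1}\rangle^2$), one finds $(\log W)_n/\log W \approx -1$, i.e.\ $|(\log W)_n| \sim \log W$. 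The good term then contributes $\frac{G^{nn}(\log W)_n^2}{\log W} \gtrsim c\,\log W$ — genuinely growing in $W$ — and this is what closes the Bernstein argument. Your proposal never produces such a growing term.

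Your fallback via Michael--Simon does not repair this. To turn the subharmonicity $\triangle_G\log W \ge 0$ into a pointwise bound for $\log W(0)$ you would need $\int_{\Sigma\cap \mathfrak{B}_R}\log W\,d\Sigma$ finite, which requires an area bound for $\Sigma$ over $B_2$; but as you yourself note, Lemma~\ref{lem:areaEstimate} (and hence Theorem~\ref{thm-meanvalue}) already costs $\|M\|_{C^1}$, which is exactly what the gradient estimate is supposed to deliver, so the argument is circular. To complete the proof along the paper's lines you should replace $\log W$ in the auxiliary function by $\log\log W$, and replace $(1-|x|^2)^2$ by $\eta=1-|X|^2+\langle X,E_{n+1}\rangle^2$ (so that the frame computation gives $\eta_n\to 0$ as $W\to\infty$), and then carry out the estimate of the crucial term $\frac{G^{ij}(\log W)_i(\log W)_j}{\log W}$ as in equations (\ref{eq: temp G^ij h_ik h_jl})--(\ref{eq: temp G^nn}) of the paper.
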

\begin{proof}
By scaling we assume that $1\le u\le 2$. We know that $X=(x,u)$, so $u=<X,E_{n+1}>$ and
$\lvert X \rvert^2 = \lvert x\rvert^2 + u^2 $.
The cut off function $\eta$ can be written to
\begin{equation*}
    \eta = 1- \lvert X \rvert ^2 +<X,E_{n+1}>^2.
\end{equation*} 
Denote $W:=\sqrt{1+\lvert D u\rvert^2} $.
We also know that the outer normal vector of the graph is $\nu = (\frac{u_i}{W} , -\frac{1}{W})$, so 
\begin{equation*}
    W= -\frac{1}{<\nu,E_{n+1}>}.
\end{equation*}
We consider 
\begin{equation*}
  P=2\log \eta + \log \log \frac{-1}{<\nu,E_{n+1}>} + <X,E_{n+1}> .
\end{equation*}
Suppose $P$ attains maximum at one point $x_0 \in B_1(0)$, we have
\begin{equation*}
    0=P_i=\frac{2\eta_i}{\eta} - \frac{\sum_k h_{ik}<e_k,E_{n+1}>}{<\nu,E_{n+1}> \log W} +  <e_i,E_{n+1}>
\end{equation*}
and
\begin{eqnarray*}
     0\geq P_{ij}&= &2\frac{\eta_{ij}}{\eta} -2\frac{\eta_i \eta_j}{\eta^2}- \frac{\sum_k h_{ikj}<e_k,E_{n+1}>}{<\nu,E_{n+1}> \log W} \\
    &  &+\frac{\sum_k h_{ik} h_{kj} <\nu,E_{n+1}>}{<\nu,E_{n+1}> \log W}+\frac{(\sum_k h_{ik}<e_k,E_{n+1}>)(\sum_l h_{jl}<e_l,E_{n+1}>)}{<\nu,E_{n+1}>^2 \log W}\\
    & &-\frac{(\sum_k h_{ik}<e_k,E_{n+1}>)(\sum_l h_{jl}<e_l,E_{n+1}>)}{<\nu,E_{n+1}>^2\log^2 W}- h_{ij} <\nu, E_{n+1}>.
\end{eqnarray*}
Contracting  with $G^{ij}$, we have
\begin{eqnarray*}
     0\geq G^{ij} P_{ij}&=&2\frac{G^{ij}\eta_{ij}}{\eta} -2\frac{G^{ij}\eta_i \eta_j}{\eta^2}- \frac{\sum_k G^{ij} h_{ikj}<e_k,E_{n+1}>}{<\nu,E_{n+1}> \log W} \\
    &  &+\frac{\sum_k G^{ij} h_{ik} h_{kj} }{ \log W}+\frac{ G^{ij}(\sum_k h_{ik}<e_k,E_{n+1}>)(\sum_l h_{jl}<e_l,E_{n+1}>)}{<\nu,E_{n+1}>^2 \log W}\\
    & &-\frac{ G^{ij}(\sum_k h_{ik}<e_k,E_{n+1}>)(\sum_l h_{jl}<e_l,E_{n+1}>)}{<\nu,E_{n+1}>^2\log^2 W}\\
    & &-  G^{ij} h_{ij} <\nu, E_{n+1}>.
\end{eqnarray*}
Suppose $\lvert Du(x_0) \rvert$ is large, we have 
\begin{eqnarray*}
    & &\frac{ G^{ij}(\sum_k h_{ik}<e_k,E_{n+1}>)(\sum_l h_{jl}<e_l,E_{n+1}>)}{<\nu,E_{n+1}>^2 \log W}\\
    & &-\frac{ G^{ij}(\sum_k h_{ik}<e_k,E_{n+1}>)(\sum_l h_{jl}<e_l,E_{n+1}>)}{<\nu,E_{n+1}>^2\log^2 W}\\
    &\geq & \frac{1}{2} \frac{ G^{ij}(\sum_k h_{ik}<e_k,E_{n+1}>)(\sum_l h_{jl}<e_l,E_{n+1}>)}{<\nu,E_{n+1}>^2 \log W}.
\end{eqnarray*}
By the equation (\ref{eq:specialLag}) we have 
\begin{eqnarray*}
    - \frac{\sum_k G^{ij} h_{ikj}<e_k,E_{n+1}>}{<\nu,E_{n+1}> \log W} +\frac{\sum_k G^{ij} h_{ik} h_{kj} <\nu,E_{n+1}>}{<\nu,E_{n+1}> \log W}\geq0.
\end{eqnarray*}
Thus we have
\begin{eqnarray*}
    G^{ij}P_{ij}& \geq &2\frac{G^{ij}\eta_{ij}}{\eta} -2\frac{G^{ij}\eta_i \eta_j}{\eta^2}+\frac{1}{2}\frac{ G^{ij}(\sum_k h_{ik}<e_k,E_{n+1}>)(\sum_l h_{jl}<e_l,E_{n+1}>)}{<\nu,E_{n+1}>^2 \log W}\\
    & &  - G^{ij} h_{ij} <\nu, E_{n+1}>.
\end{eqnarray*}
Choose $\{ e_i \}_{i<n}$ such that $\{ h_{ij}\}_{i,j<n}$ is diagonal, and $\forall \ i<n$
\begin{equation*}
    <e_i,E_{n+1}> =0.
\end{equation*}   
Since we assume $\eta^2 \log W $ is large, we have  
\begin{equation*}
    <\nu,E_{n+1}> =-\frac{1}{W}\rightarrow 0
\end{equation*}
and 
\begin{equation*}
    <e_n,E_{n+1}> = \sqrt{ 1 - <\nu,E_{n+1}>^2 } \rightarrow 1.
\end{equation*}
For the cutoff function, we have
\begin{equation*}
    \eta_i = - 2<X,e_i> +2<X,E_{n+1}> <e_i,E_{n+1}>.
\end{equation*}
and
\begin{equation*}
    \eta_{ij} = -2\delta_{ij}+ 2h_{ij}<X,\nu>  +2<e_j,E_{n+1}> <e_i,E_{n+1}>-2<X,E_{n+1}> h_{ij}<\nu,E_{n+1}>.
\end{equation*}
Then using 
\begin{equation*}
    X = <X,\nu> \nu + \sum_i <X,e_i>e_i,
\end{equation*}
we have 
\begin{equation*}
    \eta_n = -2<X,e_n><\nu,E_{n+1}>^2 + 2 <X,\nu> <\nu, E_{n+1}> <e_n, E_{n+1}> \rightarrow 0.
\end{equation*}
It is not hard to see that
\begin{equation*}
    \frac{G^{ij}\eta_{ij}}{\eta} -\frac{G^{ij}\eta_i \eta_j}{\eta^2} \geq - \frac{C}{\eta^2}.
\end{equation*}
Thus we have
\begin{eqnarray*}
    G^{ij}P_{ij}& \geq &-\frac{C}{\eta^2}-C +\frac{1}{2}\frac{ G^{ij}(\sum_k h_{ik}<e_k,E_{n+1}>)(\sum_l h_{jl}<e_l,E_{n+1}>)}{<\nu,E_{n+1}>^2 \log W}.
\end{eqnarray*}
The crucial term is estimated as follows:
\begin{eqnarray} 
    &\frac{ G^{ij}(\sum_k h_{ik}<e_k,E_{n+1}>)(\sum_l h_{jl}<e_l,E_{n+1}>)}{<\nu,E_{n+1}>^2 \log W}\nonumber\\
    =& \frac{G^{ij}h_{in}h_{jn}<e_n,E_{n+1}>^2}{<\nu,E_{n+1}>^2 \log W}\nonumber\\
    =& \frac{G^{nn}h^2_{nn}<e_n,E_{n+1}>^2}{<\nu,E_{n+1}>^2 \log W}+ 2 \sum_{i<n} \frac{G^{ni}h_{in}h_{nn}<e_n,E_{n+1}>^2}{<\nu,E_{n+1}>^2 \log W} \nonumber\\
    & +\sum_{i,j<n} \frac{G^{ij}h_{in}h_{jn}<e_n,E_{n+1}>^2}{<\nu,E_{n+1}>^2 \log W} \nonumber\\
    \geq & \frac{G^{nn}h^2_{nn}<e_n,E_{n+1}>^2}{<\nu,E_{n+1}>^2 \log W}+ 2\sum_{i<n} \frac{G^{ni}h_{in}h_{nn}<e_n,E_{n+1}>^2}{<\nu,E_{n+1}>^2 \log W}\label{eq: temp G^ij h_ik h_jl}.
\end{eqnarray}
Recall that $\frac{\eta_n}{\eta} \approx \frac{1}{W\eta}\rightarrow 0$, we have 
\begin{equation}\label{eq:temp h_nn}
    -\frac{h_{nn} <e_n,E_{n+1}>}{<\nu,E_{n+1}> \log W} = - \frac{2 \eta_n}{\eta} -  <e_n,E_{n+1}>\approx - 1, 
\end{equation}
and 
\begin{equation} \label{eq: temp h_ni}
    -\frac{h_{in} <e_n,E_{n+1}>}{<\nu,E_{n+1}> \log W} = - \frac{2 \eta_i}{\eta} = \frac{4<e_i,X>}{\eta}, \quad i<n.
\end{equation}
Thus we have 
\begin{equation}\label{eq:temp G_nn h_nn^2}
    \frac{G^{nn}h^2_{nn}<e_n,E_{n+1}>^2}{<\nu,E_{n+1}>^2 \log W} \geq \frac{1}{2} \log W G^{nn} .
\end{equation}
%(We do not need this claim now.) We claim that ???
%\begin{equation}
   %\sum_{i<n} G^{ni}h_{in} \leq 0. \label{Keygradient}
%end{equation}
%Then we have 
%\begin{equation}
%    \sum_{i<n} \frac{G^{ni}h_{in}h_{nn}<e_n,E_{n+1}>^2}{<\nu,E_{n+1}>^2 \log W} \geq 0.
%\end{equation}
%When $n=3$, inequality \eqref{Keygradient} is true. Due to %\eqref{GF}, we have the following: 
%\begin{equation}
%    V G^{ij} = \cos \Theta  (\delta^{ij}-[T_2]^{ij}) +\sin \Theta [T_1]^{ij}.
%\end{equation}
%Thus 
%\begin{equation}
%    V \sum_{i<n} G^{ni}h_{in} =\cos \Theta (h^2_{32} h_{11}+h^2_{31} h_{22}) - \sin \Theta (h^2_{23} + h^2_{13}).
%\end{equation}
%If $\Theta \geq \frac{\pi}{2}$ ,we have $\cos \Theta \leq 0$. Thus when $n=3$ we have proved \eqref{Keygradient}
%\begin{equation}
%     V \sum_{i<n} G^{ni}h_{in} \leq 0.
%\end{equation}
%(We do not need the above part.)\\
By the definition of $T_k$, for $i<n$, we have
\begin{eqnarray*}
    [T_k]^{ni} &=&  \frac{1}{k!} \delta^{i i_1 i_2 \cdots i_{k-1} i_k }_{nj_1j_2\cdots j_{k-1} j_k}  h_{i_1 j_1} h_{i_2 j_2} \cdots h_{i_{k-1} j_{k-1}} h_{i_k j_k}\\
    &=&  \frac{1}{(k-1)!} \delta^{i i_1 i_2 \cdots i_{k-1} n}_{nj_1j_2\cdots j_{k-1} i}  h_{i_1 j_1} h_{i_2 j_2} \cdots h_{i_{k-1} j_{k-1}} h_{n i}\\
    &=&  \frac{1}{(k-1)!} \delta^{in}_{ni} \sum_{ j_1,j_2,\cdots,j_{k-1} \neq i,n} \delta^{ j_1 j_2 \cdots j_{k-1} }_{j_1j_2\cdots j_{k-1} }  h_{j_1 j_1} h_{j_2 j_2} \cdots h_{j_{k-1} j_{k-1}} h_{n i}\\
     &=&  - h_{in}  \sum_{j_1,j_2,\cdots,j_{k-1} \neq i,n} \frac{1}{(k-1)!}   \delta^{ j_1 j_2 \cdots j_{k-1} }_{j_1j_2\cdots j_{k-1} }  h_{j_1 j_1} h_{j_2 j_2} \cdots h_{j_{k-1} j_{k-1}} \\
    & =& - h_{n i}\sigma_{k-1} (h \rvert n,i),
\end{eqnarray*}
where $h|n,i$ means the vector without $h_{ii}$ and $h_{nn}$.\\ 
And 

\begin{equation*}
    \lvert \sigma_{k-1} (h \rvert n,i)\rvert  \leq C(n) V.
\end{equation*}
Using \eqref{GF}, we have
\begin{equation}\label{eq: temp G^ni h_in}
    \lvert \sum_{i<n} G^{ni} h_{in} \rvert  \leq C(n) \sum_{i<n} \lvert h_{in} \rvert^2.   
\end{equation}
Because $P_i=0$, by (\ref{eq:temp h_nn}) we have
$$
    h_{nn} \approx  <\nu,E_{n+1}> \log W = -\frac{\log W}{W} \leq 0,
$$ and for $i<n$, by (\ref{eq: temp h_ni})
$$
    \lvert h_{in} \rvert \approx \frac{\log W}{\eta W } \le \frac{\log^2 W}{W}.
$$
By (\ref{eq: temp G^ni h_in})  we have 
$$
     \lvert \sum_{i<n} G^{ni} h_{in} \rvert  \leq C\frac{\log^4 W}{W^2}.
$$
Then we have 
\begin{equation}\label{eq: temp G^ni h_in h_nn}
    \sum_{i<n} \frac{G^{ni}h_{in}h_{nn}<e_n,E_{n+1}>^2}{<\nu,E_{n+1}>^2 \log W} \geq -C \frac{\log^4 W}{W}. 
\end{equation}
For some small constant $c=c(n)$, we have
\begin{equation}\label{eq: temp G^nn}
    G^{nn} \geq \frac{1}{1+h^2_{nn}+\sum_{i<n} h^2_{in}} \geq c.
\end{equation}
In all, by (\ref{eq: temp G^ij h_ik h_jl}) (\ref{eq:temp G_nn h_nn^2}) (\ref{eq: temp G^ni h_in h_nn}) (\ref{eq: temp G^nn}) we have 
\begin{eqnarray*}
     G^{ij}P_{ij} &\geq& \frac12\frac{G^{nn}h^2_{nn}<e_n,E_{n+1}>^2}{<\nu,E_{n+1}>^2 \log W}  +\frac12\sum_{i<n} \frac{G^{ni}h_{in}h_{nn}<e_n,E_{n+1}>^2}{<\nu,E_{n+1}>^2 \log W}\\
    & &- \frac{C}{\eta^2} - C\\
    & \geq  &\frac c2 \log W -C\frac{\log^4 W}{W}- \frac{C}{\eta^2} - C .
\end{eqnarray*}
Finally, we get
\begin{equation*}
    0\ge G^{ij}P_{ij} (x_0) \geq \frac{c}{4} \log W (x_0)-\frac{C}{\eta^2(x_0)} -C.
\end{equation*}
Thus for $\forall x \in B_1$, we have 
$$
\eta^2 \log W(x) \leq \eta^2\log W(x_0)\leq C.
$$
\end{proof}

%\acknowl{The authors are grateful to Professor Yu Yuan and Professor Pengfei Guan for their interest in this paper. }

%\bibliog raphystyle{plain}
   %\printbibliography

   %\bibliographystyle{acm}
%\bibliography{refs}

\end{document}